\numberwithin{equation}{section}
\numberwithin{figure}{section}
\theoremstyle{plain}
\newtheorem{thm}{\protect\theoremname}
  \theoremstyle{definition}
  \newtheorem{defn}[thm]{\protect\definitionname}
  \theoremstyle{definition}
  \newtheorem{problem}[thm]{\protect\problemname}
  \theoremstyle{plain}
  \newtheorem{prop}[thm]{\protect\propositionname}
  \theoremstyle{plain}
  \newtheorem{lem}[thm]{\protect\lemmaname}
  \theoremstyle{plain}
  \newtheorem{cor}[thm]{\protect\corollaryname}
  \theoremstyle{remark}
  \newtheorem{rem}[thm]{\protect\remarkname}
  \theoremstyle{definition}
  \newtheorem{example}[thm]{\protect\examplename}
  \theoremstyle{remark}
  \newtheorem{claim}[thm]{\protect\claimname}
  \theoremstyle{plain}
  \newtheorem{fact}[thm]{\protect\factname}
\numberwithin{thm}{section}
  \providecommand{\claimname}{Claim}
  \providecommand{\corollaryname}{Corollary}
  \providecommand{\definitionname}{Definition}
  \providecommand{\examplename}{Example}
  \providecommand{\factname}{Fact}
  \providecommand{\lemmaname}{Lemma}
  \providecommand{\problemname}{Problem}
  \providecommand{\propositionname}{Proposition}
  \providecommand{\remarkname}{Remark}
  \providecommand{\theoremname}{Theorem}
\begin{document}
\begin{onehalfspace}

\title{Borel Canonization of Analytic Sets with Borel Sections}

\author{Ohad Drucker}

\begin{abstract}
Given an analytic equivalence relation, we tend to wonder whether
it is Borel. When it is non Borel, there is always the hope it will
be Borel on a ``large'' set -- nonmeager or of positive measure.
That has led Kanovei, Sabok and Zapletal to ask whether every proper
$\sigma$ ideal satisfies the following property: given $E$ an analytic
equivalence relation with Borel classes, there exists a set $B$ which
is Borel and $I$-positive such that $E\restriction_{B}$ is Borel. We propose a related problem -- does every proper $\sigma$ ideal
satisfy: given $A$ an analytic subset of the plane with Borel sections,
there exists a set $B$ which is Borel and $I$-positive such that
$A\cap(B\times\omega^{\omega})$ is Borel. We answer positively when
a measurable cardinal exists, and negatively in $L$, where no proper
$\sigma$ ideal has that property. Assuming $\omega_{1}$ is inaccessible
to the reals but not Mahlo in $L$, we construct a ccc $\sigma$ ideal
$I$ not having this property -- in fact, forcing with $I$ adds a
non Borel section to a certain analytic set with Borel sections, and
a non Borel class to a certain analytic equivalence relation with
Borel classes. Various counterexamples are given for the case of a $\mathbf{\Delta_{2}^{1}}$
equivalence relation as well as for the case of an improper ideal.
\end{abstract}

\maketitle
\section{Introduction}

\subsection{Borel Canonization of Analytic Equivalence Relations}

Analytic equivalence relations are common in the world of mathematics,
and given such an equivalence relation, one of the first questions
traditionally asked is -- ``is it Borel?''. A negative answer used
to convince us that the equivalence relation is relatively complicated,
but a new point of view proposed by Kanovei, Sabok and Zapletal has
opened the way to a somewhat more optimistic conclusion. We all know
that Lebesgue measurable functions are ``almost continuous'', analytic
sets are Borel modulo meager sets and colorings of natural numbers
are ``almost'' trivial. We can then hope that even the non Borel
analytic equivalence relations are Borel on a substantial set -- which
leads to the following question:

\selectlanguage{american}%
\begin{problem}
\label{general Borel canonization e.r. problem for measure/meager}Given
an analytic equivalence relation $E$ on a Polish space $X$, does
there exist a positive measure (or non-meager, or uncountable) Borel
set $B$ such that $E$ restricted to $B$ is Borel ?
\end{problem}

We can use the notion of a $\sigma$-ideal to state a more general
problem. Given a $\sigma$-ideal $I$ , we will say that $A$ is
an $I$-positive set if $A\notin I$, an $I$-small set if $A\in I$
, and a co-$I$ set if $X-A\in I$. The above mentioned problem
involved the existence of an $I$-positive set for the null ideal,
the meager ideal and the countable ideal. We restate it for all $\sigma$
-ideals:

\begin{problem}
Given an analytic equivalence relation $E$ on a Polish space $X$
and a $\sigma$-ideal $I$ , does there exist an $I$-positive
Borel set $B$ such that $E$ restricted to $B$ is Borel ?
\end{problem}

Unfortunately, that problem has a negative answer, and further assumptions
had to be made -- both on the equivalence relation $E$ and on the
$\sigma$-ideal $I$ (see section \ref{sec:Examples-and-Counterexamples}).
We recall that for a $\sigma$-ideal $I$, $\mathbb{P}_{I}$ is
the partial order of Borel $I$-positive subsets, ordered by inclusion.
We say that $I$ is \textsl{proper} if the associated forcing notion
$\mathbb{P}_{I}$ is proper. Then Kanovei, Sabok and Zapletal have
asked the following:

\begin{problem}
\cite{ksz} \label{Borel canonization e.r.}\textsl{Borel canonization
of analytic equivalence relations with Borel classes:} Given an analytic
equivalence relation $E$ on a Polish space $X$, all of its classes
Borel, and a proper $\sigma$-ideal $I$, does there exist an $I$-positive Borel set $B$ such that $E$ restricted to $B$ is Borel?

They have shown the answer to be positive for two important classes
of analytic equivalence relations with Borel classes: orbit equivalence
relation, and countable equivalence relations (proofs are given
in the next section). The problem in its full generality remained
open.
\end{problem}

\subsection{Borel Canonization of Analytic Sets with Borel Sections}

Let $E$ be an analytic equivalence relation on $X$ Polish, all of
its classes Borel. The equivalence relation $E$ is a subset of $X^{2}$ with Borel sections.
Given $B\subseteq X$ Borel, $E\restriction B$ is Borel is equivalent,
by the very definition, to $E\cap(B\times B)$ being Borel. That simple
observation leads to the following variants of Borel canonization:

\begin{defn}
Let $X$ be Polish, and $I$ a $\sigma$-ideal on $X$. 
\begin{enumerate}
\item We say that $I$ has \textsl{square Borel canonization of analytic
sets with Borel sections }if for any\textsl{ }$A\subseteq X^{2}$
an analytic set with vertical Borel sections, there exists an $I$-positive Borel set $B$ such that $A\cap(B\times B)$ is Borel.
\item We say that $I$ has \textsl{rectangular Borel canonization of analytic
sets with Borel sections }if for any\textsl{ }$A\subseteq X^{2}$
an analytic set with vertical Borel sections, there exists an $I$-positive Borel set $B$ such that $A\cap(B\times X)$ is Borel.
\item We say that $I$ has\textbf{\textsl{ }}\textsl{strong}\textbf{\textsl{
}}\textsl{square Borel canonization of analytic sets with Borel sections
}if for any\textsl{ }$A\subseteq X^{2}$ an analytic set with vertical
Borel sections, there exists a co-$I$ Borel set $B$ such that
$A\cap(B\times B)$ is Borel.
\item We say that $I$ has\textbf{\textsl{ }}\textsl{strong}\textbf{\textsl{
}}\textsl{rectangular Borel canonization of analytic sets with Borel
sections }if for any\textsl{ }$A\subseteq X^{2}$ an analytic set
with vertical Borel sections, there exists a co-$I$ Borel set $B$
such that $A\cap(B\times X)$ is Borel.
\end{enumerate}
\end{defn}

In what follows, we will simply say: ``$I$ has square Borel canonization''
, etc. Rectangular Borel canonization implies square Borel
canonization, which implies Borel canonization of analytic equivalence
relations with Borel classes. We do not know whether any of the inverse implications
are true.

\begin{rem}
\label{remark on ccc and borel canonization}For ccc ideals, the strong
Borel canonization and the weak Borel canonization are equivalent.
The strong Borel canonization of general proper ideals is false -- 
see \cite{ikegami-2nd} proposition 17.
\end{rem}

When considering the square and rectangular Borel canonizations, there
is no difference between analytic and coanalytic sets:

\begin{claim}
$I$ has square Borel canonization of analytic sets with Borel sections
if and only if $I$ has square Borel canonization of coanalytic sets
with Borel sections (and the same for rectangular, strong square and
strong rectangular Borel canonizations).
\end{claim}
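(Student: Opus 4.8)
The plan is to exploit the fact that complementation turns an analytic set with Borel sections into a coanalytic set with Borel sections, and that the canonization property is insensitive to complementation once we have fixed the Borel ``window'' $B\times B$ (or $B\times X$). So the whole equivalence reduces to two elementary observations.

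First I would observe that if $A\subseteq X^{2}$ is analytic with vertical Borel sections, then its complement $A^{c}=X^{2}\setminus A$ is coanalytic, and for each $x$ the section $(A^{c})_{x}=X\setminus A_{x}$ is the complement of a Borel set, hence Borel. Thus $A\mapsto A^{c}$ restricts to a bijection between analytic sets with Borel sections and coanalytic sets with Borel sections. This step is where the Borel-section hypothesis is genuinely used: it is precisely what guarantees that passing to the complement keeps us inside the relevant class of sets (an analytic set with \emph{arbitrary} sections would have a coanalytic complement, but the section condition is what is being preserved here).

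Next comes the key point: for any Borel set $B\subseteq X$, the product $B\times B$ is Borel, and the two sets $A\cap(B\times B)$ and $A^{c}\cap(B\times B)$ partition $B\times B$. Hence $A\cap(B\times B)$ is Borel if and only if $A^{c}\cap(B\times B)$ is Borel, since each is the relative complement of the other inside the Borel set $B\times B$. The same holds verbatim with $B\times X$ in place of $B\times B$. Combining the two observations yields the claim: if $I$ has square Borel canonization for analytic sets and $C$ is coanalytic with Borel sections, then $C^{c}$ is analytic with Borel sections, so there is an $I$-positive Borel $B$ with $C^{c}\cap(B\times B)$ Borel; by the partition remark $C\cap(B\times B)$ is then Borel as well, witnessing square canonization for $C$. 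The converse direction is symmetric, the rectangular case is identical using $B\times X$, and the strong variants require only that $B$ be co-$I$ rather than merely $I$-positive — the same witness $B$ produced for $A^{c}$ serves for $A$, so nothing in the argument changes.

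The main thing to be careful about — rather than a real obstacle — is exactly these two checks: that complementation preserves the Borel-section hypothesis, and that a single witness $B$ works simultaneously for a set and its complement because $B\times B$ (resp.\ $B\times X$) is itself Borel. Everything beyond this is formal, and no genuine difficulty arises.
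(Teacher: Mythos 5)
Your proposal is correct and is exactly the argument the paper intends by its one-line proof ``Consider the complement'': complementation preserves the Borel-section hypothesis, and a set and its complement restricted to the Borel window $B\times B$ (or $B\times X$) are relative complements of each other, so the same witness $B$ works for both. Nothing further is needed.
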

\begin{proof}
Consider the complement.
\end{proof}

Hence in that context, analytic sets and coanalytic sets are basically
the same object.

Albeit being a new notion, strong rectangular Borel canonization has been studied in the past by Fujita in
\cite{fujita} and by Ikegami in \cite{ikegami-1st} and \cite{ikegami-2nd} ,
culminating in the following result:

\begin{thm}
\label{thm:(-Ikegami-)} (Ikegami \cite{ikegami-2nd}) Let $I$ be a Borel
generated $\sigma$-ideal such that $\mathbb{P}_{I}$ is strongly
arboreal, provably ccc and $\mathbf{\Sigma_{1}^{1}}.$ Then the following
are equivalent:
\begin{enumerate}
\item $I$ has strong rectangular Borel canonization.
\item $\mathbf{\Sigma_{2}^{1}}$ sets are measurable with respect to $I_{\text{ }}$,
which is: For $A$ $\mathbf{\Sigma_{2}^{1}}$ there is $B$ Borel
such that $A\triangle B\in I$.
\end{enumerate}
\end{thm}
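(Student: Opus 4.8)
The plan is to route both implications through the genericity statement
\[(\star)\qquad\text{for every real }r,\ N_{r}:=\{x:x\text{ is not }\mathbb{P}_{I}\text{-generic over }L[r]\}\in I,\]
and to close the cycle $(1)\Rightarrow(2)\Rightarrow(\star)\Rightarrow(1)$. Throughout I write $BC$ for the set of Borel codes, $B_{c}$ for the Borel set coded by $c$, and fix the universal $\mathbf{\Sigma_{1}^{1}}$ relation $S$ with $y\in B_{c}\Leftrightarrow S(c,y)$ for $c\in BC$. The structural hypotheses enter as follows: strong arboreality produces a generic real lying inside every condition and makes $(\star)$ meaningful; provable ccc preserves $\omega_{1}$, hence grants Shoenfield $\mathbf{\Pi_{2}^{1}}$-absoluteness between $V$ and $\mathbb{P}_{I}$-extensions; $\mathbf{\Sigma_{1}^{1}}$-ness makes $I$-positivity absolute and realizes $\mathbb{P}_{I}^{L[r]}$ correctly as a suborder of $\mathbb{P}_{I}^{V}$; and Borel-generatedness lets me replace any $I$-small set by a Borel $I$-small superset.

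For $(1)\Rightarrow(2)$ I would argue directly. Write a given $\mathbf{\Sigma_{2}^{1}}$ set as $C=\mathrm{proj}_{X}Q$ with $Q\subseteq X\times\omega^{\omega}$ coanalytic, and apply Kondo--Novikov uniformization to obtain a coanalytic $Q^{*}\subseteq Q$ whose vertical sections are singletons on $C$ and empty off $C$; in particular $Q^{*}$ is a coanalytic set with Borel sections. By the Claim preceding the theorem, strong rectangular canonization holds for coanalytic sets too, so there is a co-$I$ Borel $B$ with $Q^{*}\cap(B\times X)$ Borel. This Borel set has all sections of size $\le 1$, so by Luzin--Novikov (equivalently Luzin--Suslin) its projection $C\cap B$ is Borel. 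Since $X\setminus B\in I$ we get $C\triangle(C\cap B)=C\setminus B\subseteq X\setminus B\in I$, so $C$ is $I$-measurable.

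For $(2)\Rightarrow(\star)$, fix $r$ and note that $N_{r}$ is $\mathbf{\Sigma_{2}^{1}}(r)$. By $(2)$ choose Borel $M$ with $N_{r}\triangle M\in I$, and suppose toward a contradiction that $N_{r}\notin I$; then $M\notin I$, i.e.\ $M$ is a condition. Using Borel-generatedness, cover $M\setminus N_{r}\in I$ by a Borel $I$-small $W$ and set $M':=M\setminus W$, an $I$-positive Borel set with $M'\subseteq N_{r}$. Now force with $\mathbb{P}_{I}$ below $M'$: by strong arboreality the generic real $x_{G}$ lies in $M'$, while being $\mathbb{P}_{I}$-generic over $V\supseteq L[r]$ it is generic over $L[r]$, so $x_{G}\notin N_{r}^{V[G]}$. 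On the other hand ``$M'\subseteq N_{r}$'' is a $\mathbf{\Pi_{2}^{1}}$ statement, hence (as provable ccc preserves $\omega_{1}$) absolute to $V[G]$ by Shoenfield, which forces $x_{G}\in N_{r}^{V[G]}$ --- a contradiction. Thus $N_{r}\in I$.

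The crux is $(\star)\Rightarrow(1)$. Given $A$ analytic with code $r$ and Borel sections, I work in $L[r]$: since ``$A_{\dot g}$ is Borel'' is $\mathbf{\Sigma_{2}^{1}}$ and true in $V$, Shoenfield yields $L[r]\models\Vdash_{\mathbb{P}_{I}}\exists c\,(B_{c}=A_{\dot g})$, so I may fix a name $\dot c\in L[r]$ with $\Vdash B_{\dot c}=A_{\dot g}$. The essential step is the \emph{continuous reading of names}: for a strongly arboreal, Suslin ccc $\mathbb{P}_{I}$ there is a ground-model (hence $L[r]$) Borel function $f$ with $f(x)=\dot c[x]$ for co-$I$ many generic $x$, so that $f(x)\in BC$ and $B_{f(x)}=A_{x}$ there. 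Using $(\star)$ together with Borel-generatedness to absorb the exceptional $I$-small set, I obtain a co-$I$ Borel $B$ on which $f$ is a genuine Borel code for the section, whence $A\cap(B\times X)=\{(x,y)\in B\times X:S(f(x),y)\}$ is simultaneously analytic and coanalytic, i.e.\ Borel. I expect the continuous reading of names to be the main obstacle, and it is exactly here that strong arboreality and provable ccc become indispensable. It is also what forces the detour through $(\star)$ rather than a direct uniformization: the naive relation ``$c$ is a Borel code of $A_{x}$'' is only $\mathbf{\Pi_{2}^{1}}$ --- since ``$B_{c}\subseteq A_{x}$'' with $A$ merely analytic is $\mathbf{\Pi_{2}^{1}}$ --- so Kondo uniformization is unavailable and one cannot select the section-codes definably in $\mathrm{ZFC}$ without the genericity machinery.
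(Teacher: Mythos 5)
Your cycle $(1)\Rightarrow(2)\Rightarrow(\star)\Rightarrow(1)$ is the right architecture --- it is exactly how the paper packages Ikegami's result, with $(\star)$ being clause (2) of Theorem \ref{sigma_1_2_and_L_generics} and the step $(\star)\Rightarrow(1)$ being Lemma \ref{main_lemma_ikegami}; your $(1)\Rightarrow(2)$ via Kond\^o uniformization, the Claim on complements, and Luzin--Suslin applied to the Borel set with singleton sections is correct. The step $(2)\Rightarrow(\star)$, however, is broken. Since $N_{r}$ is $\mathbf{\Sigma_{2}^{1}}(r)$, the statement ``$M'\subseteq N_{r}$'' is $\forall x\,(x\in M'\rightarrow x\in N_{r})$, a universal quantifier over a $\mathbf{\Sigma_{2}^{1}}$ matrix, hence $\mathbf{\Pi_{3}^{1}}$ and not $\mathbf{\Pi_{2}^{1}}$; Shoenfield does not apply. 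The failure is not repairable by a better complexity count: your argument uses only the measurability of the single set $N_{r}$ and would therefore prove ``$N_{r}$ measurable $\Rightarrow N_{r}\in I$''. That implication is false --- in $L$, for the null or meager ideal (which satisfy all the hypotheses), every real lies in an $I$-small Borel set coded in $L=L[r]$, so $N_{r}=\mathbb{R}$ is Borel, hence measurable, yet $I$-positive. In that very situation $M'=\mathbb{R}$, the generic real lands outside $N_{r}^{V[G]}$, and ``$M'\subseteq N_{r}$'' visibly fails to persist upward. The correct proof of this direction (Solovay, Brendle--L\"owe, Ikegami) must invoke the measurability of auxiliary $\mathbf{\Sigma_{2}^{1}}$ sets manufactured from the $L[r]$-enumeration of the small Borel sets, not merely of $N_{r}$ itself.

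There is a second gap in $(\star)\Rightarrow(1)$. You obtain the name $\dot c\in L[r]$ from ``\,`$A_{\dot g}$ is Borel' is $\mathbf{\Sigma_{2}^{1}}$ and true in $V$'', but by Proposition \ref{definability_of_rank} that statement is $\Sigma_{3}^{1}$, and the closing proposition of Section 5 shows this is essentially optimal; moreover Theorem \ref{cex_rectangular_borel_canonization_adding_nonborel_section} exhibits a ccc ideal that forces a section to become non-Borel, so ``$\Vdash A_{\dot g}\ Borel$'' cannot be transferred to $L[r]$ by soft absoluteness. Even granting the Borel $f$ from continuous reading of names, the transfer of $B_{f(x)}=A_{x}$ from $L[r][x]$ to $V$ splits into $A_{x}\subseteq B_{f(x)}$ (which is $\Pi_{1}^{1}$ and unproblematic) and $B_{f(x)}\subseteq A_{x}$ (which is $\Pi_{2}^{1}$), and upward $\mathbf{\Pi_{2}^{1}}$ absoluteness from the countable model $L[r][x]$ is precisely the obstruction the paper isolates at the start of Section 3.1. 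Closing this hole is the actual content of Lemma \ref{main_lemma_ikegami}, whose proof works with the rank $\delta$ and boundedness computed over $L[r]$ rather than with a forced Borel code; as written, your sketch assumes the hard part.
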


We say that $I$ is \textit{Borel generated} if any $A\in I$ is contained
in an $I$-small Borel set. We say that $I$ is \textit{provably
ccc} if $ZFC$ proves that $I$ is ccc. The notions of ``strongly
arboreal'' and ``$\mathbf{\Sigma_{1}^{1}}$ forcing'' will be defined
in the following section. For now, we will only say these are assumptions
on the presentability and definability of $\mathbb{P}_{I}$, satisfied
by, for example, the meager ideal and the null ideal. Hence, one learns
from the theorem that the meager ideal has strong rectangular Borel
canonization if and only if $\mathbf{\Sigma_{2}^{1}}$ sets have the
Baire property, and the null ideal has strong rectangular Borel canonization
if and only if $\mathbf{\Sigma_{2}^{1}}$ sets are Lebesgue measurable.

This paper will focus on general $\sigma$-ideals with minimal assumptions
on definability and presentability. It is therefore interesting and
illuminating to compare our results with Ikegami's results.

\subsection{The results of this paper}

The problem of Kanovei, Sabok and Zapletal can be restated as:

\begin{problem}
Do all proper ideals $I$ have Borel canonization of analytic equivalence
relation with Borel classes?
\end{problem}

We focus our paper at the following related problem:

\begin{problem}
\label{main_problem}Do all proper ideals $I$ have rectangular Borel
canonization of analytic sets with Borel sections?
\end{problem}

Section 2 reviews definitions and facts which we use in this paper,
and elaborates on previous results about Borel canonization.

In section 3 we define a notion of $\omega_{1}$-rank for analytic
sets with Borel sections. We use the rank to prove:

\begin{thm}
Assume a measurable cardinal exists. Then proper ideals have rectangular Borel
canonization and ccc ideals have strong rectangular Borel canonization.
\end{thm}

We say that \textit{$\omega_{1}$ is inaccessible to the reals} if
for every $z$ real, $\omega_{1}^{L[z]}<\omega_{1}$.

\begin{thm}
Assume $\omega_{1}$ is inaccessible to the reals, and $I$ is ccc
in $L[z]$ for any real $z$. Then $I$ has strong rectangular Borel canonization
of analytic sets all of whose sections are $\mathbf{\Pi_{\gamma}^{0}}$ for some
$\gamma<\omega_{1}$.
\end{thm}

Section 4 presents examples and counterexamples, mainly demonstrating
the necessity of assuming the properness of $I$ in problem \ref{Borel canonization e.r.}.
For example:

\begin{prop}
\label{I generated by thin e.r.}Let $E$ be analytic with uncountably
many classes but not perfectly many. Let $I_{E}$ be the $\sigma$-ideal generated by the equivalence classes. Then for any $B$ Borel
$I_{E}$-positive, $E\restriction_{B}$ is non Borel.
\end{prop}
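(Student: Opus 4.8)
The plan is to argue by contradiction, reducing everything to Silver's dichotomy. First I would unwind what $I_E$-positivity means for a Borel set. Since $I_E$ is the $\sigma$-ideal generated by the $E$-classes, a set lies in $I_E$ exactly when it is contained in a countable union of classes, i.e. when it meets only countably many $E$-classes. Hence a Borel set $B$ is $I_E$-positive precisely when $B$ meets uncountably many $E$-classes.

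Now suppose toward a contradiction that some Borel $I_E$-positive $B$ has $E\restriction_{B}=E\cap(B\times B)$ Borel. Then $E\restriction_{B}$ is a Borel equivalence relation on the standard Borel space $B$, and by the previous observation it has uncountably many classes. The crux of the argument is to apply Silver's dichotomy to this restricted relation: every $\mathbf{\Pi_{1}^{1}}$ (in particular, every Borel) equivalence relation has either at most countably many classes or a perfect set of pairwise inequivalent elements. Since $E\restriction_{B}$ has uncountably many classes, the first alternative is impossible, so there is a perfect set $P\subseteq B$ whose elements are pairwise $E\restriction_{B}$-inequivalent.

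Finally, because $P\subseteq B$ and $E\restriction_{B}$ agrees with $E$ on $B\times B$, the points of $P$ are pairwise $E$-inequivalent as elements of the original space. Thus $P$ witnesses that $E$ has perfectly many classes, contradicting the hypothesis. Hence no such $B$ exists, which is exactly the assertion that $E\restriction_{B}$ is non Borel for every Borel $I_E$-positive $B$. I do not expect a genuine obstacle here; the entire content is the recognition that Borelness of $E\restriction_{B}$ is the precise hypothesis that lets Silver's theorem fire, and that the ``thinness'' of $E$ (no perfect set of inequivalent points) is exactly the negation of the conclusion Silver produces. The only point to state carefully is that $I_E$-positivity of a Borel set coincides with meeting uncountably many classes, so that the restricted relation genuinely has uncountably many classes and the countable alternative in the dichotomy is ruled out.
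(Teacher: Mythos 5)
Your argument is correct and is essentially identical to the paper's own proof: both observe that an $I_E$-positive Borel set must meet uncountably many classes and then apply Silver's dichotomy to the (assumed Borel) restriction to produce a perfect set of pairwise inequivalent points, contradicting the thinness of $E$. No issues.
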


In \cite{my-psp}, we show that $I_{E}$ as above is never proper,
hence that proposition does not provide a negative answer to the problem
of Borel canonization (problem \ref{Borel canonization e.r.}).

Extending our discussion to $\mathbf{\Delta_{2}^{1}}$ sets, we remark
that $L$ demonstrates a strong form of failure of Borel canonization:

\begin{thm}
\cite{chan}
In $L$, $\sigma$-ideals do not have Borel canonization of $\mathbf{\Delta_{2}^{1}}$
equivalence relations with Borel classes.
\end{thm}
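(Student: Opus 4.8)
The plan is to reduce the statement to the construction of a single equivalence relation and then build it by diagonalization along the constructible wellordering. First I would record the reduction. If some \emph{uncountable} Borel set $B$ had $E\restriction B$ Borel, then the ideal of countable sets would already witness Borel canonization of $E$; conversely, a proper $\sigma$-ideal contains all countable sets (so that $\mathbb{P}_{I}$ is atomless), hence each of its positive Borel sets is uncountable. It therefore suffices to produce, in $L$, a single $\mathbf{\Delta_{2}^{1}}$ equivalence relation $E$ on $2^{\omega}$ all of whose classes are Borel such that $E\restriction B$ is non-Borel for \emph{every} uncountable Borel set $B$. Such an $E$ defeats canonization simultaneously for every $\sigma$-ideal whose positive Borel sets are uncountable, in particular for every proper $\sigma$-ideal.

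To build $E$ I would use the canonical $\mathbf{\Sigma_{2}^{1}}$-good wellordering $<_{L}$ of the reals, which in $L$ has order type $\omega_{1}$. I would construct $E$ as a matching, i.e.\ with all classes of size at most two, so that the classes are automatically finite and hence Borel; the content lies entirely in the global non-Borelness. (Observe that $E$ must be strictly $\mathbf{\Delta_{2}^{1}}$ rather than $\mathbf{\Sigma_{1}^{1}}$, since by Lusin--Novikov an analytic equivalence relation with countable classes is Borel.) Enumerate along $<_{L}$, in order type $\omega_{1}$, all pairs $(B_{\alpha},C_{\alpha})$ where $B_{\alpha}$ codes an uncountable Borel set and $C_{\alpha}$ codes a binary relation; there are $\aleph_{1}$ such pairs in $L$, matching the length of the recursion. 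At stage $\alpha$ I would choose the $<_{L}$-least pair $u,v$ of as-yet-uncommitted reals lying in $B_{\alpha}$ and decide their $E$-status to be the opposite of what $C_{\alpha}$ dictates: pair them if $C_{\alpha}$ separates them, and keep them in distinct singletons otherwise. Since a real, once committed, is never revisited, this disagreement at $(u,v)$ persists, so $E\restriction B_{\alpha}$ differs from the relation coded by $C_{\alpha}$. As every Borel subset of a square is coded by some $C_{\alpha}$, the final $E\restriction B$ is non-Borel for every uncountable Borel $B$. The required uncommitted reals always exist because at a stage $\alpha<\omega_{1}$ fewer than $\aleph_{0}\cdot|\alpha|\le\aleph_{0}$ reals are committed, whereas $|B_{\alpha}|=2^{\aleph_{0}}=\aleph_{1}$ in $L$.

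The step I expect to be the main obstacle is verifying that $E$ stays $\mathbf{\Delta_{2}^{1}}$ across an $\omega_{1}$-length recursion; the combinatorics of the diagonalization are routine, but the definability is not. Concretely, $x\mathrel{E}y$ holds iff $x=y$ or $x,y$ are paired at the stage treating some $(B_{\alpha},C_{\alpha})$, and I must show that the predicate ``$(u,v)$ is the pair selected at the stage handling $(B,C)$'' is $\mathbf{\Delta_{2}^{1}}$ uniformly. I would handle this with the standard fine-structure and absoluteness package: each stage's data lies in a countable level $L_{\beta}$ that is correctly identified, the $<_{L}$-least and uncommittedness conditions are $\mathbf{\Delta_{2}^{1}}$ in the parameters computed so far, and $\mathbf{\Sigma_{2}^{1}}$-absoluteness (via condensation) then yields that both $E$ and its complement are $\mathbf{\Sigma_{2}^{1}}$, whence $E\in\mathbf{\Delta_{2}^{1}}$. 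Granting this, $E$ is a $\mathbf{\Delta_{2}^{1}}$ equivalence relation with Borel (indeed size $\le 2$) classes that is non-Borel on every uncountable Borel set, so no proper $\sigma$-ideal canonizes it, which is the desired failure in $L$.
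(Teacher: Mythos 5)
Your overall strategy---build, in $L$, a single $\mathbf{\Delta_{2}^{1}}$ equivalence relation with finite classes that is non-Borel on every uncountable Borel set, by diagonalizing along $<_{L}$ against all pairs (uncountable Borel set, Borel relation)---is viable, and the combinatorial part is fine. The genuine gap is exactly where you place it, but your sketch of how to close it does not work: the relation you build is $\mathbf{\Sigma_{2}^{1}}$, while the argument that its \emph{complement} is also $\mathbf{\Sigma_{2}^{1}}$ fails. The $\mathbf{\Sigma_{2}^{1}}$ side is the usual one: $x\mathrel{E}y$ (for $x\neq y$) iff some countable well-founded model of $V{=}L$ plus enough $ZF$ sees the recursion pair $x$ with $y$, and by condensation any such model computes a correct initial segment of the recursion. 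But $\neg(x\mathrel{E}y)$ requires certifying that $x$ and $y$ are \emph{never} paired, and in your construction the stage at which a real gets committed is not bounded by anything computable from that real: a real of low $L$-rank can be selected as the $<_{L}$-least uncommitted member of some $B_{\alpha}$ whose code appears arbitrarily late in $<_{L}$. Hence no countable level $L_{\gamma}$ determined by $x$ and $y$ decides their fate, and ``never committed'' is only visibly $\mathbf{\Pi_{2}^{1}}$. The standard repair is to build locality into the recursion: at stage $\alpha$ choose the witnesses from $B_{\alpha}\setminus L_{\gamma_{\alpha}}$, where $L_{\gamma_{\alpha}}$ is a countable level containing the entire history together with the stage-$\alpha$ data (possible since $B_{\alpha}$ is uncountable); then any real committed at stage $\alpha$ has $L$-rank above $\gamma_{\alpha}$, the set of stages that can touch a given real is decided inside the least suitable level containing it, and $\neg E$ becomes $\mathbf{\Sigma_{2}^{1}}$ as well. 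Without some such device your argument only yields a $\mathbf{\Sigma_{2}^{1}}$ counterexample, which is weaker than the stated theorem.

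For comparison, the paper avoids the recursion entirely: it takes the explicitly defined relation $x\mathrel{E}y\Leftrightarrow(\forall\alpha\ admissible)(x\in L_{\alpha}\Leftrightarrow y\in L_{\alpha})$, whose $\mathbf{\Delta_{2}^{1}}$-ness is immediate because both it and its negation are decided correctly by every countable model containing $x$ and $y$; it then shows every perfect set contains two equivalent points via a fine-structural fact about the first admissible level producing a new real, and derives non-Borelness on every uncountable Borel set from Silver's dichotomy exactly as in Proposition \ref{I generated by thin e.r.}. That route gets the definability for free at the price of invoking Silver; yours gets a self-contained non-Borelness argument at the price of the definability work described above. (A small point common to both write-ups: the conclusion needs the ideal's positive Borel sets to be uncountable---e.g.\ that $I$ contains the countable sets---since an atom of $\mathbb{P}_{I}$ trivially canonizes everything.)
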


The last section presents counterexamples to rectangular Borel canonization,
both in $L$ and in much larger universes:

\begin{prop}
In $L$, proper ideals do not have rectangular Borel canonization
of analytic sets with Borel sections. The same is true for $L[z]$
where $z$ is a real. 
\end{prop}

\begin{thm}
If $\omega_{1}$ is inaccessible to the reals and is not Mahlo in
$L$, then there is a ccc ideal not having rectangular Borel canonization
of analytic sets with Borel sections. Moreover, $\mathbb{P}_{I}\Vdash A_{x_{G}}\ non\ Borel$
for some $A$ analytic with Borel sections.
\end{thm}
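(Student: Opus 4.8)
The plan is to construct a single analytic set $A\subseteq(\omega^{\omega})^{2}$ together with a ccc $\sigma$-ideal $I$ for which $\mathbb{P}_{I}\Vdash A_{x_{G}}\ non\ Borel$; the failure of rectangular canonization is then automatic. Indeed, if $I$ had rectangular canonization there would be an $I$-positive Borel $B$ with $A\cap(B\times\omega^{\omega})$ Borel. Forcing below $B$ we may assume $B$ lies in the generic filter, so $x_{G}\in B$, and then $A_{x_{G}}=\bigl(A\cap(B\times\omega^{\omega})\bigr)_{x_{G}}$ is the vertical section of a ground model Borel set, hence Borel in $V[G]$ by absoluteness of Borel codes --- contradicting the displayed forcing statement. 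By the Claim identifying the analytic and coanalytic cases it suffices to produce a $\mathbf{\Pi_{1}^{1}}$ set with Borel sections whose generic section is non-Borel and then pass to its complement, so I will build the (co)analytic object in whichever form is convenient.

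The set $A$ will be designed so that the Borel rank of the section $A_{x}$ tracks $\omega_{1}^{L[x]}$. Using that $\omega_{1}$ is inaccessible to the reals, for every $x\in\omega^{\omega}$ one has $\omega_{1}^{L[x]}<\omega_{1}$, so a section whose complexity is bounded by $\omega_{1}^{L[x]}$ is a genuinely Borel set of countable rank; this is what makes $A$ an analytic set with Borel sections in $V$. Making $A$ \emph{analytic} --- rather than merely $\mathbf{\Sigma_{2}^{1}}$, which is the naive complexity of any predicate mentioning $\omega_{1}^{L[x]}$ --- is the first place the hypothesis ``$\omega_{1}$ not Mahlo in $L$'' enters: since $\omega_{1}$ is inaccessible but not Mahlo in $L$ there is an $L$-definable club $C\subseteq\omega_{1}$ consisting of ordinals that are \emph{not} inaccessible in $L$, together with a canonical $L$-sequence $\langle g_{\alpha}:\alpha\in C\rangle$ of witnesses to non-inaccessibility (cofinal maps and collapsing surjections). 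Coding $A$ off this lightface-$L$ data replaces the unbounded quantifier over $L[x]$ by a $\mathbf{\Pi_{1}^{1}}$ predicate with a real parameter in $L$, yielding the required analytic set.

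Take $I$ to be the $\sigma$-ideal generated by the Borel sets on which the rank function $x\mapsto\omega_{1}^{L[x]}$ is bounded below $\omega_{1}$. Since a supremum of countably many countable ordinals is countable this is a $\sigma$-ideal, and it is proper as an ideal because coding an ordinal $\alpha<\omega_{1}$ into a real produces $x$ with $\omega_{1}^{L[x]}>\alpha$, so the rank is unbounded and $X\notin I$. For each ground model $\gamma<\omega_{1}$ the set $\{x:\omega_{1}^{L[x]}>\gamma\}$ is co-$I$, hence dense, so below any condition it is dense to force $\omega_{1}^{L[x_{G}]}>\gamma$; as $\mathbb{P}_{I}$ will be ccc these $\gamma$ remain cofinal in $\omega_{1}^{V[G]}=\omega_{1}^{V}$, and therefore $\omega_{1}^{L[x_{G}]}=\omega_{1}$ in $V[G]$. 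The section $A_{x_{G}}$ then has no countable Borel rank and so is non-Borel, which is the ``Moreover''; the non-Borelness is made precise through the $\omega_{1}$-rank of Section 3.

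The main obstacle is proving that $\mathbb{P}_{I}$ is ccc, and this is exactly where the full strength of ``$\omega_{1}$ not Mahlo in $L$'' is needed. Pushing $\omega_{1}^{L[x_{G}]}$ up to $\omega_{1}$ is a collapsing-type phenomenon in tension with ccc, and the club $C$ of $L$-non-inaccessibles is what dissolves the tension: the witnesses $g_{\alpha}$ let the collapse be carried by a $\sigma$-centered almost-disjoint coding built along $C$, so that any putative uncountable antichain can be refined, using $C$ and the $g_{\alpha}$, to a countable one. Conversely, were $\omega_{1}$ Mahlo in $L$ the stationarily many $L$-inaccessibles below $\omega_{1}$ could not be collapsed by any ccc forcing without collapsing $\omega_{1}$ itself, and an uncountable antichain would reappear; this is the same boundary that, strengthened to a measurable, yields the positive canonization theorem. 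Finally I will note that $I$ is ccc only under the present hypotheses --- it is not \emph{provably} ccc --- so there is no conflict with Ikegami's theorem, whose equivalence requires provable ccc-ness.
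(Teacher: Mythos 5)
Your overall architecture is right --- exhibit an analytic set with Borel sections and a ccc ideal whose generic section is non-Borel, and note that this kills rectangular canonization --- and your opening reduction matches the paper's. But there are two genuine gaps. First, the heart of the theorem is the ccc-ness of the ideal, and your proof of it is a gesture, not an argument. You define $I$ intrinsically (generated by Borel sets on which $x\mapsto\omega_{1}^{L[x]}$ is bounded) and then claim an uncountable antichain ``can be refined, using $C$ and the $g_{\alpha}$, to a countable one''; nothing in the proposal substantiates this, and it is far from clear that this particular $\mathbb{P}_{I}$ is ccc. The paper sidesteps the issue entirely: it quotes Bagaria--Friedman for the existence of a ccc \emph{forcing} $\mathbb{P}$ adding $x$ with $\omega_{1}^{L[x]}=\omega_{1}$ (this is where ``not Mahlo in $L$'' is used, and it is the only place), and then proves a general transfer proposition: setting $B\in I\Leftrightarrow\mathbb{P}\Vdash\tau\notin B$ yields a ccc ideal, because representatives of an uncountable antichain in $\mathbb{P}_{I}$ pull back to an uncountable antichain in $\mathbb{P}$. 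Note that the paper's derived ideal properly contains yours (every bounded-rank Borel set is avoided by $\tau$ by Shoenfield absoluteness, but a $\mathbb{P}$-null Borel set of unbounded rank is $J$-small without being $I$-small), so your $\mathbb{P}_{I}$ has strictly more conditions and ccc-ness of the Bagaria--Friedman forcing does not transfer to it.

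Second, your account of how $A$ gets to be analytic is a misconception. You claim ``not Mahlo in $L$'' supplies an $L$-definable club $C\subseteq\omega_{1}$ of non-inaccessibles that can be packaged as ``a real parameter in $L$''; a club in $\omega_{1}$ is uncountable and cannot be coded by a real. In fact no Mahlo hypothesis is needed here: the paper takes a uniformly $\Pi_{1}^{1}$ formula $\Psi(x,y)$ defining, for each $x$, a subset of $L[x]$ of size $\omega_{1}^{L[x]}$ with no perfect subset (the relativized largest thin $\Pi_{1}^{1}$ set), which exists by standard effective descriptive set theory; inaccessibility of $\omega_{1}$ to the reals then makes every section countable, hence Borel, in the ground model. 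A smaller point: your density argument that $\omega_{1}^{L[x_{G}]}>\gamma$ for each ground-model $\gamma$ needs the sets $\{x:\omega_{1}^{L[x]}\leq\gamma\}$ to be covered by $I$-small Borel sets, which you assert but do not check; in the paper's setup this is unnecessary because $\omega_{1}^{L[x_{G}]}=\omega_{1}$ is inherited directly from the Bagaria--Friedman extension, since $\mathbb{V}[x]$ \emph{is} the $\mathbb{P}_{I}$-extension.
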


\begin{cor}
Rectangular Borel canonization for ccc ideals implies that $\omega_1$ is inaccessible to the reals and Mahlo in L.
\end{cor}

Non absoluteness of ``all sections / classes are Borel'' is further
demonstrated by the following proposition:

\begin{prop}
There is an analytic equivalence relation $E$ such that:
\begin{enumerate}
\item If $\omega_{1}$ is inaccessible to the reals and is not Mahlo in
$L$, then all $E$ classes are Borel and there is a ccc ideal $I$
such that
\[
\mathbb{P}_{I}\Vdash[x_{G}]\ is\ non\ Borel.
\]
\item If $\omega_{1}$ is inaccessible to the reals, then all $E$ classes
are Borel, while in $L$ there is a non Borel class.
\end{enumerate}
\end{prop}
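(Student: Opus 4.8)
The plan is to reduce the proposition to the construction underlying the ccc counterexample theorem above, reading the counterexample \emph{set} as an equivalence relation. Recall from that theorem a fixed lightface-analytic set $A\subseteq\omega^{\omega}\times\omega^{\omega}$, built from the constructibility hierarchy together with the $\omega_{1}$-rank of Section 3, whose section $A_{x}$ is Borel exactly when $\omega_{1}^{L[x]}<\omega_{1}$. From $A$ I would define the analytic equivalence relation $E$ on $\omega^{\omega}\times\omega^{\omega}$ by the canonical fiberwise identification
\[
(x,y)\mathrel{E}(x',y')\iff (x,y)=(x',y')\ \text{ or }\ \bigl(x=x'\ \wedge\ y\in A_{x}\ \wedge\ y'\in A_{x}\bigr),
\]
so that the nontrivial classes of $E$ are precisely the fibers $\{x\}\times A_{x}$ and all remaining classes are singletons. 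Then $E$ is analytic (it is the union of the diagonal with the analytic set $\{((x,y),(x,y')):(x,y)\in A\wedge(x,y')\in A\}$), and a fiber $\{x\}\times A_{x}$ is Borel if and only if the section $A_{x}$ is Borel, while singleton classes are always Borel. Thus ``all $E$-classes are Borel'' is equivalent to ``$A$ has Borel sections'', and the Borel-ness of the class over $x$ is governed by whether $\omega_{1}^{L[x]}<\omega_{1}$.

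For part (1), assume $\omega_{1}$ is inaccessible to the reals. Then $\omega_{1}^{L[x]}<\omega_{1}$ for every real $x$, so every section $A_{x}$ is Borel and hence every $E$-class is Borel. Assuming in addition that $\omega_{1}$ is not Mahlo in $L$, the ccc counterexample theorem above supplies a ccc ideal $I$ with $\mathbb{P}_{I}\Vdash A_{x_{G}}\ \text{non Borel}$. Transporting this through the fiberwise identification, the class over the generic real $x_{G}$ is $\{x_{G}\}\times A_{x_{G}}$, which is non-Borel in $V[G]$ precisely because $A_{x_{G}}$ is; concretely, the ccc forcing adds a real $x_{G}$ with $\omega_{1}^{L[x_{G}]}=\omega_{1}$ while preserving $\omega_{1}$, so inaccessibility to the reals fails for $x_{G}$ in the extension and its class loses Borel-ness. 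This yields $\mathbb{P}_{I}\Vdash[x_{G}]\ \text{non Borel}$.

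For part (2), the implication ``inaccessible to the reals $\Rightarrow$ all $E$-classes Borel'' is exactly the computation just made. For the second half I would pass to $L$: there $L[x]=L$, hence $\omega_{1}^{L[x]}=\omega_{1}$ for every real $x$, so the dichotomy fails and the construction's distinguished fiber $\{x_{0}\}\times A_{x_{0}}$ -- for a suitable $x_{0}$ whose section is the full unbounded set -- is a non-Borel class. Consequently the single definable relation $E$ has all classes Borel in any model where $\omega_{1}$ is inaccessible to the reals, yet a non-Borel class in $L$, which is the advertised non-absoluteness of ``all classes are Borel''.

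The main obstacle lies not in this reduction but is inherited from the ccc counterexample theorem: producing a genuinely $\mathbf{\Sigma_{1}^{1}}$ (rather than merely $\mathbf{\Sigma_{2}^{1}}$) set $A$ whose sections obey the clean dichotomy ``Borel $\iff\omega_{1}^{L[x]}<\omega_{1}$'', which is where the $\omega_{1}$-rank of Section 3 and the fine structure of the $L$-hierarchy do the real work. The remaining points are routine: checking that the fiberwise $E$ is analytic with Borel classes, that the equivalence ``class Borel $\iff$ section Borel'' survives forcing, and that the generic real's class is the one rendered non-Borel, so that the forcing statement about sections transfers verbatim to a statement about classes.
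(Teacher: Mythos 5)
Your proposal is correct and follows essentially the same route as the paper: the paper defines $E$ on pairs by declaring $(x_1,y_1)E(x_2,y_2)$ iff $x_1=x_2$ and either $y_1=y_2$ or both $y_i$ satisfy $\neg\Psi(x_1,\cdot)$, where $\Psi$ is the uniform $\Pi^1_1$ thin-set formula from the ccc counterexample theorem, so the nontrivial classes are exactly your fibers $\{x\}\times A_x$ with $A$ the analytic complement of $\Psi$, Borel iff $\omega_1^{L[x]}<\omega_1$. One small correction: the set is built only from the uniform thin $\Pi^1_1$ subset of $L[x]$ (you must indeed take its analytic complement to make $E$ analytic, as you do), and the $\omega_1$-rank of Section 3 plays no role in that construction.
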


The problem of square Borel canonization is sometimes discussed in
this paper but the consistency of a negative answer remains open.
The same applies for the problem of Borel canonization of equivalence
relations.

Chan, in \cite{chan}, has independently obtained much of the above
results using similar techniques. He has been working with equivalence
relations, but his proofs perfectly fit in the context of rectangular
Borel canonization. In particular, he has shown that all proper ideals have rectangular
Borel canonization if there exist sharps
for all reals and for a few more sets associated with the forcing notions
of proper ideals.

\subsection{Acknowledgments}

This research was carried out under the supervision of Menachem Magidor,
and would not be possible without his elegant ideas and deep insights.
The author would like to thank him for his dedicated help. The author
would also like to thank Marcin Sabok for introducing him with the problem of Borel canonization and for hours of helpful discussions about the subject,
and to thank William Chan for sharing and discussing his results and thoughts,
and for reading the first draft of this paper.

\section{Preliminaries}

\subsection{Forcing with Ideals}

The basics of forcing can be found in \cite{jech}. We remind that
a forcing notion satisfies the \textit{countable chain condition},
or is \textit{ccc}, when every antichain is countable. A wider class
of forcing notions is the proper ones:

\begin{defn}
Let $M$ be an elementary submodel of $H_{\theta},$ and $\mathbb{P}$
a forcing notion. A condition $q\in\mathbb{P}$ is a \textit{master
condition }over $M$ if for every $D\subseteq\mathbb{P}$ dense such
that $D\in M$, $q\Vdash\dot{G}\cap\check{D}\cap\check{M}\neq\emptyset.$
A forcing notion $\mathbb{P}$ is \textit{proper} if for every $\theta$
large enough, every countable elementary submodel $M$ of $H_{\theta}$
such that $\mathbb{P}\in M$, and every $p\in\mathbb{P}\cap M$, there
is a $q\leq p$ which is a master condition over $M$.
\end{defn}
\selectlanguage{english}%

Proper forcing notions preserve $\omega_{1}$, but there are $\omega_{1}$-preserving forcing notions which are not proper.

The subject of forcing with ideals was thoroughly investigated by
Zapletal in \foreignlanguage{american}{\cite{forcing-idealized}. We review here
some of the most important notions and facts.}

\selectlanguage{american}%
Recall that for $I$ a $\sigma$-ideal on a Polish space $X$, $\mathbb{P}_{I}$
is the partial order of Borel $I$-positive sets ordered by inclusion.

\begin{prop}
The poset $\mathbb{P}_{I}$ adds an element $\dot{x_{G}}$ of the
Polish space $X$ such that for every Borel set $B\subseteq X$ coded
in the ground model, $B\in G\Leftrightarrow\dot{x_{G}}\in B$.
\end{prop}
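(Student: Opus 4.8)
The plan is to define $x_{G}$ as the unique point pinned down by $G$ through a fusion argument, and then to verify the biconditional by induction on Borel complexity.

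First I would produce, in $V[G]$, a canonical point. For each $k$ the set $E_{k}=\{C\in\mathbb{P}_{I}:\mathrm{diam}(C)<1/k\}$ is dense: given a condition $C$, cover $X$ by countably many basic open sets of diameter $<1/k$, write $C$ as the countable union of its intersections with them, and use that $I$ is a $\sigma$-ideal to find one piece that is still $I$-positive. Refining this, I would build inside $G$ a decreasing sequence of conditions $C_{0}\supseteq C_{1}\supseteq\cdots$ together with basic open sets $V_{k}\supseteq C_{k}$ satisfying $\overline{V_{k+1}}\subseteq V_{k}$ and $\mathrm{diam}(V_{k})\to 0$; the extra demand $\overline{V_{k+1}}\subseteq V_{k}$ is again arranged by a density argument, covering $C_{k}$ by basic open sets whose closures sit inside $V_{k}$. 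Since ``$C$ is nonempty'' is $\Sigma^{1}_{1}$ and ``$C_{k+1}\subseteq C_{k}$'' is $\Pi^{1}_{1}$, these facts pass to $V[G]$ by Shoenfield absoluteness, so in $V[G]$ the reinterpreted sets $\overline{C_{k}}$ form a decreasing sequence of nonempty closed sets of vanishing diameter; by completeness of $X$ their intersection is a single point, which I call $x_{G}$. A short compatibility argument (any two conditions of $G$ have a common lower bound in $G$) shows $x_{G}$ does not depend on the chosen fusion sequence, so it is named by a single $\mathbb{P}_{I}$-name $\dot{x_{G}}$.

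The heart of the proof is the following pair of statements, for every Borel $B$ coded in $V$: if $C\subseteq B$ then $C\Vdash\dot{x_{G}}\in B$, and if $C\cap B=\emptyset$ then $C\Vdash\dot{x_{G}}\notin B$. Granting these, the biconditional follows quickly. For every Borel $B$ the set $D_{B}=\{C:C\subseteq B\text{ or }C\cap B=\emptyset\}$ is dense, by splitting a condition along $B$ and keeping an $I$-positive half; hence $G$ meets $D_{B}$. If $B\in G$ then $B$ itself is a condition below which we are inside $B$, so $x_{G}\in B$; conversely, if $x_{G}\in B$ the disjoint alternative is impossible, so some $C\in G$ has $C\subseteq B$, forcing $B\in G$ when $B$ is $I$-positive and contradicting $C\notin I$ when $B\in I$ (so that case cannot arise). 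Thus $B\in G\iff x_{G}\in B$.

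It remains to prove the two statements, which I would do by induction on the Borel rank of $B$. The delicate case, and the one I expect to be the main obstacle, is the base case of an open set: if $C\subseteq U$ with $U$ open, the naive estimate only gives $x_{G}\in\overline{C}\subseteq\overline{U}$, which may miss $U$ along its boundary. This is exactly why the fusion sequence is built with $\overline{V_{k+1}}\subseteq V_{k}$ and with the first step chosen so that $\overline{V_{0}}\subseteq U$; then $x_{G}\in\overline{C_{1}}\subseteq V_{0}\subseteq U$. The disjoint case for open $U$ is immediate, since $C\subseteq X\setminus U$ is closed and the sets $\overline{C_{k}}$ stay inside it. The statements are manifestly preserved under complementation, which simply interchanges the two halves, and for a countable union $B=\bigcup_{n}B_{n}$ the positive half follows from the density of $\{C':\exists n\, C'\subseteq B_{n}\}$ below a condition contained in $B$ together with the inductive hypothesis, while the negative half follows because $C\cap B_{n}=\emptyset$ for all $n$ lets one force $\dot{x_{G}}\notin B_{n}$ simultaneously over the ground-model index set $\omega$. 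Since the open sets generate the Borel sets under complementation and countable unions, the induction closes and the proposition follows.
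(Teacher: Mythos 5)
The paper offers no proof of this proposition: it is quoted as background from Zapletal's \emph{Forcing Idealized}, and your argument is essentially the standard proof from that reference --- define $x_G$ by a fusion to conditions of vanishing diameter, note the density of $\{C : C \subseteq B \text{ or } C \cap B = \emptyset\}$ for each ground-model Borel $B$, and prove $C \subseteq B \Rightarrow C \Vdash \dot{x}_G \in B$ by induction on Borel codes. The argument is correct; two points deserve to be made explicit. First, your open and closed base cases both rest on the claim that $x_G \in \overline{C}$ for \emph{every} $C \in G$, not only for the members of the one fusion sequence you fixed; this follows from the same compatibility argument you use for uniqueness (any $C\in G$ has, for each $k$, a common lower bound with $C_k$ in $G$, which reinterprets to a nonempty set, so $\overline{C}$ meets $\overline{C_k}$ while $\mathrm{diam}(\overline{C_k}) \to 0$), but as written it is only implicit. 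Second, the phrase ``the fusion sequence is built with the first step chosen so that $\overline{V_0} \subseteq U$'' cannot be taken literally, since a single sequence must serve all Borel sets at once; what rescues the step is precisely the independence of $x_G$ from the chosen sequence, which lets you, for each condition $C' \in G$ with $C' \subseteq V$ and $\overline{V} \subseteq U$, restart the fusion below $C'$ and conclude $x_G \in \overline{C'} \subseteq \overline{V} \subseteq U$. With those clarifications the proof is complete and matches the standard one.
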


Given $M$ a transitive model of $ZFC$, we say that $x\in X$ is
generic over $M$ if 
\[
\{B\in\mathbb{P}_{I}\cap M\ :\ x\in B\}
\]
 is a generic filter over $M$.

A $\mathbb{P}_{I}$-generic point avoids all Borel $I$-small
sets of the ground model. When $\mathbb{P}_{I}$ is ccc, this is a
complete characterization of the generic points:

\begin{prop}
Let $M$ be a transitive model of $ZFC$, and $I\in M$ a $\sigma$-ideal on $X$ such that $M\models\mathbb{P}_{I}$ is ccc. Then $x\in X$
is $\mathbb{P}_{I}$ generic over $M$ if and only if for every $I$-small set $B$ coded in $M$, $x\notin B$.
\end{prop}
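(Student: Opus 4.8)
The plan is to prove the two implications separately, with essentially all of the forcing-theoretic content (and the sole use of ccc) concentrated in the converse. Throughout, write $G_x=\{B\in\mathbb{P}_{I}\cap M:x\in B\}$ for the candidate filter associated to $x$; by the definition of genericity over $M$, the point $x$ is $\mathbb{P}_{I}$-generic over $M$ exactly when $G_x$ is an $M$-generic filter on $\mathbb{P}_{I}$. The whole proposition thus reduces to characterizing, in terms of small-set avoidance, when $G_x$ is a genuine generic filter.

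For the forward direction, which needs no chain condition, I would assume $G_x$ is generic and take any Borel set $B$ coded in $M$ with $M\models B\in I$. The natural dense set to exhibit is $D_B=\{C\in\mathbb{P}_{I}:C\cap B=\emptyset\}$. Since $B\in I$, for every $I$-positive $C$ the set $C\setminus B$ is again $I$-positive and refines $C$, so $D_B$ is dense; as it is definable from $B$ and $I$, both in $M$, we have $D_B\in M$. Genericity produces $C\in G_x\cap D_B$, and from $x\in C$ together with $C\cap B=\emptyset$ we read off $x\notin B$, as desired.

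For the converse, assume $x$ avoids every Borel set that $M$ believes is $I$-small. First I would verify that $G_x$ is a filter: upward closure is immediate from $x\in B\subseteq C$, while for downward directedness, given $B,C\in G_x$ the intersection $B\cap C$ is Borel, coded in $M$, and contains $x$, so by the standing hypothesis $M\not\models B\cap C\in I$; hence $B\cap C\in\mathbb{P}_{I}\cap M$ is a common lower bound lying in $G_x$. The substantive step is meeting dense sets. Given a dense $D\in M$, I work inside $M$ and, using $M\models\mathbb{P}_{I}$ ccc, choose a maximal antichain $A\subseteq D$; ccc forces $A=\{C_n:n\in\omega\}$ to be countable in $M$, so the Borel set $Y=X\setminus\bigcup_n C_n$ is coded in $M$. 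Maximality of $A$ plus density of $D$ gives $M\models Y\in I$: were $Y$ $I$-positive in $M$, density would refine it to a member of $D$ disjoint from every $C_n$, contradicting maximality. The hypothesis then yields $x\notin Y$, so $x\in C_n$ for some $n$, and this $C_n\in A\subseteq D$ witnesses $G_x\cap D\neq\emptyset$.

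The main obstacle — and the only place ccc is genuinely used — is precisely this replacement of the dense set $D$ by a \emph{countable} maximal antichain whose union is co-$I$-small and whose enumeration lies in $M$. Without the chain condition one can neither guarantee such an antichain is enumerable inside the (possibly countable) model $M$ nor ensure that its union covers $X$ modulo $I$, and the tidy equivalence between genericity and small-set avoidance fails; everything else is routine absoluteness bookkeeping about Borel codes in the transitive model $M$.
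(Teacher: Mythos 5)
Your proof is correct. The paper states this proposition without proof, as part of its review of standard facts about forcing with ideals from Zapletal's \emph{Forcing Idealized}, and your argument is exactly the standard one: the forward direction via the dense set of conditions disjoint from a fixed $I$-small Borel set, and the converse via a countable (by ccc, inside $M$) maximal antichain contained in a given dense set, whose union is co-$I$ and coded in $M$; you also correctly isolate ccc as needed only for the converse, which is consistent with the paper's surrounding remarks that for merely proper ideals avoidance of small sets does not characterize genericity.
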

\selectlanguage{english}%

Common in this paper is forcing over a model which is well founded
but not transitive -- what we really mean by that is forcing over its
transitive collapse.

\begin{cor}
Let $I$ be a $\sigma$-ideal such that $\mathbb{P}_{I}$ is ccc,
and $M$ a countable elementary submodel of a large enough $H_{\theta}$
such that $\mathbb{P}_{I}\in M$ and $B\in\mathbb{P}_{I}\cap M$ .
The set of elements of $B$ which are generic over $M$ is co-$I$
in $B$.
\end{cor}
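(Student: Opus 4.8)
The plan is to combine the characterization of genericity from the preceding proposition with the countability of $M$ and the $\sigma$-additivity of $I$. Since $M$ is a countable elementary submodel of $H_{\theta}$ rather than a transitive model, I would first pass to its transitive collapse $\bar{M}$, writing $\pi\colon M\to\bar{M}$ for the collapsing isomorphism; as explained in the remark above, being generic over $M$ means being generic over $\bar{M}$. Because $\omega\subseteq M$ and every natural number is fixed by $\pi$, each real in $M$---in particular each Borel code in $M$---is fixed by $\pi$, so the Borel sets coded in $M$ and those coded in $\bar{M}$ are the same subsets of $X$. By elementarity $\bar{M}\models$ ``$\mathbb{P}_{I}$ is ccc'', so the preceding proposition applies to $\bar{M}$.

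Next I would describe the non-generic points of $B$ explicitly. By the preceding proposition, a point $x\in X$ fails to be generic over $M$ exactly when $x\in C$ for some $I$-small Borel set $C$ coded in $M$. As $M$ is countable there are only countably many Borel codes in $M$, hence only countably many such $I$-small Borel sets, which I enumerate as $\{C_{n}:n\in\omega\}$. The set of elements of $B$ that are \emph{not} generic over $M$ is then precisely $B\cap\bigcup_{n}C_{n}$, a Borel set.

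The conclusion now follows from $\sigma$-additivity. Each $C_{n}\in I$, so $\bigcup_{n}C_{n}\in I$, and therefore its Borel subset $B\cap\bigcup_{n}C_{n}$ lies in $I$ as well. Thus the set of elements of $B$ generic over $M$ equals $B\setminus\bigcup_{n}C_{n}$, whose relative complement in $B$ belongs to $I$; that is, the generic elements form a co-$I$ subset of $B$, as claimed.

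The one step I would treat with care is the passage to the transitive collapse: one must verify that ``$I$-small Borel set coded in $M$'' and ``$I$-small Borel set coded in $\bar{M}$'' determine the same family of subsets of $X$, so that the cited proposition may be invoked inside $\bar{M}$ while its conclusion is read off in $V$. This rests on $\pi$ fixing reals together with the absoluteness of Borel-code evaluation between $\bar{M}$ and $V$, both standard for countable elementary submodels of $H_{\theta}$; everything else is a routine application of countability and $\sigma$-additivity.
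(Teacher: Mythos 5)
Your argument is correct and is exactly the derivation the paper intends (the corollary is stated without proof, following immediately from the ccc characterization of generics): the non-generics of $B$ are $B\cap\bigcup_{n}C_{n}$ for the countably many $I$-small Borel sets $C_{n}$ coded in $M$, and $\sigma$-additivity finishes it. Your care about the transitive collapse fixing Borel codes is the right point to flag and is handled correctly.
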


We say that $I$ is ccc if $\mathbb{P}_{I}$ is ccc, and that $I$
is proper if $\mathbb{P}_{I}$ is proper. Properness of $\mathbb{P}_{I}$
can be phrased in terms of the set of $M$-generics:

\begin{prop}
$\mathbb{P}_{I}$ is proper if and only if for every $M$ a countable
elementary submodel of a large enough $H_{\theta}$ such that $\mathbb{P}_{I}\in M$
and for every $B\in\mathbb{P}_{I}\cap M$, the set of elements of
$B$ which are generic over $M$ is $I$-positive. 
\end{prop}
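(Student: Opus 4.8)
The plan is to reduce properness of $\mathbb{P}_{I}$ to a statement about master conditions and then to identify, for each countable elementary submodel $M$, the master conditions below a given $B$ with a canonical Borel set of $M$-generic points. Fix a large $\theta$, a countable elementary submodel $M$ of $H_{\theta}$ with $\mathbb{P}_{I}\in M$, and enumerate as $\langle D_{n}:n\in\omega\rangle$ the open dense subsets of $\mathbb{P}_{I}$ lying in $M$ (countably many, since $M$ is countable). For each $n$ put $O_{n}=\bigcup(D_{n}\cap M)$, a Borel set as a countable union of Borel conditions, and set $G_{M}=\bigcap_{n}O_{n}$. The whole argument hinges on the claim that $G_{M}$ is exactly the set of points generic over $M$; granting this, $G_{M}$ is Borel and may be used as a condition.

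First I would prove that claim. A point $x$ is generic over $M$ iff $F_{x}=\{C\in\mathbb{P}_{I}\cap M:x\in C\}$ is a filter meeting every open dense set in $M$. Now $F_{x}$ is automatically upward closed and contains the top condition $X$, so the only issue is downward directedness. Given $C_{1},C_{2}\in F_{x}$, the set $D_{C_{1},C_{2}}=\{C\in\mathbb{P}_{I}:C\subseteq C_{1}\cap C_{2}\text{ or }C\subseteq X\setminus C_{1}\text{ or }C\subseteq X\setminus C_{2}\}$ is open dense (if $C\cap C_{1}\cap C_{2}\in I$ then one of $C\setminus C_{1},C\setminus C_{2}$ is $I$-positive) and belongs to $M$; since $x\in C_{1}\cap C_{2}$, any witness $C\in D_{C_{1},C_{2}}\cap M$ with $x\in C$ must satisfy $C\subseteq C_{1}\cap C_{2}$, providing a common lower bound in $F_{x}$. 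Thus $x\in\bigcap_{n}O_{n}$ already forces $F_{x}$ to be a generic filter, and conversely a generic $F_{x}$ meets each $D_{n}$, i.e.\ $x\in O_{n}$ for all $n$. Hence $G_{M}=\bigcap_{n}O_{n}$ is precisely the set of $M$-generics and is Borel.

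With $G_{M}$ in hand, both directions follow from the correspondence of the proposition above, used in the form: for a ground-model Borel set $O$, one has $q\Vdash\dot{x_{G}}\in\check{O}$ iff $q\setminus O\in I$. For the forward direction, assume $\mathbb{P}_{I}$ is proper and fix $B\in\mathbb{P}_{I}\cap M$. Properness yields a master condition $q\leq B$ over $M$; since $q\Vdash\dot{G}\cap\check{D_{n}}\cap\check{M}\neq\emptyset$, we get $q\Vdash\dot{x_{G}}\in O_{n}$ and therefore $q\setminus O_{n}\in I$ for every $n$. As $I$ is a $\sigma$-ideal, $q\setminus G_{M}=\bigcup_{n}(q\setminus O_{n})\in I$, so $q\cap G_{M}$ is co-$I$ in $q$ and in particular $I$-positive; being contained in $B\cap G_{M}$, this shows the set of $M$-generics inside $B$ is $I$-positive.

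For the converse, assume that for every such $M$ and every $B\in\mathbb{P}_{I}\cap M$ the set of $M$-generics in $B$ is $I$-positive. Since that set equals the Borel set $B\cap G_{M}$, it is a legitimate condition $q:=B\cap G_{M}\leq B$. Because $q\subseteq G_{M}\subseteq O_{n}$ we have $q\setminus O_{n}=\emptyset\in I$, hence $q\Vdash\dot{x_{G}}\in O_{n}$, which unwinds to $q\Vdash\dot{G}\cap\check{D_{n}}\cap\check{M}\neq\emptyset$ for each $n$; a routine elementarity argument upgrades this from open dense $D_{n}$ to arbitrary dense $D\in M$, so $q$ is a master condition below $B$. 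As $B$ and $M$ were arbitrary, $\mathbb{P}_{I}$ is proper. The main obstacle is the first claim, namely the Borelness of $G_{M}$ and especially the directedness argument showing that mere membership in $\bigcap_{n}O_{n}$ already makes $F_{x}$ a genuine filter; once $M$-genericity is pinned down as this explicit Borel condition, both implications are immediate consequences of the generic-point correspondence.
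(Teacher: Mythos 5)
Your argument is correct. The paper itself states this proposition without proof (it is quoted from Zapletal's \emph{Forcing Idealized}), so there is nothing to compare against, but what you give is the standard argument: identifying the set of $M$-generics with the Borel set $G_{M}=\bigcap_{n}\bigcup(D_{n}\cap M)$, and translating ``master condition over $M$'' into ``$q\setminus G_{M}\in I$'' via the fact that $q\Vdash\dot{x_{G}}\in\check{O}$ iff $q\setminus O\in I$. The two points that need care --- that directedness of $F_{x}$ follows from meeting the dense sets $D_{C_{1},C_{2}}$, and that meeting every \emph{open} dense set of $M$ suffices for genericity --- are both handled adequately.
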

\selectlanguage{english}%

As an example, we use the above characterization to show:

\begin{cor}
Forcing with a proper ideal preserves $\omega_{1}$.
\end{cor}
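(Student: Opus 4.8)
The plan is to show that $\mathbb{P}_{I}$ does not collapse $\omega_{1}$, which, since every ordinal of the extension already lies in $V$, amounts to showing that no condition forces a name $\dot{f}:\omega\to\omega_{1}^{V}$ to be surjective. It suffices to prove that below every condition $B_{0}$ and for every name $\dot{f}$ for such a function there are a condition $q\leq B_{0}$ and an ordinal $\delta<\omega_{1}$ with $q\Vdash\text{ran}(\dot{f})\subseteq\check{\delta}$; such a $q$ forces $\dot{f}$ not to be onto $\omega_{1}$, and the density of these $q$ rules out any surjection.

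First I would fix $B_{0}$ and $\dot{f}$, choose a large $\theta$, and take a countable elementary submodel $M\prec H_{\theta}$ with $\mathbb{P}_{I},I,B_{0},\dot{f}\in M$. Set $\delta:=M\cap\omega_{1}$, which is a countable ordinal since $M$ is countable. Applying properness to the model $M$ and the condition $B_{0}\in\mathbb{P}_{I}\cap M$ yields a master condition $q\leq B_{0}$ over $M$; equivalently, by the characterization above, $q$ may be taken so that it forces the generic point $\dot{x}_{G}$ to be generic over $M$.

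The heart of the argument is to bound the range. For each $n$ the set $D_{n}:=\{B\in\mathbb{P}_{I}:B\text{ decides the value }\dot{f}(n)\}$ is dense, and since $\mathbb{P}_{I},\dot{f}\in M$ elementarity gives $D_{n}\in M$. Because $q$ is a master condition, $q\Vdash\dot{G}\cap\check{D}_{n}\cap\check{M}\neq\emptyset$, so in any generic filter $G\ni q$ there is $B\in G\cap D_{n}\cap M$. Such a $B$ decides $\dot{f}(n)=\check{\alpha}$ for a unique ordinal $\alpha$; as $B\in M$ and the statement ``$B\Vdash\dot{f}(n)=\check{\alpha}$'' is absolute between $M$ and $H_{\theta}$, elementarity forces $\alpha\in M$, and since $B_{0}\Vdash\dot{f}(n)<\omega_{1}$ we get $\alpha\in M\cap\omega_{1}=\delta$. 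As $B\in G$, this pins the true value $\dot{f}(n)^{G}=\alpha<\delta$. Letting $n$ vary, $q\Vdash\text{ran}(\dot{f})\subseteq\check{\delta}$, which completes the reduction and hence the proof.

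I expect the main obstacle to be exactly the step where the value $\dot{f}(n)$ is captured inside $M$: a point that is merely $V$-generic need not be $M$-generic, so it is essential that $q$ force the generic to meet the dense sets $D_{n}$ within $M$ rather than only in $V$. This is precisely the content supplied by properness, through the master-condition clause or equivalently through the $I$-positivity of the set of $M$-generic points; once it is in hand, the remaining manipulations are routine elementarity and bookkeeping.
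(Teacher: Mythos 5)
Your proof is correct, and it reaches the same destination as the paper's by a noticeably different mechanism. You work purely combinatorially with the master-condition clause: the dense sets $D_{n}$ deciding $\dot{f}(n)$ lie in $M$, the master condition $q$ forces the generic filter to meet each $D_{n}$ inside $M$, and elementarity plus uniqueness of the decided value traps every $\dot{f}(n)$ in $M\cap\omega_{1}$, giving $q\Vdash\operatorname{ran}(\dot{f})\subseteq\check{\delta}$ directly; a density argument then finishes. The paper instead argues by contradiction using the characterization of properness specific to idealized forcing: it takes the $I$-positive Borel set $C$ of $M$-generic points, forces a $\mathbb{V}$-generic $x_{G}\in C$, observes that $x_{G}$ is then also $M$-generic, and compares the interpretations of $\dot{f}$ in $M[x_{G}]$ and $\mathbb{V}[x_{G}]$ to conclude that the countable model $M[x_{G}]$ would have to contain $\omega_{1}^{\mathbb{V}}$. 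Your argument is the standard one for abstract proper forcing and never invokes the set of $M$-generics or the generic real; the paper's version is tailored to $\mathbb{P}_{I}$ and doubles as an illustration of the ``set of $M$-generics'' characterization that the rest of the paper relies on. One small point of care in your write-up: the claim that $\alpha\in M$ is best justified by applying elementarity to the statement ``there exists $\alpha$ such that $B\Vdash\dot{f}(n)=\check{\alpha}$'' (all parameters in $M$) and then using uniqueness of the decided value, which is what you gesture at and is entirely routine.
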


\begin{proof}
Assume otherwise, and fix $B\in\mathbb{P}_{I}$ such that
\[
B\Vdash\dot{f}:\omega\to\omega_{1}\ onto.
\]
Let $M$ be a countable elementary submodel of a large enough $H_{\theta}$
such that $\mathbb{P}_{I}\in M$ and $B\in M$. Let $C\subseteq B$
be the $I$-positive Borel set of the $M$-generics of $B$. Now
force an $x_{G}\in C$ which is $\mathbb{V}$-generic. In $\mathbb{V}[x_{G}]$,
$x_{G}$ is still an element of $C$, and $C$ is defined as the set
of $M$-generics, so $x_{G}$ is $M$-generic. At the same time,
$\mathbb{V}[x_{G}]$ interprets $\dot{f}$ as a map from $\omega$
onto $\omega_{1}^{\mathbb{V}}$. The interpretation of $\dot{f}$
in $M[x_{G}]$ should be consistent with the one of $\mathbb{V}[x_{G}]$.
Since both models have the same interpretation of $\omega,$ $\dot{f}^{M[x_{G}]}=\dot{f}^{\mathbb{V}[x_{G}]}$,
and in particular $M[x_{G}]$ contains $\omega_{1}^{\mathbb{V}}$.
But $M[x_{G}]$ and $M$ have the same ordinals, and $M$ is countable
 -- a contradiction.
\end{proof}

This paper is about Borel canonization, and we make an informal claim
that Borel canonization and genericity over $M$ are strongly connected.
Intuitively, the generic elements over $M$ are well described by
the countably many conditions in $\mathbb{P}_{I}\cap M$, so one can
hope that restricting equivalence relations to the set of $M$-generics
will make the equivalence relation more definable. The above propositions
assure that when $I$ is proper, the set of generics is indeed big:
$I$-positive in general and co-$I$ for ccc ideals. As a result,
when $I$ is proper, for Borel canonization it will be enough to show
that the equivalence relation is simpler on the set of generics. For
improper ideals, a completely different approach should probably be
taken.

\subsection{Borel canonization of orbit equivalence relations and countable equivalence
relations}

We now give the proofs of the two Borel canonization results of Kanovei,
Sabok and Zapletal \cite{ksz}. The first one is rewritten using
the notion of Hjorth rank (see \cite{hjorth-paper,my-hjorth}), and the second
one is generalized so that it shows rectangular Borel canonization
of analytic sets with countable sections:

\begin{thm}
Proper ideals have Borel canonization of orbit equivalence
relations.
\end{thm}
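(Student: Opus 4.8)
The plan is to prove Borel canonization for orbit equivalence relations by combining the Hjorth rank machinery with the genericity philosophy developed in the preliminaries. Suppose $E=E_G^X$ is the orbit equivalence relation induced by a continuous action of a Polish group $G$ on $X$, and let $I$ be a proper $\sigma$-ideal with $B\in\mathbb{P}_I$. The goal is to produce an $I$-positive Borel set $B'\subseteq B$ on which $E$ is Borel. Following the remark at the end of the preceding subsection, it suffices to restrict to the set of $M$-generics for a suitable countable elementary submodel $M\prec H_\theta$ containing the relevant parameters (the action, $G$, $I$, $\mathbb{P}_I$), since properness guarantees that this set $C$ of $M$-generics is $I$-positive and Borel.

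The heart of the argument is the Hjorth rank. For each pair $x,y\in X$, the Hjorth analysis assigns ordinal ranks measuring the complexity of the potential witness $g\in G$ with $g\cdot x=y$; the key fact I would invoke is that on any orbit equivalence relation these ranks are bounded below $\omega_1$ along each orbit, and that ``$xEy$ with rank $\le\alpha$'' is a Borel condition uniformly in $\alpha<\omega_1$. The main step is then a \emph{rank-boundedness} claim: there is a fixed countable ordinal $\alpha_0$ such that for all $x,y\in C$, if $xEy$ then the Hjorth rank of the pair is at most $\alpha_0$. First I would fix $x,y\in C$ generic over $M$, observe that $M[x,y]$ sees a genuine witness $g$ in the (possibly extended) model, and run the standard absoluteness argument: the rank computed in $M[x,y]$ agrees with the real rank by $\mathbf{\Sigma_1^1}$-absoluteness, and since $M$ is countable, $\omega_1^{M[x,y]}$ is a fixed countable ordinal, giving a uniform bound $\alpha_0:=\omega_1^{M}$ (or a slight variant thereof) independent of the chosen generics.

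Once the uniform bound $\alpha_0$ is in hand, the conclusion is immediate: on $C$ we have
\[
xEy \iff x,y\in C \ \wedge\ (\text{Hjorth rank of }(x,y)\le\alpha_0),
\]
and the right-hand side is Borel because bounded-rank equivalence is Borel and $C$ is Borel. Hence $E\restriction C$ is Borel and $C$ is $I$-positive, which is exactly Borel canonization. Taking $B'=C\cap B$ (intersecting with the given condition, which we may assume lies in $M$) finishes the proof.

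The step I expect to be the main obstacle is the uniformity in the rank-boundedness claim. It is not enough that each individual $E$-related generic pair has countable rank; one needs a \emph{single} bound $\alpha_0$ valid across all generic pairs simultaneously, and the subtlety is ensuring the absoluteness computation of the rank in $M[x,y]$ is correct and that the witness $g\in G$ realizing $xEy$ actually appears in a model whose ordinals are controlled by $M$. Getting the generic pair $(x,y)$ into a common extension of $M$ (via the product or iteration of $\mathbb{P}_I$, or by a mutual-genericity argument) and verifying that properness is preserved there is the technical crux; once that is set up, the Hjorth rank does the rest.
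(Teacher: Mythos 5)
Your overall frame (fix a countable $M\prec H_\theta$, let $C$ be the $I$-positive Borel set of $M$-generics, bound a Hjorth-type rank on $C$ below $\omega_1^M$ via absoluteness) is exactly the paper's, but you run the rank on \emph{pairs} $(x,y)$, and that is where the argument breaks. The paper's proof uses the Hjorth rank as a \emph{unary} function $\delta(x)$ on points: every point has countable rank in every model, so $M[x]\models\delta(x)\leq\alpha$ for some $\alpha<\omega_1^{M[x]}=\omega_1^M$ (properness preserves $\omega_1$), the statement $\delta(x)\leq\alpha$ has a Borel definition and is therefore absolute, and hence $\delta$ is uniformly bounded by $\omega_1^M$ on all of $C$; uniform boundedness of the unary rank on $C$ is precisely what makes $\left(E_G^X\right)\restriction_{C}$ Borel. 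Only one generic at a time is ever forced, so the uniformity is automatic.

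Your version needs a single model $M[x,y]$ with $\omega_1^{M[x,y]}=\omega_1^M$ containing an \emph{arbitrary} $E$-related pair $x,y\in C$, and no such model exists in general. Two separately $M$-generic reals need not be mutually generic; worse, $E$-related pairs are typically maximally dependent (e.g.\ $y=g\cdot x$ for the acting group element $g$), so product forcing or iteration of $\mathbb{P}_I$ only covers a thin subset of $C\times C$ and in particular misses exactly the pairs you care about. If instead you place $x$ and $y$ into some other countable elementary submodel, its $\omega_1$ varies with the pair, and the supremum of the resulting bounds can be $\omega_1$, so no single $\alpha_0$ emerges. This is not a patchable technicality but the reason the rank is formulated on points rather than pairs; replacing your pair rank with the unary Hjorth rank $\delta(x)$, together with the fact that $E$ restricted to a set on which $\delta$ is uniformly bounded below $\omega_1$ is Borel, repairs the argument and reduces it to the paper's proof.
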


\begin{proof}
Let $G$ be a Polish group acting on a Polish space $X$, and $I$
a proper $\sigma$-ideal. We find $C$ Borel and $I$-positive
such that $\left(E_{G}^{X}\right)\restriction_{C}$ is Borel. Let
$\delta$ be the Hjorth rank associated with the action of $G$ on
$X$ . Fix $\theta$ large enough and $M\preceq H_{\theta}$ an elementary
submodel containing all the relevant information. Let $C$ be the
$I$-positive Borel set of $M$-generics, and  let $x\in C$ be $M$-generic. Then
\[
M[x]\models\delta(x)\leq\alpha
\]
for some $\alpha<\omega_{1}^{M[x]}=\omega_{1}^{M}$ . The rank $\delta$
has a Borel definition, hence $\mathbb{V}\models\delta(x)\leq\alpha$
as well. We have thus proved that the Hjorth rank on $C$ is uniformly
bounded below $\omega_{1}^{M}$, hence $\left(E_{G}^{X}\right)\restriction_{C}$
is Borel.
\end{proof}

\begin{thm}
Proper ideals have rectangular Borel canonization of analytic sets
with countable sections.
\end{thm}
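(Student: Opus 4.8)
The plan is to reduce $A$ to a countable union of graphs of Borel functions restricted to analytic domains, and then to make each of those analytic domains Borel on the set of $M$-generics. First I would invoke the Luzin--Novikov theorem for analytic sets with countable sections: there are total Borel functions $f_{n}\colon X\to X$ with $A\subseteq\bigcup_{n}\mathrm{graph}(f_{n})$, so that, writing $P_{n}=\{x:(x,f_{n}(x))\in A\}$, one has $A=\bigcup_{n}\mathrm{graph}(f_{n}\restriction P_{n})$ with each $P_{n}$ analytic. The task then becomes to find an $I$-positive Borel $B$ on which every $P_{n}$ is Borel, for then $A\cap(B\times X)=\bigcup_{n}\mathrm{graph}(f_{n}\restriction(P_{n}\cap B))$ is a countable union of Borel graphs, hence Borel.

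Next I would fix $\theta$ large and a countable $M\preceq H_{\theta}$ containing $I$, a code for $A$, and the sequence $(f_{n})$---which lies in $M$ by elementarity, its existence being expressible in $H_{\theta}$. Let $C\subseteq B_{0}$ be the $I$-positive Borel set of $M$-generics supplied by properness. Two observations drive the argument. First, for $x\in C$ every element of the true section $A_{x}$ has the form $f_{n}(x)$ and therefore lies in $M[x]$, since each $f_{n}$ is Borel and coded in $M$; thus the countable section is captured inside $M[x]$, with the candidate points $f_{n}(x)$ depending on $x$ in a uniformly Borel way. Second---and this is the main obstacle---I must show that $M[x]$ computes the section correctly, i.e.\ that for $y=f_{n}(x)$ one has $(x,y)\in A$ if and only if $M[x]\models(x,y)\in A$.

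The upward implication is just Mostowski absoluteness of the $\boldsymbol{\Sigma}_{1}^{1}$ statement $(x,y)\in A$. The downward implication is the delicate point, since $\boldsymbol{\Sigma}_{1}^{1}$ statements are not in general downward absolute: a witness to $(x,y)\in A$ lying in $V$ need not lie in $M[x]$. Here I would exploit that the witnessing set is closed. Writing $A=\{(x,y):\exists z\,(x,y,z)\in F\}$ with $F$ closed and coded in $M$, the set $W=\{z:(x,y,z)\in F\}$ is a closed set coded in $M[x]$; if it is nonempty in $V$ then, well-foundedness of the associated tree being absolute between transitive models, it is nonempty in $M[x]$, and its leftmost branch is a witness lying in $M[x]$. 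This yields $M[x]\models(x,y)\in A$ and closes the equivalence. It is precisely the countability of the sections, via the first observation, that confines $A_{x}$ to $M[x]$ and lets me run this closed-witness argument for every point of $A_{x}$.

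Finally I would convert the equivalence into Borelness on $C$. By the forcing theorem, for $x\in C$ the statement $M[x]\models(x,f_{n}(x))\in A$ holds exactly when some $B'\in\mathbb{P}_{I}\cap M$ with $x\in B'$ forces $(\dot{x}_{G},f_{n}(\dot{x}_{G}))\in\check{A}$; hence
\[
P_{n}\cap C=C\cap\bigcup\bigl\{\,B'\in\mathbb{P}_{I}\cap M:B'\Vdash(\dot{x}_{G},f_{n}(\dot{x}_{G}))\in\check{A}\,\bigr\},
\]
a countable union of Borel sets intersected with $C$, so Borel. Taking $B=C$ then makes $A\cap(B\times X)=\bigcup_{n}\mathrm{graph}(f_{n}\restriction(P_{n}\cap C))$ Borel, as required. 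The genuinely new ingredient over the countable-equivalence-relation case is the closed-witness argument of the previous paragraph; everything else parallels the orbit proof, with the $M$-generic set $C$ playing its usual role.
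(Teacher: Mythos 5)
Your proof is correct, but it reaches the conclusion by a genuinely different route than the paper. The paper first applies Zapletal's uniformization theorem (2.3.1 of \cite{forcing-idealized}) to produce a condition $B\in\mathbb{P}_{I}$ and a single Borel map $f\colon B\to X^{\omega}$ with $B\Vdash f(x_{G})\ enumerates\ A_{x_{G}}$ (using the hyperarithmetic characterization of countable $\Sigma^1_1(x)$ sets to see that ``all sections countable'' persists to the extension); it then restricts to the $M$-generics $C\subseteq B$ and uses absoluteness of the $\Pi^1_1$ statement ``every element of $A_x$ is some $(f(x))(n)$'' between $M[x]$ and $\mathbb{V}$ to obtain the Borel definition $(x,y)\in A\Leftrightarrow\exists n\,(f(x))(n)=y$ on $C$. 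You instead do the covering classically in the ground model via Luzin--Novikov and push the forcing-theoretic work to the end, where the forcing theorem over $M$ renders each analytic domain $P_{n}$ Borel on $C$; note that for analytic (rather than Borel) $A$ the covering step needs the reflection argument producing a Borel superset with countable sections before Luzin--Novikov applies, which is worth saying explicitly. Your ``closed-witness'' step is exactly Mostowski absoluteness---$\Sigma^1_1$ statements \emph{are} downward absolute to transitive models of enough ZF such as $M[x]$, by the very rank-function argument you give---so that point is standard rather than delicate, but it is handled correctly. The trade-off: the paper's route leans on Zapletal's uniformization machinery and gets an exact enumeration of the generic sections for free, while yours replaces it with a classical covering theorem plus the observation that $M$-genericity turns the analytic sets $\{x:(x,f_{n}(x))\in A\}$ into countable unions of conditions intersected with $C$, which is arguably more self-contained. (Two minor notational points: $B_{0}$ is never introduced, and $\check{A}$ should really be the reinterpretation of the analytic code for $A$ in the extension rather than a check name.)
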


\begin{proof}
Fix $I$ proper and $A$ an analytic subset of the plane with countable
sections. Recall that a $\Sigma_{1}^{1}(x)$ set is countable if and
only if all its elements are hyperarithmetic in $x$. One can then
show that ``all sections are countable'' is still true in generic
extensions. Use 2.3.1 of \cite{forcing-idealized} to find $B\in\mathbb{P}_{I}$
and a Borel $f:B\to X^{\omega}$ such that $B\Vdash f(x_{G})\ enumerates\ A_{x_{G}}.$

Fix $\theta$ large enough and $M\preceq H_{\theta}$ an elementary
submodel containing all the relevant information (including $f$ and
$B$). Let $C\subseteq B$ be the $I$-positive Borel set of $M$-generics, and let $x\in C$ be $M$-generic. Then
\[
M[x]\models f(x)\ enumerates\ A_{x},
\]
which is, 
\[
M[x]\models\forall y\ (y\in A_{x})\Rightarrow\exists n\in\omega\ (f(x))(n)=y.
\]
 That statement is $\Pi_{1}^{1},$ so it must be true in $\mathbb{V}$
as well -- which is, $(A_{x})^{\mathbb{V}}\subseteq M[x]$ . On the
other hand, if 
\[
(f(x))(n)=y
\]
 then $y\in M[x]$ and $M[x]\models y\in A_{x},$ hence $\mathbb{V}$
thinks the same.

The above results in a Borel definition of $A\cap(C\times X)$:
For $x\in C$ and $y\in X$,
\[
(x,y)\in A\Leftrightarrow\exists n\in\omega\ (f(x))(n)=y.\qedhere
\]
\end{proof}

\subsection{$\mathbb{P}_{I}$-measurable sets}

We quickly review the definitions and results of \cite{ikegami-1st}. 

A\textit{ tree on $\omega$} is a subset of $\omega^{<\omega}$ closed
under initial segments. The set of branches through $T$ is
\[
[T]=\{f\in\omega^{\omega}\ :\ \forall n\ f\restriction_{n}\in T\}.
\]
For a forcing notion $\mathbb{P}$ , we say that $\mathbb{P}$ is
\textit{strongly arboreal} if the conditions of $\mathbb{P}$ are
perfect trees on $\omega$, and 
\[
T\in\mathbb{P},\ s\in T\Rightarrow T\restriction_{s}\in\mathbb{P}
\]
where $T\restriction_{s}=\{t\ :\ t\in T;\ t\supseteq s\ or\ t\subseteq s\}$.
A strongly arboreal forcing notion adds a generic real $x_{G}$ such
that $\mathbb{V}[x_{G}]=\mathbb{V}[G]$. The generic $x_{G}$ is exactly
that real in $\mathbb{V}[G]$ that is a branch through all trees of
the generic filter $G$. We will abuse notation and say that $\mathbb{P}$
is strongly arboreal if it has a presentation that is strongly arboreal.

Let $\mathbb{P}$ be strongly arboreal. A set of reals $A$ is \textit{$\mathbb{P}$-null} if every $T\in\mathbb{P}$ can be extended to $T'$ such that
$[T']\cap A=\emptyset.$ We denote by $N_{\mathbb{P}}$ the collection
of $\mathbb{P}$-null sets, and by $I_{\mathbb{P}}$ the $\sigma$-ideal generated by the $\mathbb{P}$-null sets.

We say that a set of reals $A$ is \textit{$\mathbb{P}$-measurable}
if every $T\in\mathbb{P}$ can be extended to $T'$ such that $[T']\cap A\in I_{\mathbb{P}}$
or $[T']-A\in I_{\mathbb{P}}$.

Given a ccc ideal $I$ such that $\mathbb{P}_{I}$ is strongly arboreal,
one can consider all the above notions for $\mathbb{P}_{I}$. The
resulting $\sigma$-ideal $I_{\mathbb{P}_{I}}$ will be the one
generated by the Borel $I$-small sets. Hence when $I$ is Borel
generated, $I=I_{\mathbb{P}_{I}}.$ In that case, \emph{$\mathbb{P}_I$-measurable} is what is usually called \emph{$I$-measurable} (see \cite{brendle-lowe}). 

A $\sigma$-ideal $I$ is said to be $\mathbf{\Sigma_{n}^{1}}$ or $\mathbf{\Pi^1_n}$
if the set of Borel codes of $I$-small sets is. The term ``\textit{provably ccc}'' refers to $\sigma$-ideals which
are ccc in all models of $ZFC.$

\begin{thm}
\label{delta_1_2_and_L_generics}Let $I$ be a provably
ccc, provably $\mathbf{\Delta_{2}^{1}}$ and Borel generated $\sigma$-ideal such that $\mathbb{P}_{I}$ is strongly arboreal.  The following are equivalent:
\begin{enumerate}
\item Every $\mathbf{\Delta_{2}^{1}}$ set of reals is $\mathbb{P}_{I}$-measurable.
\item For any real $a$ and $T\in\mathbb{P},$ there is an $L[a]$ generic in $[T]$. 
\end{enumerate}
\end{thm}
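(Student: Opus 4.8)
The plan is to prove the two implications separately, following the classical Solovay--Ihoda--Shelah pattern adapted to $\mathbb{P}_{I}$ and relying on Shoenfield absoluteness of $\mathbf{\Sigma_{2}^{1}}$ and $\mathbf{\Pi_{2}^{1}}$ statements. Throughout I use that $I$ is Borel generated, so $I=I_{\mathbb{P}_{I}}$ and ``$\mathbb{P}_{I}$-measurable'' is the relevant notion, and that provable $\mathbf{\Delta_{2}^{1}}$-ness makes ``$B\in I$'' absolute between $L[a]$ and $V$.

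For $(2)\Rightarrow(1)$, fix a $\mathbf{\Delta_{2}^{1}}$ set $A$ with real parameter $a$, so that both $A$ and its complement have $\mathbf{\Sigma_{2}^{1}}(a)$ definitions, and fix $T\in\mathbb{P}_{I}$; absorbing a code for $T$ into $a$ we may assume $T\in L[a]$. First I would work inside $L[a]$, where $\mathbb{P}_{I}$ is ccc and the statement ``$x_{G}\in A$'' about the generic real is expressible; since forcing decides sentences, there is a condition $T'\leq T$ in $\mathbb{P}_{I}^{L[a]}$ forcing ``$x_{G}\in A$'' (the case of forcing its negation being symmetric). The role of (2) together with Shoenfield absoluteness is a truth lemma: if $x\in[T']$ is $\mathbb{P}_{I}$-generic over $L[a]$ then $L[a][x]\models x\in A$ by the forcing theorem, and since ``$x\in A$'' is $\mathbf{\Sigma_{2}^{1}}(a)$ it reflects up to $V$, so $x\in A$. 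Hence every $L[a]$-generic point of $[T']$ lies in $A$, and therefore $[T']\setminus A$ consists entirely of reals that are \emph{not} generic over $L[a]$. It then remains to show that this error set is $I$-small, which witnesses $\mathbb{P}_{I}$-measurability of $A$.

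To control the error set I would use that $[T']\setminus A$ is itself $\mathbf{\Sigma_{2}^{1}}(a)$ (because $A$ is $\mathbf{\Delta_{2}^{1}}$) and hence $\aleph_{1}$-Suslin via a tree in $L[a]$; by the Mansfield--Solovay analysis it is an increasing union of analytic sets $P_{\xi}$ coded in $L[a]$. Each $P_{\xi}$ is analytic, hence $\mathbb{P}_{I}$-measurable, with an approximating condition coded in $L[a]$ by absoluteness; if some $P_{\xi}$ were $I$-positive it would contain, modulo $I$, a condition $R\in\mathbb{P}_{I}^{L[a]}$, and (2) would produce an $L[a]$-generic inside $[R]\subseteq[T']\setminus A$, contradicting that $[T']\setminus A$ contains no $L[a]$-generics. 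Thus every $P_{\xi}$ is $I$-small; the remaining task is to conclude $[T']\setminus A\in I$ from the smallness of the pieces, which is the delicate point discussed below. For $(1)\Rightarrow(2)$ I would argue by contraposition: if for some $a$ and $T$ there is no $L[a]$-generic in $[T]$, then by the ccc characterization of genericity $[T]$ is covered by the $L[a]$-coded $I$-small Borel sets. Using the canonical $\mathbf{\Sigma_{2}^{1}}(a)$ good wellordering of $\mathbb{R}\cap L[a]$ one assigns to each $x\in[T]$ the least code of an $I$-small Borel set capturing it, partitioning $[T]$ into $I$-small pieces in a $\mathbf{\Sigma_{2}^{1}}(a)$ way; a Bernstein/Vitali-type transfinite construction along this wellordering then yields a $\mathbf{\Delta_{2}^{1}}$ set $Y\subseteq[T]$ such that both $Y$ and $[T]\setminus Y$ meet every Borel $I$-positive subset of $[T]$, so that $Y$ is not $\mathbb{P}_{I}$-measurable and (1) fails.

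The main obstacle is the smallness of the error set in $(2)\Rightarrow(1)$. Condition (2) only asserts the bare \emph{existence} of a generic in each condition, whereas one needs the $\mathbf{\Sigma_{2}^{1}}$ set of non-generics meeting $[T']\setminus A$ to be genuinely $I$-small; assembling $\aleph_{1}$-many $I$-small Mansfield pieces into an $I$-small union is not automatic for a $\sigma$-ideal and must be extracted from the $\mathbf{\Sigma_{2}^{1}}$ structure of the error set together with density of generics. This is precisely the gap between the $\mathbf{\Delta_{2}^{1}}$ level (existence of generics) and the $\mathbf{\Sigma_{2}^{1}}$ level (largeness of the set of generics), and bridging it is where all the hypotheses --- ccc, provably $\mathbf{\Delta_{2}^{1}}$, Borel generated, and strongly arboreal --- are used simultaneously.
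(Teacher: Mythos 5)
This theorem is not proved in the paper at all: it is quoted verbatim from Ikegami \cite{ikegami-1st} (the section opens with ``We quickly review the definitions and results of \cite{ikegami-1st}''), so there is no in-paper proof to compare against, and your attempt has to be judged on its own. Your overall orientation is right --- this is a Judah--Shelah/Brendle--L\"owe style argument, and the first half of your $(2)\Rightarrow(1)$ (pass to $T'\leq T$ deciding $\dot x_G\in A$ over $L[a]$, use Shoenfield to see that every $L[a]$-generic in $[T']$ lands in $A$) is correct. But the gap you yourself flag is fatal to the route you chose, not merely ``delicate.'' Decomposing the $\mathbf{\Sigma_2^1}(a)$ error set $[T']\setminus A$ into $\aleph_1^{L[a]}$ many $I$-small pieces coded in $L[a]$ cannot be pushed through: a union of $\aleph_1$ many $I$-small sets need not be $I$-small, and more decisively, the argument as you set it up uses only that the error set is $\mathbf{\Sigma_2^1}(a)$ and contains no $L[a]$-generics. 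The set of \emph{all} non-generics over $L[a]$ has exactly these two properties, so if your scheme worked it would prove that the non-generics form an $I$-small set --- i.e.\ it would prove condition (2) of Theorem \ref{sigma_1_2_and_L_generics} and hence $\mathbf{\Sigma_2^1}$-measurability from the mere existence of generics, which is strictly stronger than what is true. Any correct proof must exploit the $\mathbf{\Delta_2^1}$-ness of $A$ beyond ``both $A$ and its complement are $\mathbf{\Sigma_2^1}$.''

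The missing idea is an \emph{unfolding} step that drops the complexity from $\mathbf{\Sigma_2^1}$ to the analytic level before any smallness argument is attempted. Write $\phi(x,a)=\exists z\,\theta(x,z,a)$ with $\theta\in\Pi^1_1$. Once $T'\Vdash_{L[a]}\phi(\dot x_G,a)$, the maximal principle together with the ccc reading of names inside $L[a]$ produces a Borel function $f$ coded in $L[a]$ with $T'\Vdash\theta(\dot x_G,f(\dot x_G),a)$. Then $C=\{x\in[T']:\theta(x,f(x),a)\}$ is a coanalytic subset of $A$ coded in $L[a]$, and $[T']\setminus A\subseteq[T']\setminus C$, which is a single \emph{analytic} set coded in $L[a]$ containing no $L[a]$-generics. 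Since analytic sets are provably $\mathbb{P}_I$-measurable for provably ccc, provably $\mathbf{\Delta_2^1}$, strongly arboreal $\mathbb{P}_I$, an $I$-positive analytic set coded in $L[a']$ would contain (mod $I$) a condition, and (2) applied to $a'$ would plant a generic inside it --- contradiction. So $[T']\setminus C\in I_{\mathbb{P}_I}$ outright, with no transfinite union to assemble. This is precisely where the hypothesis ``a generic in every $[T]$'' (rather than ``co-$I$ many generics'') suffices, and it is the step your proposal lacks. Your $(1)\Rightarrow(2)$ sketch is also thinner than it needs to be --- making the Bernstein/Vitali set $\Delta^1_2$ and verifying that both halves are $I$-positive below every condition requires either a case split on whether $\mathbb{R}\cap L[a]$ is $I$-positive or a Fubini-type argument along the $L[a]$-wellorder --- but the idea there is at least the standard one.
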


\begin{thm}
\label{sigma_1_2_and_L_generics}Let $I$ be a provably
ccc, provably $\mathbf{\Delta_{2}^{1}}$ and Borel generated $\sigma$-ideal such that $\mathbb{P}_{I}$ is strongly arboreal. The following are equivalent:
\begin{enumerate}
\item Every $\mathbf{\Sigma_{2}^{1}}$ set of reals is $\mathbb{P}_{I}$-measurable.
\item For any real $z$, the set of $\mathbb{P}_I$ generics over $L[z]$ is co-$I$.
\end{enumerate}
\end{thm}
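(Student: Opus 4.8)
The plan is to prove the two implications separately, the main tools being Shoenfield absoluteness and the companion Theorem~\ref{delta_1_2_and_L_generics}. I will repeatedly use that since $I$ is provably ccc, provably $\mathbf{\Delta_{2}^{1}}$ and Borel generated, the poset $\mathbb{P}_{I}^{L[z]}$ computed inside $L[z]$ consists exactly of the genuinely $I$-positive Borel sets coded in $L[z]$ (positivity is $\mathbf{\Sigma_{2}^{1}}$, hence absolute between $L[z]$ and $V$ by Shoenfield), and that inside $L[z]$ every maximal antichain of $\mathbb{P}_{I}^{L[z]}$ is countable. I will also use the basic fact that for any real $x$ the inner model $L[z][x]$ is $\mathbf{\Sigma_{2}^{1}}(z)$-correct (Shoenfield), even when $\omega_{1}^{L[z][x]}<\omega_{1}^{V}$, and that $\mathbf{\Sigma_{2}^{1}}(z)$ statements are absolute between $V$ and its set-generic extensions.

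For the direction $(2)\Rightarrow(1)$, fix $A$ a $\mathbf{\Sigma_{2}^{1}}(z)$ set and write $G_{z}$ for the (co-$I$, by hypothesis) set of $\mathbb{P}_{I}$-generics over $L[z]$. Working in $L[z]$, I would choose a maximal antichain $\mathcal{A}=\{T_{n}:n<\omega\}$ of conditions each of which decides the sentence ``$\dot{x}_{G}\in\dot{A}$'', and split it as $\mathcal{A}^{+}\cup\mathcal{A}^{-}$ according to whether $L[z]\models T_{n}\Vdash\dot{x}_{G}\in\dot{A}$ or $L[z]\models T_{n}\Vdash\dot{x}_{G}\notin\dot{A}$. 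Put $U=\bigcup\{[T_{n}]:T_{n}\in\mathcal{A}^{+}\}$, a Borel set. The claim is that $A\mathbin{\triangle}U\subseteq X\setminus G_{z}$, whence $A=_{I}U$. Indeed, any $x\in G_{z}$ meets the maximal antichain $\mathcal{A}$, so $x\in[T_{n}]$ for some $n$; if $T_{n}\in\mathcal{A}^{+}$ the forcing theorem in $L[z]$ gives $L[z][x]\models x\in A$, and Shoenfield correctness of $L[z][x]$ upgrades this to $x\in A$ in $V$, while if $T_{n}\in\mathcal{A}^{-}$ the same argument gives $x\notin A$. This pins $A$ down on $G_{z}$ to equal $U$, and since $X\setminus G_{z}\in I$ the set $A$ equals the Borel set $U$ modulo $I$, hence is $\mathbb{P}_{I}$-measurable.

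For the direction $(1)\Rightarrow(2)$, I would first observe that the complement $N_{z}=X\setminus G_{z}$ of the generics is $\mathbf{\Sigma_{2}^{1}}(z)$: by the ccc characterization of genericity, $x\in N_{z}$ iff $x$ belongs to some $I$-small Borel set coded in $L[z]$, and this is $\mathbf{\Sigma_{2}^{1}}(z)$ because ``$c\in L[z]$'' is $\mathbf{\Sigma_{2}^{1}}(z)$ and ``$B_{c}\in I$'' is $\mathbf{\Delta_{2}^{1}}$. By hypothesis $(1)$, $N_{z}$ is $\mathbb{P}_{I}$-measurable, and I claim it is in fact $I$-small. Given any condition $T$, measurability yields $T'\le T$ with $[T']\cap N_{z}\in I$ or $[T']\setminus N_{z}\in I$. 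The second alternative is impossible: covering the $I$-small set $[T']\cap G_{z}=[T']\setminus N_{z}$ by an $I$-small Borel set $D$, the condition $[T']\setminus D$ would contain no $\mathbb{P}_{I}$-generic over $L[z]$ at all, contradicting Theorem~\ref{delta_1_2_and_L_generics}, which applies because $(1)$ implies in particular that all $\mathbf{\Delta_{2}^{1}}$ sets are $\mathbb{P}_{I}$-measurable. Hence the conditions $T'$ with $[T']\cap N_{z}\in I$ are dense; taking a maximal (countable, by ccc) antichain of them and using that it covers $X$ modulo $I$, $N_{z}$ is a countable union of $I$-small sets, so $N_{z}\in I$ and $G_{z}$ is co-$I$.

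The main obstacle, and the point requiring genuine care, is the transfer of forcing facts computed inside $L[z]$ (or $L[z][x]$) back up to $V$. The naive worry is that $L[z][x]$ may compute a countable $\omega_{1}^{L[z][x]}<\omega_{1}^{V}$ and therefore fail to evaluate the $\mathbf{\Sigma_{2}^{1}}$ set $A$ correctly; the resolution is exactly Shoenfield's theorem, which gives $\mathbf{\Sigma_{2}^{1}}(z)$-correctness of every inner model containing $z$ unconditionally. The second delicate point is verifying that $\mathbb{P}_{I}^{L[z]}$ is the ``correct'' forcing -- same conditions, and countable maximal antichains that remain predense for generics over $L[z]$ -- which is precisely what the provably-ccc, provably-$\mathbf{\Delta_{2}^{1}}$ and Borel-generated hypotheses buy. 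Once these are in place, the most substantive step is the covering trick in $(1)\Rightarrow(2)$, which converts the ``locally co-$I$'' behaviour of the non-generics into a single condition omitting all generics, thereby reducing the co-$I$ conclusion to the mere existence of generics supplied by Theorem~\ref{delta_1_2_and_L_generics}.
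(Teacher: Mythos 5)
The paper does not actually prove this theorem: it is imported verbatim from Ikegami \cite{ikegami-1st} in the preliminaries, so there is no in-paper argument to compare against. Your reconstruction follows the standard Solovay--Brendle--L\"owe--Ikegami route, and the direction $(2)\Rightarrow(1)$ is essentially right: deciding the $\Sigma_{2}^{1}(z)$ formula by a countable-in-$L[z]$ maximal antichain, transferring via the forcing theorem and $\mathbf{\Sigma_{2}^{1}}$-correctness of the inner model $L[z][x]$, and absorbing the error into the $I$-small set of non-generics. (You do silently use that every \emph{Borel} set is $\mathbb{P}_{I}$-measurable in order to pass from ``$A=_{I}U$ with $U$ Borel'' to measurability of $A$; that is standard for these forcings but worth a sentence.)

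The gap is in $(1)\Rightarrow(2)$, at the step ruling out $[T']\setminus N_{z}\in I$. You cover $[T']\cap G_{z}$ by an $I$-small Borel set $D$ and invoke Theorem~\ref{delta_1_2_and_L_generics} on ``the condition $[T']\setminus D$''. But that theorem produces an $L[a]$-generic only inside a set of the form $[S]$ for $S\in\mathbb{P}$ a tree, and $[T']\setminus D$ is not such a set. Passing to a tree $S$ with $[S]\subseteq[T']\setminus D$ modulo $I$ does not repair this: the $L[z]$-generic in $[S]$ may well land in the $I$-small difference $[S]\setminus([T']\setminus D)$, because that set need not be coded in $L[z]$, so genericity over $L[z]$ gives no reason to avoid it. (For the Cohen/meager case a genuine tree inside $[T']\setminus D$ typically does not exist, so the mod-$I$ detour is unavoidable.) The correct move is to fold the offending code into the parameter: let $d$ be a Borel code for $D$ and apply Theorem~\ref{delta_1_2_and_L_generics} with $a=\langle z,d\rangle$ and the tree $T'$ itself. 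The resulting $x\in[T']$ is $L[\langle z,d\rangle]$-generic, hence (by the ccc characterization of genericity, since $L[z]\subseteq L[\langle z,d\rangle]$) also $L[z]$-generic, so $x\in[T']\cap G_{z}\subseteq D$; yet $D$ is an $I$-small Borel set coded in $L[\langle z,d\rangle]$, so $x\notin D$ --- a contradiction that never mentions $[T']\setminus D$. With that substitution the rest of your argument (density of the conditions deciding $N_{z}$ negatively, the countable maximal antichain, and the covering of $X$ modulo $I$) goes through.
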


The above were used in \cite{ikegami-2nd} to obtain:

\begin{thm}
\label{thm_Ikegami_more_precise}Let $I$ be a provably
ccc, $\mathbf{\Sigma_{1}^{1}}$ and Borel generated $\sigma$-ideal such that $\mathbb{P}_{I}$ is strongly arboreal. Then the following are equivalent:
\begin{enumerate}
\item $I$ has strong rectangular Borel canonization.
\item Every $\mathbf{\Sigma_{2}^{1}}$ set of reals is $\mathbb{P}_{I}$-measurable.
\end{enumerate}
\end{thm}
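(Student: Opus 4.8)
The plan is to prove the two conditions equivalent by inserting between them the intermediate condition of Theorem \ref{sigma_1_2_and_L_generics}. Since a $\mathbf{\Sigma_{1}^{1}}$ ideal is provably $\mathbf{\Delta_{2}^{1}}$, that theorem applies and tells us that (2) is equivalent to: for every real $z$, the set of $\mathbb{P}_{I}$-generics over $L[z]$ is co-$I$. It therefore suffices to prove that strong rectangular Borel canonization is equivalent to this genericity condition. Throughout I present $A$ as $\mathbf{\Sigma_{1}^{1}}(z)$ and write the complementary section as $\complement A_{x}=\{y:T_{x,y}\text{ is well founded}\}$ for a tree $T_{x,y}$ recursive in $(x,y,z)$, and I put $\delta(x)=\sup\{\rho(T_{x,y}):T_{x,y}\text{ well founded}\}$. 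By the boundedness theorem, $A_{x}$ is Borel iff $\delta(x)<\omega_{1}$, and a Borel co-$I$ set $B$ witnesses canonization for $A$ iff $\delta\restriction B$ is bounded below $\omega_{1}$: on such a $B$ one has $(x,y)\in A\iff\neg(T_{x,y}\text{ is well founded of rank}\le\beta)$, which is Borel.

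For the direction from genericity to canonization, fix $A$ and let $B$ be the Borel co-$I$ set of $\mathbb{P}_{I}$-generics over $L[z]$. Working inside $L[z]$, the name $\dot{\delta}$ for the rank of the generic section is forced to be a countable ordinal, so by provable ccc there is a single $\beta<\omega_{1}^{L[z]}$ with $\mathbb{P}_{I}\Vdash\dot{\delta}<\check{\beta}$ in $L[z]$; consequently every $x\in B$ satisfies $\delta^{L[z][x]}(x)<\beta$. The key step is to transfer this bound to $\mathbb{V}$: the statement ``there is $y$ with $T_{x,y}$ well founded of rank $\ge\beta$'' is $\mathbf{\Sigma_{2}^{1}}$ in parameters $x,z$ and a code for $\beta$, all lying in the inner model $L[z][x]$, so by Shoenfield absoluteness between $L[z][x]$ and $\mathbb{V}$ it is false in $\mathbb{V}$ as well, whence $\delta^{\mathbb{V}}(x)=\delta^{L[z][x]}(x)<\beta$. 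Thus $\delta\restriction B$ is uniformly bounded by $\beta<\omega_{1}$ and $A\cap(B\times X)$ is Borel. This is exactly where the countable-sections argument must be strengthened: there one transferred only a $\mathbf{\Pi_{1}^{1}}$ statement across $M$-genericity, whereas the inclusion ``$B_{c}\subseteq A_{x}$'' for a Borel code $c$ is genuinely $\mathbf{\Sigma_{2}^{1}}$, and its absoluteness forces passage to the inner model $L[z][x]$ and hence the use of $\mathbf{\Sigma_{2}^{1}}$-measurability rather than mere properness.

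For the converse I would argue contrapositively: assuming that for some $z_{0}$ the set of non-generics over $L[z_{0}]$ is $I$-positive, I would build an analytic set $A$ with Borel sections on which canonization fails. Using the $\mathbf{\Sigma_{2}^{1}}$ good wellordering of $L[z_{0}]$, attach to each non-generic $x$ a coanalytic section whose rank is cofinal in $\omega_{1}^{L[z_{0}]}$, arranged so that the ranks $\delta(x)$ are unbounded on every co-$I$ Borel set -- which is possible because the non-generics are $I$-positive and hence meet every such set in an $I$-positive set. By the reduction of the first paragraph this defeats every candidate $B$. This is the counterexample technique developed in Section 5, and I would reuse that machinery here.

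The main obstacle is the absoluteness bookkeeping in the first direction, in two places: first, reflecting ``$A$ has Borel sections'' from $\mathbb{V}$ to the assertion that $\mathbb{P}_{I}$ forces a Borel section over $L[z]$, so that $\dot{\delta}$ is indeed a name for a countable ordinal there; and second, ensuring that all parameters needed for the Shoenfield transfer -- the bound $\beta$ and the ordinal codes witnessing rank -- actually lie in $L[z][x]$. Both succeed precisely because $x$ is generic over the \emph{inner} model $L[z]$, whose ordinals exhaust those below $\omega_{1}^{L[z]}$; this is what makes $\mathbf{\Sigma_{2}^{1}}$ absoluteness available and is the structural reason the theorem demands $\mathbf{\Sigma_{2}^{1}}$-measurability.
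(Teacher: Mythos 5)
First, for calibration: the paper does not prove this theorem from scratch. It attributes it to Ikegami, and records only that direction $(2)\Rightarrow(1)$ follows from Theorem \ref{sigma_1_2_and_L_generics} together with Lemma \ref{main_lemma_ikegami}, both quoted without proof from \cite{ikegami-1st} and \cite{ikegami-2nd}. Your outline follows exactly that decomposition -- reduce $(2)$ to ``the $L[z]$-generics are co-$I$'' and then show that the Borel set of $L[z]$-generics witnesses canonization -- so what you are really attempting is a proof of Lemma \ref{main_lemma_ikegami}. That attempt has a genuine gap at precisely the hard point. You assert that, working in $L[z]$, ``the name $\dot{\delta}$ for the rank of the generic section is forced to be a countable ordinal.'' This amounts to transferring ``$A_{x_G}$ is Borel'' from $\mathbb{V}$ down to the $\mathbb{P}_I$-generic extensions of $L[z]$. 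But ``$A_x$ is Borel'' is $\Sigma_3^1$ and ``all sections are Borel'' is $\Pi_4^1$, and the paper itself shows these are not absolute (see the complexity proposition at the end of Section 5, and the remark following Proposition \ref{non_absoluteness_of_Borel_sections}, which exhibits an analytic $E$ with a non-Borel class in $L$ all of whose classes are Borel after collapsing $\omega_1$ -- so downward transfer to $L[z]$ and its extensions can fail). Indeed, Section 3 of the paper explicitly identifies this transfer as ``in general impossible'' and introduces the measurable-cardinal/iterated-ultrapower machinery exactly to get around it. Your closing paragraph names this obstacle but the justification offered -- that $x$ is generic over the inner model $L[z]$ -- only buys Shoenfield absoluteness between $L[z][x]$ and $\mathbb{V}$ for $\mathbf{\Sigma_2^1}$ statements, which is the easy half of your argument; it does not touch the $\Pi_4^1$ statement you need to reflect. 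Note also that hypothesis $(2)$ does not imply $\omega_1^{L[z]}<\omega_1$ (unlike the inaccessibility hypothesis of Section 3.2), so you cannot fall back on everything relevant in $L[z]$ being countable in $\mathbb{V}$. Whatever argument closes this gap is the actual content of Ikegami's lemma, and it is missing.

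The converse direction $(1)\Rightarrow(2)$ is only gestured at. Granting the reduction via Theorem \ref{sigma_1_2_and_L_generics}, you must produce, from the assumption that the non-generics over some $L[z_0]$ are $I$-positive, an \emph{analytic} (or coanalytic) set all of whose sections are Borel in $\mathbb{V}$ but whose rank is unbounded on every co-$I$ Borel set. The natural first attempt -- pairing each $x$ with the codes $c\in L[z_0]$ of $I$-small Borel sets containing $x$ -- yields a $\mathbf{\Sigma_2^1}$ set, not an analytic one, because ``$c\in\mathbb{R}^{L[z_0]}$'' is $\Sigma_2^1(z_0)$. Getting the complexity down to $\Sigma_1^1/\Pi_1^1$ is where the hypothesis that $I$ is $\mathbf{\Sigma_1^1}$ must be used, and your proposal never invokes it; ``attach to each non-generic $x$ a coanalytic section whose rank is cofinal in $\omega_1^{L[z_0]}$'' is not yet a construction, and you must also verify that the sections at \emph{generic} points remain Borel so that the resulting $A$ is a legitimate instance of canonization. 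As it stands, both directions are incomplete, with the $(2)\Rightarrow(1)$ gap being the essential one.
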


In fact, Ikegami has shown the following:

\begin{lem}
\label{main_lemma_ikegami}Let $A$ be a $\Sigma_{1}^{1}(z)$ subset
of the plane with Borel sections. Then for $B$ a Borel subset of
$L[z]$-generics, $A\cap(B\times\omega^{\omega})$ is Borel.
\end{lem}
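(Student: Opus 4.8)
The plan is to reduce the statement to reading off, in a Borel way from $x\in B$, a countable ordinal that bounds the ``Borelness rank'' of the section $A_x$, and then to obtain this bound from the genericity of $x$ over $L[z]$. Fix a tree $S$ on $\omega\times\omega\times\omega$, recursive in $z$, with $A=p[S]$, so that for each $x$ the section is $A_x=p[S_x]$ with $S_x$ recursive in $(z,x)$, and the complement $A_x^{c}$ is $\Pi_1^1(z,x)$. For $y\notin A_x$ the tree $S_x(y)=\{t:(y\restriction|t|,t)\in S_x\}$ is wellfounded; write $\rho(x,y)$ for its rank and set $\delta(x)=\sup\{\rho(x,y)+1:(x,y)\notin A\}$. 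By $\Sigma_1^1$-boundedness, $A_x$ is Borel if and only if $\delta(x)<\omega_1$, and in that case $A_x^{c}=\{y:\rho(x,y)\le\delta(x)\}$, a Borel set whose code is determined by a wellorder coding the ordinal $\delta(x)$. By hypothesis $\delta(x)<\omega_1$ for every $x$. Thus it suffices to produce a Borel function $x\mapsto w(x)$ on $B$ with $w(x)$ a code for an ordinal $\ge\delta(x)$: for then, for $x\in B$, we have $(x,y)\in A$ if and only if $S_x(y)$ is not wellfounded of rank $\le$ the ordinal coded by $w(x)$, and since the rank of the tree $S_x(y)$ is Borel in $(x,y)$ and its comparison with the coded ordinal $w(x)$ (Borel in $x$) is Borel, the set $A\cap(B\times\omega^\omega)$ is Borel.

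To produce $w(x)$ I would use that $x\in B$ is $\mathbb{P}_I$-generic over $L[z]$; as $\mathbb{P}_I$ is strongly arboreal, $L[z][x]=L[z][G]$. The central claim is that for such $x$ the Borelness rank is computed correctly and early, namely $\delta(x)=\delta^{L[z][x]}(x)<\omega_1^{L[z][x]}$, where $\delta^{L[z][x]}(x)=\sup\{\rho(x,y)+1:y\in L[z][x],\,(x,y)\notin A\}$. Granting this, $L[z][x]$ contains a code for an ordinal bounding $\delta(x)$; since $\omega_1^{L[z][x]}$ is countable in $V$ and $x$ generically determines $L[z][x]$, such a code can be chosen uniformly Borel in $x$ on $B$, which is the desired $w(x)$.

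The main obstacle is exactly the equality $\delta(x)=\delta^{L[z][x]}(x)<\omega_1^{L[z][x]}$. The inequality $\delta^{L[z][x]}(x)\le\delta(x)$ is immediate, since ranks of wellfounded trees are absolute. The content is that no $V$-real $y$ outside $L[z][x]$ with $(x,y)\notin A$ can have $\rho(x,y)\ge\omega_1^{L[z][x]}$, equivalently that the $\Pi_1^1(z,x)$ rank on $A_x^{c}$ stays bounded below $\omega_1^{L[z][x]}$ in $V$. This cannot come from Shoenfield absoluteness, since $\omega_1^{L[z][x]}<\omega_1^V$ in the situations of interest and ``$A_x$ is Borel'' is only $\Sigma_2^1(z,x)$; it is precisely here that genericity of $x$ over $L[z]$ must be used. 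The idea is that unboundedness of this rank in $V$ is a phenomenon that would already be forced over $L[z]$ by a condition below $x$, contradicting that the section $A_{\dot{x}}$ of the generic is genuinely Borel; concretely one argues that a putative $V$-witness $y$ of an overlarge rank can be captured into a further extension of $L[z][x]$ over which $x$ remains generic, reflecting the unboundedness back into $L[z][x]$ and hence into the truly Borel section, a contradiction. This reflection step, tying the $V$-behaviour of the $\Pi_1^1$-rank to the forcing over $L[z]$, is the heart of the argument; once it is in place, the assembly of the first paragraph completes the proof.
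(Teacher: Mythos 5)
Your first paragraph (the reduction, via a tree $S$ with $A=p[S]$ and the rank $\delta(x)$, to producing a Borel-in-$x$ bound on the rank of the section, followed by $\Sigma^1_1$-boundedness) is sound and matches the machinery the paper sets up in Section 3. Note, however, that the paper does not actually prove Lemma \ref{main_lemma_ikegami} --- it is quoted from \cite{ikegami-2nd} --- so the only internal point of comparison is the proposition of Section 3.2, which for sections of bounded Borel rank $\mathbf{\Pi}^0_\gamma$ obtains, via Stern's theorem, only the bound $\delta(x)<\omega_{\gamma+1}^{L[z,\gamma]}$ for generics. That ordinal is in general far above $\omega_1^{L[z][x]}$, and this discrepancy already points at the problem with your plan.

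The gap is that your ``central claim'' $\delta(x)=\delta^{L[z][x]}(x)<\omega_1^{L[z][x]}$ --- which you correctly identify as carrying the whole weight of the proof, and for which you offer only a one-sentence reflection heuristic --- is false, not merely unproven. Take $z=0$ and let $C$ be the lightface $\Pi^1_1$ set of reals coding wellfounded extensional models of (a suitable finite fragment of) $ZF^-+V=L+{}$``there is exactly one uncountable cardinal''. The ordinal heights of such models are exactly certain ordinals cofinal in and bounded by $\omega_2^L$, so if $\omega_2^L<\omega_1$ (e.g.\ when $\omega_1$ is inaccessible to the reals) the set $C$ is Borel. Put $A=\omega^\omega\times(\omega^\omega\setminus C)$ with the natural tree $S$: every section of $A$ equals the Borel set $\omega^\omega\setminus C$, yet for every $x$ the rank of $S_x(y)$ at $y\in C$ is essentially the height of the model coded by $y$, so $\delta(x)\geq\omega_2^L>\omega_1^L=\omega_1^{L[x]}$ for every $\mathbb{P}_I$-generic $x$ over $L$ (ccc forcing preserves $\omega_1^L$); and no recursive choice of $S$ can repair this, since a rank bound $\sigma<\omega_1^L$ would make $C$ a $\Delta^1_1(w)$ set for some code $w\in L$ of $\sigma$ with $\omega_1^{CK}(w)<\omega_1^L$, and effective boundedness would then bound the heights below $\omega_1^L$. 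Meanwhile $\delta^{L[x]}(x)$ is a supremum of ranks of trees countable in $L[x]$ and hence is at most $\omega_1^{L[x]}$, so the equality $\delta(x)=\delta^{L[z][x]}(x)$ fails as well. (The lemma itself survives here --- $A\cap(B\times\omega^\omega)=B\times(\omega^\omega\setminus C)$ is Borel --- so the example refutes your method, not the statement.) The correct bound on $\delta(x)$ for generics must involve higher cardinals of $L[z]$, which is precisely what Stern's theorem is doing in Section 3.2, and Ikegami's actual argument cannot be recovered from the reflection step you defer. A secondary problem of the same flavour: your Borel selection of the code $w(x)$ invokes the countability of $\omega_1^{L[z][x]}$ in $V$, which is not among the hypotheses and fails exactly where the lemma does its real work for Theorem \ref{thm_Ikegami_more_precise} (e.g.\ the random real model over $L$, where $\omega_1=\omega_1^L$ and $\mathbf{\Sigma}^1_2$ sets are Lebesgue measurable).
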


Lemma \ref{main_lemma_ikegami} together with theorem \ref{sigma_1_2_and_L_generics} proves
$(2)\Rightarrow(1)$ of theorem \ref{thm_Ikegami_more_precise}.

\section{Ranks for Analytic Sets with Borel Sections}

Let $A$ be an analytic subset of $(\omega^{\omega})^{2}$. There
exists a tree $T\subseteq\omega^{\omega}\times\omega^{\omega}\times\omega^{\omega}$
such that
\[
(x,y)\in A\Leftrightarrow T_{xy}\notin WF.
\]

For $\alpha<\omega_{1}$, define:
\[
(x,y)\in A_{\alpha}\Leftrightarrow T_{xy}\notin WF_{\alpha}.
\]

The sequence $A_{\alpha}$ is decreasing, $A_{\delta}=\cap_{\alpha<\delta}A_{\alpha}$
for $\delta$ limit, and

\[
A=\cap_{\alpha<\omega_{1}}A_{\alpha}.
\]

\begin{defn}
For $x\in\omega^{\omega}$, the\textit{ rank of $x$} , $\delta(x)$,
is the least $\alpha$ such that $A_{x}=(A_{\alpha})_{x}$, if such
an $\alpha$ exists, and $\infty$ if there is no such $\alpha$.
\end{defn}

\begin{prop}
\label{boundedness_for_sections}If $A_{x}$ is Borel, then there
is $\alpha<\omega_{1}$ such that $A_{x}=(A_{\alpha})_{x}$.
\end{prop}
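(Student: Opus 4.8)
The plan is to recognize $\delta(x)$ as (essentially) a $\Pi^1_1$-rank and to invoke $\Sigma^1_1$-boundedness. Fix $x$. Since $A$ is coded by the tree $T$, the assignment $y\mapsto T_{xy}$ is a continuous (indeed recursive-in-$x$) map from $\omega^\omega$ into the Polish space of trees on $\omega$, because the restriction $T_{xy}\cap\omega^{\leq n}$ depends only on $y\restriction n$. By definition $(x,y)\in A$ iff $T_{xy}$ is ill-founded, so the complement section $C:=\omega^\omega\setminus A_x=\{y:T_{xy}\in WF\}$ is the continuous preimage of the set $WF$ of well-founded trees; it is therefore $\Pi^1_1(x)$, and the map $y\mapsto\rho(T_{xy})$ sending $y$ to the ordinal rank of the well-founded tree $T_{xy}$ is the pullback of the canonical $\Pi^1_1$-rank on $WF$, hence a $\Pi^1_1(x)$-rank on $C$.

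First I would unwind the definitions to see what $A_x=(A_\alpha)_x$ really demands. Since $T_{xy}\in WF_\alpha$ exactly when $T_{xy}$ is well-founded of rank $<\alpha$, we have $(A_\alpha)_x=\{y:T_{xy}\text{ is ill-founded, or }\rho(T_{xy})\geq\alpha\}$. Thus $A_x\subseteq(A_\alpha)_x$ holds automatically, and the reverse inclusion $A_x=(A_\alpha)_x$ holds precisely when no section tree $T_{xy}$ is well-founded of rank $\geq\alpha$, i.e. when $\sup\{\rho(T_{xy}):y\in C\}<\alpha$. So the proposition reduces to the single claim that this supremum of ranks is a countable ordinal.

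This is exactly where the hypothesis enters, and it is the crux. Because $A_x$ is assumed Borel, its complement $C$ is Borel, and in particular $\Sigma^1_1$. We are therefore in the situation of the $\Sigma^1_1$-boundedness theorem for $\Pi^1_1$-ranks: a $\Sigma^1_1$ subset of a $\Pi^1_1$ set carrying a $\Pi^1_1$-rank has rank bounded below $\omega_1$. Applying this with the $\Sigma^1_1$ set taken to be $C$ itself, viewed as a subset of the $\Pi^1_1$ domain $C$, I conclude $\sup\{\rho(T_{xy}):y\in C\}<\omega_1$. Choosing any countable ordinal $\alpha$ strictly above this supremum then gives $A_x=(A_\alpha)_x$, which is the assertion to be proved.

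I expect the only genuine subtlety to be the bookkeeping around the rank: verifying that $y\mapsto\rho(T_{xy})$ is legitimately a $\Pi^1_1(x)$-rank so that boundedness is applicable, and matching whatever off-by-one convention is built into $WF_\alpha$ to the choice of $\alpha$. It is worth emphasizing that the Borel hypothesis is essential rather than merely convenient: if $A_x$ were properly analytic, then $C$ would be a non-Borel $\Pi^1_1$ set, on which the rank is necessarily unbounded in $\omega_1$, forcing $\delta(x)=\infty$. This both confirms that the statement is sharp and explains why $\Sigma^1_1$-boundedness is precisely the right instrument.
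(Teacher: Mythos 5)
Your proposal is correct and is essentially the paper's proof: both take the Borel complement section $\{y : T_{xy}\in WF\}$, use the continuous map $y\mapsto T_{xy}$, and apply $\Sigma^1_1$-boundedness to conclude the ranks are bounded below $\omega_1$. Whether one pushes the set forward and bounds its analytic image inside $WF$ (as the paper does) or pulls the canonical rank back to $C$ (as you do) is only a cosmetic difference.
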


\begin{proof}
Since 
\[
(^{\sim}A)_{x}=\{y\ :\ (x,y)\notin A\}=\{y\ :\ T_{xy}\in WF\}
\]
 is a Borel set, its image under $y\to T_{xy}$ is an analytic subset
of $WF$. By the boundedness theorem for $WF$, its image is contained
in $WF_{\alpha}$ for some countable $\alpha$, which is:
\[
y\in A_{x}\Leftrightarrow T_{xy}\notin WF_{\alpha}\Leftrightarrow y\in(A_{\alpha})_{x}
\]
as we wanted to show.
\end{proof}

\begin{prop}
\label{definability_of_rank}The set $\Delta=\{(x,f)\ :\ f\in WO,\ \delta(x)\leq ot(f)\}$
is $\Pi_{2}^{1}$. The set $\{x\ :\ A_{x}\ Borel\}$ is $\Sigma_{3}^{1}$.
\end{prop}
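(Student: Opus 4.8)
The plan is to read the complexity off directly from the definitions, the only genuine work being to identify the complexity of the approximations $A_{\alpha}$ when the ordinal $\alpha$ is presented by a code $f\in WO$. First I would reduce the relation $\delta(x)\le ot(f)$ to a statement purely about the section $A_{x}$. Since the sequence $(A_{\alpha})$ is decreasing with $A=\bigcap_{\alpha<\omega_{1}}A_{\alpha}$, we have $A\subseteq A_{\beta}\subseteq A_{\alpha}$ whenever $\alpha\le\beta$, so the property ``$A_{x}=(A_{\alpha})_{x}$'' is upward closed in $\alpha$. Consequently $\delta(x)\le ot(f)$ is equivalent to the single equality $A_{x}=(A_{ot(f)})_{x}$, and since $A_{x}\subseteq(A_{ot(f)})_{x}$ holds automatically, this equality amounts to the reverse inclusion
\[
\forall y\,\bigl[(x,y)\in A_{ot(f)}\Rightarrow(x,y)\in A\bigr].
\]

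Next I would compute the complexity of the two matrix predicates. The predicate $(x,y)\in A$ is $\Sigma_{1}^{1}$. For the other, recall that $(x,y)\in A_{ot(f)}$ means $T_{xy}\notin WF_{ot(f)}$; by the standard rank-function characterization, for $f\in WO$ one has $T_{xy}\in WF_{ot(f)}$ exactly when there is a map $h\colon T_{xy}\to\mathrm{field}(f)$ that is strictly decreasing along end-extension, i.e. $s\subsetneq t\Rightarrow h(t)<_{f}h(s)$. The existence of such an $h$ is $\Sigma_{1}^{1}(x,y,f)$: one quantifies existentially over $h$, the remaining condition being arithmetic in $f$ and $T_{xy}$, the latter recursive in $(x,y)$. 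Hence ``$(x,y)\in A_{ot(f)}$'' is $\Pi_{1}^{1}(x,y,f)$. The matrix $(x,y)\in A_{ot(f)}\Rightarrow(x,y)\in A$ is therefore of the form $\Pi_{1}^{1}\Rightarrow\Sigma_{1}^{1}$, that is $\Sigma_{1}^{1}$; the universal quantifier over $y$ raises it to $\Pi_{2}^{1}$, and conjoining the $\Pi_{1}^{1}$ clause $f\in WO$ leaves it $\Pi_{2}^{1}$. This gives the first assertion.

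For the second assertion I would combine this with boundedness. Each $A_{\alpha}$ is Borel, so $(A_{\alpha})_{x}$ is Borel; together with Proposition \ref{boundedness_for_sections} this shows that $A_{x}$ is Borel if and only if $\delta(x)<\infty$, i.e. if and only if $\exists f\,(f\in WO\wedge\delta(x)\le ot(f))$, which is precisely $\exists f\,(x,f)\in\Delta$. Since $\Delta$ is $\Pi_{2}^{1}$, this existential projection over $f$ is $\Sigma_{3}^{1}$.

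The only delicate point, and the step I expect to require the most care, is the complexity count for the approximations: one must invoke the characterization of $WF_{ot(f)}$ by strictly decreasing maps into the code $f$, so that membership becomes $\Sigma_{1}^{1}$ in $f$ (and not $\Pi_{1}^{1}$), and one must apply it with the convention that makes $WF=\bigcup_{\alpha}WF_{\alpha}$ so that the equalities $A=\bigcap_{\alpha}A_{\alpha}$ and $A_{x}=(A_{ot(f)})_{x}$ line up with the definition of $\delta$. Everything else is routine bookkeeping within the projective hierarchy.
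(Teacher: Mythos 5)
Your proposal is correct and follows essentially the same route as the paper: the paper expresses $\delta(x)\le ot(f)$ as $\forall z\,(T_{xz}\in WF\Leftrightarrow T_{xz}\in WF_{ot(f)})$, which is exactly your $\forall y\,[(x,y)\in A_{ot(f)}\Rightarrow(x,y)\in A]$ after complementation and dropping the automatic inclusion, and then obtains $\Sigma_3^1$ by projecting over $WO$-codes via Proposition \ref{boundedness_for_sections}. You merely supply more detail than the paper does (the upward-closure observation and the $\Sigma_1^1(f)$ decreasing-map characterization of membership in $WF_{ot(f)}$), all of which is accurate.
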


\begin{proof}
$f\in WO$ is $\Pi_{1}^{1}$. The rank of $x$ is less than the order
type of $f$ if and only if 
\[
\forall z:\ T_{xz}\in WF\Leftrightarrow T_{xz}\in WF_{ot(f)}
\]
which is $\Pi_{2}^{1}$. For $x\in X$, $A_{x}$ is Borel if and only
if $\exists f$ such that $(x,f)\in\Delta$, which is $\Sigma_{3}^{1}$.
\end{proof}

\begin{prop}
Let $B\subseteq X$ be a Borel set. Then $A\cap(B\times X)$ is Borel
if and only if there is an $\alpha<\omega_{1}$ such that for all
$x\in B$, $\delta(x)<\alpha$.
\end{prop}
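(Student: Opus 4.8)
The plan is to prove the two implications separately. The forward direction (uniform bound $\Rightarrow$ Borel) is a direct computation, while the reverse direction is where the boundedness theorem for $WF$ carries the weight.

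First I would treat the direction that assumes a uniform bound. Suppose there is $\alpha<\omega_{1}$ with $\delta(x)<\alpha$ for every $x\in B$. By the definition of the rank, $A_{x}=(A_{\alpha})_{x}$ for each $x\in B$, so the vertical sections of $A$ and of $A_{\alpha}$ agree over $B$, giving
\[
A\cap(B\times X)=A_{\alpha}\cap(B\times X).
\]
Now $A_{\alpha}$ is Borel: the map $(x,y)\mapsto T_{xy}$ is continuous into the space of trees, $WF_{\alpha}$ is Borel, and $A_{\alpha}$ is exactly the preimage of its complement under this map. Since $B$ is Borel, the intersection $A_{\alpha}\cap(B\times X)$ is Borel, which finishes this direction.

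For the reverse direction I would argue as follows. Suppose $A\cap(B\times X)$ is Borel. The key observation is that
\[
(B\times X)\setminus A=\{(x,y)\ :\ x\in B,\ T_{xy}\in WF\}
\]
is then Borel, being the set difference of the two Borel sets $B\times X$ and $A\cap(B\times X)$. Applying the continuous map $(x,y)\mapsto T_{xy}$ to this Borel set yields an analytic subset of $WF$, so by the boundedness theorem for $WF$ there is some $\alpha<\omega_{1}$ with the image contained in $WF_{\alpha}$; that is, $T_{xy}\in WF$ implies $T_{xy}\in WF_{\alpha}$ for every $x\in B$ and every $y$. To convert this into a bound on the rank, note that for any $\beta$ and any $x$ we always have $A_{x}\subseteq(A_{\beta})_{x}$, since $WF_{\beta}\subseteq WF$; hence $A_{x}=(A_{\beta})_{x}$ holds precisely when every $y$ with $T_{xy}\in WF$ already satisfies $T_{xy}\in WF_{\beta}$. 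Taking $\beta=\alpha$, the previous sentence gives exactly this for every $x\in B$, so $\delta(x)\leq\alpha$, and thus $\delta(x)<\alpha+1$ uniformly on $B$.

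The crux is the reverse direction, and the single step deserving care is the passage from coanalytic to analytic. The set of pairs $(x,y)$ with $x\in B$ and $T_{xy}\in WF$ is only $\mathbf{\Pi_{1}^{1}}$ in general, and its image in $WF$ need not be bounded (indeed $WF$ itself is unbounded). It is precisely the hypothesis that $A\cap(B\times X)$ is Borel that collapses this set to a Borel one, so that its image is analytic and the boundedness theorem for $WF$ becomes available. Everything else is bookkeeping around the definition of $\delta$ and the monotonicity $A_{\beta}\supseteq A$.
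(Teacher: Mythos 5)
Your proof is correct and follows the same route the paper indicates: the paper explicitly says the proposition is proved "using the boundedness theorem for $WF$ in the same way used in the proof of Proposition \ref{boundedness_for_sections}," which is precisely your reverse direction (push the Borel set $(B\times X)\setminus A$ into $WF$ via $(x,y)\mapsto T_{xy}$ and bound it), while your forward direction is the routine computation $A\cap(B\times X)=A_{\alpha}\cap(B\times X)$ using the monotonicity of the $A_{\beta}$.
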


The proof uses the boundedness theorem for $WF$ in the same way used
in the proof of proposition \ref{boundedness_for_sections}.

When one considers square Borel canonization, or Borel canonization
of equivalence relations, the rank $\delta$ has to be relativized:

\begin{defn}
For $x\in\omega^{\omega}$ and $B$ Borel, \textit{the rank of $x$
with respect to $B$} , $\delta_{B}(x)$, is the least $\alpha$ such
that $A_{x}\cap B=(A_{\alpha})_{x}\cap B$, if such an $\alpha$ exists,
and $\infty$ if there is no such $\alpha$.
\end{defn}

\begin{prop}
Let $B\subseteq X$ be a Borel set. The set $\Delta=\{(x,f)\ :\ f\in WO,\ \delta_{B}(x)\leq ot(f)\}$
is $\Pi_{2}^{1}$. The set $\{x\ :\ A_{x}\cap B\ is\ Borel\}$ is
$\Sigma_{3}^{1}$.
\end{prop}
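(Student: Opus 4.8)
The plan is to relativize the proof of Proposition \ref{definability_of_rank}, inserting the Borel predicate ``$z\in B$'' into the relevant quantifier and checking that this does not raise the complexity. Throughout I use that the sequence $A_{\alpha}$ is decreasing with $A\subseteq A_{\alpha}$ for every $\alpha$, which comes from $WF_{\alpha}\subseteq WF$ together with the definitions $(x,y)\in A\Leftrightarrow T_{xy}\notin WF$ and $(x,y)\in A_{\alpha}\Leftrightarrow T_{xy}\notin WF_{\alpha}$.

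First I would unfold the defining condition. Since $A\subseteq A_{ot(f)}$ we always have $A_{x}\cap B\subseteq (A_{ot(f)})_{x}\cap B$, and because $A_{\alpha}$ is decreasing one checks that $\delta_{B}(x)\le ot(f)$ holds if and only if $A_{x}\cap B=(A_{ot(f)})_{x}\cap B$, i.e.\ if and only if the reverse inclusion holds. Rewriting the reverse inclusion pointwise via the definition of $A_{\alpha}$ and taking contrapositives, it becomes exactly
\[
\forall z\ \big(z\in B\ \Rightarrow\ (T_{xz}\in WF\ \Rightarrow\ T_{xz}\in WF_{ot(f)})\big).
\]
Next I would compute the complexity. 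The relation ``$T_{xz}\in WF_{ot(f)}$'', meaning that $T_{xz}$ is wellfounded of rank at most $ot(f)$, is $\Sigma_{1}^{1}$ in $(x,z,f)$: granting $f\in WO$, it holds if and only if there is a map $h\colon T_{xz}\to\mathrm{field}(f)$ with $s\subsetneq t\Rightarrow h(t)<_{f}h(s)$, the existence of such $h$ being $\Sigma_{1}^{1}$, and the rank function witnessing it whenever the rank is bounded. Hence the inner implication $T_{xz}\in WF\Rightarrow T_{xz}\in WF_{ot(f)}$ is $(\neg\Pi_{1}^{1})\vee\Sigma_{1}^{1}=\Sigma_{1}^{1}$; prefixing the Borel condition $z\in B$ keeps it $\Sigma_{1}^{1}$; and the universal quantifier over $z$ yields $\Pi_{2}^{1}$. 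Conjoining with $f\in WO$, which is $\Pi_{1}^{1}$, shows that $\Delta$ is $\Pi_{2}^{1}$.

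For the second claim I would first establish the relativized boundedness statement that $A_{x}\cap B$ is Borel if and only if $\delta_{B}(x)<\infty$. The forward direction runs as in Proposition \ref{boundedness_for_sections}: if $A_{x}\cap B$ is Borel then, $B$ being Borel, the set $\{z\in B:T_{xz}\in WF\}=B\setminus A_{x}$ is Borel, so its image under $z\mapsto T_{xz}$ is an analytic subset of $WF$, which by the boundedness theorem lies inside some $WF_{\alpha}$ with $\alpha<\omega_{1}$; this forces $(A_{\alpha})_{x}\cap B\subseteq A_{x}\cap B$, and with the trivial reverse inclusion gives $A_{x}\cap B=(A_{\alpha})_{x}\cap B$, so $\delta_{B}(x)\le\alpha$. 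The converse is immediate since each $A_{\alpha}$ is Borel and $B$ is Borel. Consequently $A_{x}\cap B$ is Borel if and only if $\exists f\,(x,f)\in\Delta$, which is $\exists f\,\Pi_{2}^{1}=\Sigma_{3}^{1}$.

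The content is entirely in the complexity bookkeeping; the one point that must be handled with care is the $\Sigma_{1}^{1}$ formulation of bounded rank ``$T_{xz}\in WF_{ot(f)}$'' and the verification that guarding it with the Borel predicate $z\in B$ does not raise its complexity. This is precisely where the relativization could in principle cost a quantifier, and the observation that it does not is what makes the relativized bounds coincide with the unrelativized ones of Proposition \ref{definability_of_rank}.
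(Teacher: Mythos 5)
Your proof is correct and takes essentially the same route as the paper: the paper states this relativized proposition without proof precisely because it is the direct relativization of Proposition \ref{definability_of_rank}, and your argument is exactly that relativization, with the key bookkeeping (the $\Sigma_{1}^{1}$ character of ``$T_{xz}\in WF_{ot(f)}$'' given $f\in WO$, the harmlessness of the Borel guard $z\in B$, and the relativized boundedness argument for the second claim) carried out correctly.
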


\begin{prop}
Let $B\subseteq X$ be a Borel set. Then $A\cap(B\times B)$ is Borel
if and only if there is an $\alpha<\omega_{1}$ such that for all
$x\in B$, $\delta_{B}(x)<\alpha$.
\end{prop}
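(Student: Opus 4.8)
The plan is to prove both implications directly, mirroring the proof of Proposition \ref{boundedness_for_sections} and of the preceding rectangular statement, the only change being that both coordinates are now confined to $B$.

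For the easy direction, suppose there is a single $\alpha<\omega_{1}$ with $\delta_{B}(x)<\alpha$ for every $x\in B$. First I would record the monotonicity fact that once $A_{x}\cap B=(A_{\beta})_{x}\cap B$ holds it persists for all larger ordinals: since the $A_{\beta}$ decrease and $A_{x}\subseteq(A_{\beta})_{x}$ for every $\beta$, a squeezing argument gives $A_{x}\cap B=(A_{\beta'})_{x}\cap B$ for all $\beta'\ge\beta$. Applying this with the common bound $\alpha$, I obtain $A\cap(B\times B)=A_{\alpha}\cap(B\times B)$. Since $WF_{\alpha}$ is Borel for each countable $\alpha$, the set $A_{\alpha}=\{(x,y):T_{xy}\notin WF_{\alpha}\}$ is Borel, and intersecting with the Borel set $B\times B$ keeps it Borel.

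For the converse, assume $A\cap(B\times B)$ is Borel and run the boundedness argument from Proposition \ref{boundedness_for_sections} one dimension higher. The set $({}^{\sim}A)\cap(B\times B)=\{(x,y)\in B\times B:T_{xy}\in WF\}$ is then Borel, so its image under the Borel map $(x,y)\mapsto T_{xy}$ is an analytic subset of $WF$. By the boundedness theorem there is a countable $\alpha$ with this image contained in $WF_{\alpha}$, which means that for all $x,y\in B$ we have $T_{xy}\in WF\Leftrightarrow T_{xy}\in WF_{\alpha}$, i.e.\ $(x,y)\in A\Leftrightarrow(x,y)\in A_{\alpha}$. Hence $A_{x}\cap B=(A_{\alpha})_{x}\cap B$ for every $x\in B$, so $\delta_{B}(x)\le\alpha<\alpha+1$ uniformly, as required.

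There is no real obstacle here beyond bookkeeping: the statement is the square analogue of the rectangular case, and both halves are short. The one point worth checking carefully is that the relativized rank $\delta_{B}$ is exactly calibrated to the restriction to $B\times B$ --- the boundedness theorem delivers a bound on the pairs with \emph{both} coordinates in $B$, and this is precisely the quantity that $\delta_{B}(x)$ measures for $x\in B$, so the uniform bound transfers without loss.
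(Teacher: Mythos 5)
Your proof is correct and follows the same route the paper intends: the forward direction is the boundedness theorem for $WF$ applied to the analytic image of $(B\times B)\setminus A$ under $(x,y)\mapsto T_{xy}$, and the converse is the observation that a uniform bound $\alpha$ gives $A\cap(B\times B)=A_{\alpha}\cap(B\times B)$ with $A_{\alpha}$ Borel. The paper leaves this square version unproved precisely because it is the same boundedness argument as in Proposition \ref{boundedness_for_sections}, which is what you carried out.
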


We remark that the rank is not canonical and depends on the choice
of the tree $T$. However, all we will need for our results is the
mere existence of such a rank.

\begin{example}
Analytic equivalence relations with Borel classes are convenient examples
of analytic subsets of the plane with Borel sections. Let us find
a rank for two such equivalence relations:
\begin{enumerate}
\item Given $x,y\in LO$ linear orders: 
\[
xE_{\omega_{1}}y\Leftrightarrow(x,y\notin WO)\vee(ot(x)=ot(y)).
\]
We fix a tree $T$ inducing $E_{\omega_{1}}$ : $(x,y,f)$ is in $T$
if and only if either $f$ is an isomorphism between $x$ and $y$
or $f$ codes two $\omega$-decreasing sequences -- one in $x$ and
the other in $y.$ It is not hard to show that $x\in WO_{\alpha}\Rightarrow\delta(x)=\alpha$,
and $x\notin WO\Rightarrow\delta(x)=\infty.$ 
\item Let $A\subseteq\omega^{\omega}$ be a strictly analytic set. Given
$(x_{1},x_{2}),(y_{1},y_{2})\in\omega^{\omega}\times\{0,1\}$, $(x_{1},x_{2})E(y_{1},y_{2})$
if and only if 
\[
\left(x_{1}=y_{1}\in A\right)\vee\left((x_{1},x_{2})=(y_{1},y_{2})\right)
\]
 The equivalence relation $E$ is strictly analytic and all its classes are finite. Fix
a tree $T$ such that
\[
x\in^{\sim}A\Leftrightarrow T_{x}\in WF
\]
and for each $\alpha$ countable, let
\[
x\in B_{\alpha}\Leftrightarrow T_{x}\in WF_{\alpha}.
\]
The sets $B_{\alpha}$ are Borel and $^{\sim}A=\bigcup_{\alpha<\omega_{1}}B_{\alpha}.$

We can now use the tree $T$ to define another tree inducing the equivalence
relation $E$, and consider the rank associated with the new tree.
It is then easy to see that $x\in B_{\alpha}\Rightarrow\delta(x)=\alpha$,
and $x\in A\Rightarrow\delta(x)=\omega$. 

That example shows that the rank might be arbitrarily higher than the complexity
of the equivalence class.
\end{enumerate}
\end{example}

\subsection{Rectangular Borel canonization of Proper ideals}

Having those definitions in mind, one can try and prove rectangular
Borel canonization of proper ideals in the following way: 

\begin{itemize}
\item Fix a countable elementary submodel $M\preceq H_{\theta}$ for $\theta$
large enough, and force with $\mathbb{P}_{I}$ over $M$.
\item Show that $A_{x_{G}}$ is Borel in $M[x_{G}]$ and so
\[
M[x_{G}]\models\delta(x_{G})\leq\alpha
\]
for some $\alpha<\omega_{1}^{M[x]}=\omega_{1}^{M}$ (recall that $\mathbb{P}_{I}$
preserves $\omega_{1}$).
\item Use absoluteness to show that $\mathbb{V}\models\delta(x_{G})\leq\alpha$.
\item Use properness to guarantee that the set of $M$-generics is $I$-positive, and the above arguments to conclude that all of them
has rank less than $\omega_{1}^{M}<\omega_{1}.$
\end{itemize}

However, the 2nd and 3rd steps are in general impossible. Although
$A$ has only Borel sections, that statement is $\Pi_{4}^{1}$ (see
proposition \ref{definability_of_rank}), hence one must work harder
to show its preservation. The 3rd step provides us with another absoluteness
challenge, since $\Pi_{2}^{1}$ absoluteness between a submodel $N$ and the universe is guaranteed when $N$ contains all countable
ordinals, whereas $M[x_{G}]$ is countable.

The following proof follows the above lines and takes advantage of
the measurable cardinal to overcome the above mentioned difficulties.
We remind that by a theorem of Martin and Solovay (15.6 in \cite{kanamori}),
when there is a measurable cardinal $\kappa$, forcing notions of
cardinality less than $\kappa$ preserve $\Sigma_{3}^{1}$
statements.

\begin{thm}
Assume a measurable cardinal exists. Then proper ideals have rectangular Borel
canonization and ccc ideals have strong rectangular Borel canonization.
\end{thm}

\begin{proof}
The idea is as follows: given $U$ a $\kappa$-complete ultrafilter
on $\kappa$, one can form iterated ultrapowers of the universe, $\mathbb{V_{\alpha}},$
all well founded by a theorem of Gaifman. The same operation can be
applied on $M$ a countable elementary submodel of the universe such
that $U\in M$. Since the sequence $j^{(\alpha)}(\kappa)$ is increasing
and continuous, $M_{\omega_{1}},$ the $\omega_{1}'th$ iterated ultrapower
of $M$, contains all countable ordinals, so that $M_{\omega_{1}}$
and the universe agree on $\mathbf{\Pi_{2}^{1}}$ statements. On the
other hand, $M_{\omega_{1}}$ is an iterated ultrapower of $M$, so
they agree on all statements -- there is an elementary embedding between
them. We will then have enough absoluteness to conclude the proof.

So let $M\preceq H_{\theta}$ for $\theta$ large enough be a countable elementary submodel such that $\kappa\in M$
is measurable and $M$ contains all the relevant information. Fix $U\in M$ a $\kappa$-complete
ultrafilter on $\kappa$, and force with
$\mathbb{P}_{I}$ over $M$. Levy-Solovay theorem guarantees that $U$
remains a $\kappa$-complete ultrafilter in $M[x_{G}]$. For convenience,
denote $M[x_{G}]$ by $N$, remembering that $\omega_{1}^{N}=\omega_{1}^{M}$
because $\mathbb{P}_{I}$ is proper. We can then use $U$ to iterate
ultrapowers of both $\mathbb{V}$ and $N$ over all ordinals. Denote
by $\mathbb{V}_{\alpha}$ and $N_{\alpha}$ the $\alpha'th$ iterated
ultrapowers of $\mathbb{V}$ and $N$, respectively. The $V_{\alpha}'s$
are well-founded, and since $N_{\alpha}\subseteq\mathbb{V}_{\alpha}$,
the $N_{\alpha}'s$ are well founded as well, so we identify them
with their transitive collapses. Since $(j_{\alpha})^{N}(\kappa)$
is a normal sequence, $N_{\omega_{1}}$ has all countable ordinals.
Hence, as stated above, $N_{\omega_{1}}$ and $N$ are elementarily
equivalent, and $N_{\omega_{1}}$ and $\mathbb{V}$ are $\mathbf{\Pi_{2}^{1}}$
equivalent.

By the assumption, $\mathbb{V}\models A_{x_{G}}\ Borel$, and so there
is a countable ordinal $\alpha$ such that $\mathbb{V}\models\delta(x_{G})\leq\alpha$.
We would like this statement to be true in $N_{\omega_{1}}$, but
it is meaningless there: Although $\alpha$ is an element of $N_{\omega_{1}}$, it is not necessarily countable in $N_{\omega_{1}}$. The natural
solution will be collapsing $\alpha$ over $N_{\omega_{1}}$. The
resulting model, $N_{\omega_{1}}[Coll(\omega,\alpha)]$, still contains
all ordinals countable in $\mathbb{V},$ and also knows that $\alpha$
is countable, so we can finally reflect the statement $\delta(x_{G})\leq\alpha$
to get that $N_{\omega_{1}}[coll(\omega,\alpha)]\models\delta(x_{G})\leq\alpha$
and 
\[
N_{\omega_{1}}[coll(\omega,\alpha)]\models A_{x_{G}}\ Borel.
\]
 Note that in $N_{\omega_{1}},$ $\alpha$ is under a measurable cardinal,
hence by Martin-Solovay's theorem, collapsing $\alpha$ over $N_{\omega_{1}}$
preserves $\Sigma_{3}^{1}$ statements. Proposition \ref{definability_of_rank}
then assures that $N_{\omega_{1}}\models A_{x_{G}}\ Borel.$ Since
$N_{\omega_{1}}$ is elementarily equivalent to $N$, we have so far
shown that
\[
N\models A_{x_{G}}\ Borel,
\]
which means that $N\models\delta(x_{G})\leq\alpha$ for some $\alpha<\omega_{1}^{N}=\omega_{1}^{M}$.
Another use of the elementary equivalence of $N$ and $N_{\omega_{1}}$
proves that $N_{\omega_{1}}\models\delta(x_{G})\leq\alpha$, from
which $\Sigma_{2}^{1}$ absoluteness guarantees
\[
\mathbb{V}\models\delta(x_{G})\leq\alpha<\omega_{1}^{M}.
\]
Taking $B$ to be the set of $M$-generics concludes the proof.
Notice that if $I$ is ccc, $B$ is co-$I$.
\end{proof}

\subsection{Rectangular Borel canonization of provably ccc ideals}

We follow Stern's definitions and results from \cite{stern} . By
an $\alpha$-Borel code, for $\alpha$ a not necessarily countable
ordinal, we mean a well founded tree on $\alpha$ whose maximal points are associated
with basic open sets. An $\alpha$-Borel code naturally codes a
set generated from basic open sets by unions and intersections of
length at most $\alpha$. If $\alpha$ is countable, the set coded
by an $\alpha$-Borel code is Borel.

For a countable ordinal $\gamma<\omega_{1}$ , $L[\gamma]$ stands
for $L[a]$ where $a$ codes a well order of $\omega$ of order type
$\gamma$.

\begin{thm}
(Stern \cite{stern}) If $A$ is $\mathbf{\Pi}_{\gamma}^{0}\cap\Pi_{1}^{1}(z)$
, then $L[z,\gamma]$ has an $\omega_{\gamma}^{L[z,\gamma]}$-Borel
code for $A$.
\end{thm}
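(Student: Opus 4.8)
The plan is to use the $\Pi_1^1(z)$ representation together with the canonical Borel approximations coming from the tree, and to reduce the whole statement to a single boundedness estimate. Fix a tree $T$, recursive in $z$, with $x\in A\Leftrightarrow T_x\in WF$; then $T\in L[z]\subseteq L[z,\gamma]$. For an ordinal $\xi$ write $B_\xi=\{x:\|T_x\|<\xi\}$, where $\|T_x\|$ denotes the rank of the well-founded tree $T_x$. Each $B_\xi$ carries a canonical $\xi$-Borel code $c_\xi$ built uniformly from $T$ and $\xi$, and both the construction of $c_\xi$ and the membership relation $x\in[c_\xi]$ are absolute between transitive models containing $T$ and $\xi$, since well-foundedness and rank comparison are absolute. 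As $A=\bigcup_{\xi<\omega_1}B_\xi$, the natural candidate is the code $c$ obtained by placing a single union node above the codes $c_\xi$ for $\xi<\omega_\gamma^{L[z,\gamma]}$. Because $T,\gamma\in L[z,\gamma]$ and the $c_\xi$ are produced uniformly, $c$ is an $\omega_\gamma^{L[z,\gamma]}$-Borel code lying in $L[z,\gamma]$, and evaluating it in $\mathbb{V}$ yields $[c]^{\mathbb{V}}=\{x:\|T_x\|<\omega_\gamma^{L[z,\gamma]}\}$.

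With this setup the theorem reduces to the boundedness claim: for every $x\in A$ one has $\|T_x\|<\omega_\gamma^{L[z,\gamma]}$. Granting it, every $x\in A$ lies in $[c]^{\mathbb{V}}$, while every $x\notin A$ has $T_x\notin WF$ and is excluded; hence $[c]^{\mathbb{V}}=A$ and $c$ is the desired code. The entire content of the theorem is therefore this estimate, which pins the Borel rank $\gamma$ to the constructible cardinal $\omega_\gamma^{L[z,\gamma]}$.

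I would prove the boundedness claim by induction on $\gamma$. The base level concerns a closed set that is $\Pi_1^1(z)$: its complement is open and $\Sigma_1^1(z)$, the set of basic neighbourhoods contained in that complement is a real defined by a $\Pi_2^1(z)$ condition, so by Shoenfield absoluteness it already lies in $L[z]$; applying effective $\Sigma_1^1$-boundedness relative to $z$ then places the tree-ranks on $A$ below $\omega_1^{CK,z}\le\omega_1^{L[z]}$. For the successor and limit steps one writes $A=\bigcap_n A_n$ with $A_n\in\mathbf{\Sigma}_{\beta_n}^{0}$, $\beta_n<\gamma$, and uses that a countable intersection of sets whose tree-ranks are bounded below the cardinals $\omega_{\beta_n}^{L[z,\gamma]}$ has tree-ranks bounded below $\omega_\gamma^{L[z,\gamma]}$; this is where it matters that $\omega_\gamma^{L[z,\gamma]}$ is a cardinal of $L[z,\gamma]$ and hence closed under the suprema generated by the countable Borel operations.

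The main obstacle is that the components $A_n$ of a boldface Borel decomposition need not be $\Pi_1^1(z)$, so the canonical tree and the induction hypothesis do not apply to them verbatim; this is exactly where the parameter $\gamma$ and the model $L[z,\gamma]$ are forced upon us. I would resolve it by an effectivization of Louveau type carried out inside $L[z,\gamma]$: a boldface $\mathbf{\Sigma}_{\beta}^{0}$ set that is $\Sigma_1^1$ in a real of $L[z,\gamma]$ admits a Borel code lightface in that real and in $\gamma$, hence lies in $L[z,\gamma]$ together with a canonical tree to which the induction applies. Establishing this effectivization -- equivalently, computing the Borel rank of $A$ as the closure ordinal of its associated monotone $\Pi_1^1$ operator as evaluated in $L[z,\gamma]$ -- is the delicate heart of the argument, and it is where Stern's analysis does its real work; once it is in place, the boundedness claim, and hence the theorem, follow.
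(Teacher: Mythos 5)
The paper does not prove this statement; it is imported verbatim from Stern \cite{stern}, so your attempt can only be judged on its own merits and against Stern's actual argument. Your reduction to a rank--boundedness claim is sound: if every $x\in A$ satisfies $\|T_x\|<\omega_{\gamma}^{L[z,\gamma]}$, then the union of the canonical codes $c_{\xi}$ for $\xi<\omega_{\gamma}^{L[z,\gamma]}$ is an $\omega_{\gamma}^{L[z,\gamma]}$-Borel code for $A$ lying in $L[z,\gamma]$, by absoluteness of rank computations. Your base case is essentially right but the bound is misstated: a closed $\Pi_{1}^{1}(z)$ set need not be $\Sigma_{1}^{1}(z)$ (consider the singleton of Kleene's $O$), so effective boundedness ``relative to $z$'' does not yield $\omega_{1}^{CK,z}$. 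You must instead apply boundedness relative to the pair $(z,S)$, where $S=\{s:N_{s}\cap A\neq\emptyset\}$ is the tree of the closed set, which lies in $L[z]$ by Shoenfield absoluteness of the $\Sigma_{2}^{1}(z)$ statement defining it; this gives $\sup_{x\in A}\|T_{x}\|<\omega_{1}^{CK,(z,S)}<\omega_{1}^{L[z]}$, which is what you need.

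The genuine gap is the inductive step, and you have in effect conceded it. Writing $A=\bigcap_{n}A_{n}$ with $A_{n}\in\mathbf{\Sigma}_{\beta_{n}}^{0}$ produces sets to which neither the tree $T$ nor the induction hypothesis attaches: the $A_{n}$ are arbitrary boldface sets whose codes live in $\mathbb{V}$ and need bear no relation to $L[z,\gamma]$, and the phrase ``a countable intersection of sets whose tree-ranks are bounded'' has no content because the $A_{n}$ carry no tree-ranks. The tool that repairs this is Louveau's separation theorem --- given that the $\Pi_{1}^{1}(z)$ set $A$ is contained in a $\mathbf{\Sigma}_{\beta}^{0}$ set, one interpolates a $\Sigma_{\beta}^{0}$ set whose code lies in a controlled effective class --- together with its transfinite extension past $\omega_{1}^{CK,z}$, carried out over the admissible levels of $L[z,\gamma]$; the cardinals $\omega_{\beta}^{L[z,\gamma]}$ enter by counting the possible codes available at each level of the hierarchy, not through any closure of cardinals under countable Borel operations as you suggest. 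That effectivized separation is the entire content of Stern's theorem, and your proposal explicitly defers it, so what you have is a correct reformulation plus a base case rather than a proof.
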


\begin{prop}
Let $A$ be a $\Sigma_{1}^{1}(z)$ subset of the plane with $\mathbf{\Pi_{\gamma}^{0}}$
sections. Let $I$ be a $\sigma$-ideal proper in $L[z,\gamma]$, and $x$ generic over $L[z,\gamma]$.
Then
\[
\delta(x)<\omega_{\gamma+1}^{L[z,\gamma]}.
\]
\end{prop}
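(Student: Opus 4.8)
The plan is to reduce the claim, which concerns $\delta(x)$ as computed in $\mathbb{V}$, to an internal computation in $L[z,\gamma][x]$ that is then transferred upward by Shoenfield absoluteness. Since $\mathbb{P}_{I}$ is proper in $L[z,\gamma]$ and $x$ is $\mathbb{P}_{I}$-generic over $L[z,\gamma]$, forcing preserves $\omega_{1}$, so $\omega_{1}^{L[z,\gamma][x]}=\omega_{1}^{L[z,\gamma]}$. By Proposition~\ref{definability_of_rank}, for a well-order code $f$ the relation $\delta(x)\le ot(f)$ is $\Pi_{2}^{1}(x,f)$; hence it suffices to produce a code $f\in L[z,\gamma][x]$ with $ot(f)<\omega_{\gamma+1}^{L[z,\gamma]}$ such that $L[z,\gamma][x]\models\delta(x)\le ot(f)$. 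As $L[z,\gamma][x]$ is an inner model containing the parameters $z,x,f$, Shoenfield absoluteness then yields $\mathbb{V}\models\delta(x)\le ot(f)<\omega_{\gamma+1}^{L[z,\gamma]}$, which is the desired conclusion.

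To bound the rank internally, I unwind the definition: $\delta(x)$ is, up to one, the supremum of the $WF$-ranks of the trees $T_{xy}$ as $y$ ranges over ${}^{\sim}A_{x}=\{y:T_{xy}\in WF\}$, so it is enough to bound these ranks. By hypothesis $A_{x}$ is $\mathbf{\Pi_{\gamma}^{0}}$, so ${}^{\sim}A_{x}$ is $\mathbf{\Sigma_{\gamma}^{0}}$, and as a section of the $\Pi_{1}^{1}(z)$ set ${}^{\sim}A$ it is also $\Pi_{1}^{1}(z,x)$; thus ${}^{\sim}A_{x}\in\mathbf{\Pi_{\gamma+1}^{0}}\cap\Pi_{1}^{1}(z,x)$. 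Stern's theorem (\cite{stern}) then supplies a Borel code for ${}^{\sim}A_{x}$ whose length is an $L$-cardinal of index $\gamma+1$, and the boundedness theorem for $WF$, applied to this coded subset of $WF$ exactly as in the proof of Proposition~\ref{boundedness_for_sections}, converts the length of the code into the sought bound $\delta(x)<\omega_{\gamma+1}^{L[z,\gamma]}$.

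Two linked points are the heart of the argument and the main obstacle. First, applied with parameter $(z,x)$, Stern's theorem places the code in $L[z,x,\gamma]$, whose cardinals only dominate those of $L[z,\gamma]$; a naive application would therefore yield the weaker bound $\omega_{\gamma+1}^{L[z,x,\gamma]}\ge\omega_{\gamma+1}^{L[z,\gamma]}$. Second, one must know that $A_{x}$ is still $\mathbf{\Pi_{\gamma}^{0}}$ as computed in the inner model — otherwise the internal rank is $\infty$ and no witness $f$ exists — yet the assertion that all sections are $\mathbf{\Pi_{\gamma}^{0}}$ is $\Pi_{4}^{1}$ and does not reflect by Shoenfield alone. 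I would resolve both along the lines of Ikegami's Lemma~\ref{main_lemma_ikegami}: using the genericity of $x$ over $L[z,\gamma]$ and the forcing relation for $\mathbb{P}_{I}$, one obtains a ground-model name $\dot{c}\in L[z,\gamma]$ forced to be a $\mathbf{\Pi_{\gamma}^{0}}$-code for $A_{x_{G}}$, built entirely from ordinals below $\omega_{\gamma+1}^{L[z,\gamma]}$; evaluating $\dot{c}$ at $x$ produces a code of length a genuine $L[z,\gamma]$-cardinal, simultaneously certifying that the section is $\mathbf{\Pi_{\gamma}^{0}}$ in $L[z,\gamma][x]$ and pinning the rank bound to $L[z,\gamma]$ rather than $L[z,x,\gamma]$. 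Securing such a name — that is, reflecting the sectional complexity into the ground model $L[z,\gamma]$, with Stern's theorem controlling the ordinal length of the code by $\gamma$ — is the step I expect to require the most work.
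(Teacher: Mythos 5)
Your skeleton---compute a bound on $\delta(x)$ inside an inner model containing $x$ and transfer it to $\mathbb{V}$ by Shoenfield absoluteness, with Stern's theorem supplying a long Borel code for the section---is exactly the paper's. But the step you flag as the main obstacle and explicitly leave open, namely replacing the bound $\omega_{\gamma+1}^{L[z,x,\gamma]}$ by $\omega_{\gamma+1}^{L[z,\gamma]}$, is where the proposal stops short of a proof, and the machinery you propose for it (a ground-model name $\dot{c}\in L[z,\gamma]$ forced to be a code for $A_{x_{G}}$, in the style of Lemma \ref{main_lemma_ikegami}) is not needed. The resolution is a short cardinal computation using the genericity hypothesis you already have: since $x$ is $\mathbb{P}_{I}$-generic over $L[z,\gamma]$, the model $L[z,x,\gamma]$ is sandwiched between $L[z,\gamma]$ and $L[z,\gamma][x]$, and since $\mathbb{P}_{I}$ does not collapse the relevant cardinals of $L[z,\gamma]$ one gets $\omega_{\gamma+1}^{L[z,x,\gamma]}\leq\omega_{\gamma+1}^{L[z,\gamma][x]}=\omega_{\gamma+1}^{L[z,\gamma]}$. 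So the ``naive'' application of Stern with parameter $(z,x)$ already lands below the right ordinal. Your second worry---that the inner model must be seen to satisfy ``$A_{x}$ is $\mathbf{\Pi_{\gamma}^{0}}$''---is likewise moot: Stern's theorem is applied in $\mathbb{V}$, where the hypothesis on the single section $A_{x}$ holds outright, and its conclusion is precisely that the code already lies in $L[z,x,\gamma]$; no reflection of the $\Pi_{4}^{1}$ statement ``all sections are $\mathbf{\Pi_{\gamma}^{0}}$'' is required.

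A second, smaller gap: you pass from ``a long Borel code whose length is an $L$-cardinal of index $\gamma+1$'' to ``the boundedness theorem converts the length of the code into the bound on $\delta(x)$'' without an argument. Boundedness applied in $\mathbb{V}$ to the analytic subset $\{T_{xy}\ :\ (x,y)\notin A\}$ of $WF$ only yields $\delta(x)<\omega_{1}$, i.e.\ Proposition \ref{boundedness_for_sections} again. The paper extracts the finer bound by collapsing $\omega_{\gamma}^{L[z,x,\gamma]}$ over $L[z,x,\gamma]$: in the collapse extension the code becomes countable, so $A_{x}$ is Borel there, and the boundedness argument run inside that model---whose $\omega_{1}$ is $\omega_{\gamma+1}^{L[z,x,\gamma]}$---gives $\delta(x)\leq\alpha$ for some $\alpha<\omega_{\gamma+1}^{L[z,x,\gamma]}\leq\omega_{\gamma+1}^{L[z,\gamma]}$, which Shoenfield then lifts to $\mathbb{V}$. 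With these two repairs your argument becomes the paper's proof.
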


\begin{proof}
Since $\mathbb{V}\models A_{x}\ is\ \mathbf{\Pi_{\gamma}^{0}}$, using
Stern's theorem we know that $L[z,x,\gamma]\models A_{x}\ is\ \omega_{\gamma}^{L[z,x,\gamma]}-Borel.$
Collapsing $\omega_{\gamma}^{L[z,x,\gamma]}$ over $L[z,x,\gamma]$
, we have: 
\[
L[z,x,\gamma][Coll(\omega,\omega_{\gamma}^{L[z,x,\gamma]}]\models A_{x}\ Borel.
\]

$\omega_{1}$ of the new model is $\omega_{\gamma+1}^{L[z,x,\gamma]}$.
Since $x$ is assumed to be $L[z,\gamma]$-generic and $\mathbb{P}_{I}$
doesn't collapse cardinals in $L[z,\gamma]$: 
\[
\omega_{\gamma+1}^{L[z,\gamma]}=\omega_{\gamma+1}^{L[z,\gamma][x]}.
\]
Hence there must be an $\alpha<\omega_{\gamma+1}^{L[z,\gamma]}$ such
that
\[
L[z,x,\gamma][Coll(\omega,\omega_{\gamma}^{L[z,x,\gamma]}]\models\delta(x)\leq\alpha.
\]
Shoenfield's absoluteness concludes the proof.
\end{proof}

\begin{thm}
Assume $\omega_{1}$ is inaccessible to the reals, and $I$ is ccc
in $L[z]$ for any real $z$. Then $I$ has strong rectangular Borel canonization
of analytic sets all of whose sections are $\mathbf{\Pi_{\gamma}^{0}}$ for some
$\gamma<\omega_{1}$.
\end{thm}

Note that part of the assumption here is that $I$ is defined
and a $\sigma$-ideal in $L[z]$ for any real $z$.

\begin{proof}
Let $A$ be a $\Sigma_{1}^{1}(z)$ set with $\mathbf{\Pi_{\gamma}^{0}}$
sections. Since $I$ is ccc in $L[z,\gamma]$, the set of generics over $L[z,\gamma]$
is co-$I$. $\omega_{1}$ is inaccessible in $L[z,\gamma]$, so
that in particular $\omega_{\gamma+1}^{L[z,\gamma]}<\omega_{1}.$
The previous proposition then concludes the proof.
\end{proof}

\section{\label{sec:Examples-and-Counterexamples}Examples and Counterexamples}

The following section elaborates on Borel canonization of equivalence
relations in its most general form:

\selectlanguage{american}%
\begin{problem}
Given an analytic equivalence relation $E$ on a Polish space $X$
and a $\sigma$-ideal $I$, does there exist an $I$-positive
Borel set $B$ such that $E$ restricted to $B$ is Borel?
\end{problem}

As mentioned before, in general the answer is negative. We will list
a few examples and counterexamples we find interesting.

\subsection{Non Borel classes and improper ideals}

\begin{example}
\label{cex-non Borel classes}(Kanovei, Sabok, Zapletal \cite{ksz})
Fix $K$ a strictly analytic Borel ideal on $\omega$ and define an
equivalence relation on $(2^{\omega})^{\omega}$ by:
\[
\bar{x}E\bar{y}\Leftrightarrow\{n\ :\ \bar{x}(n)\neq\bar{y}(n)\}\in K.
\]
Not only that $E$ is non Borel -- one can easily show that none of
its classes is Borel. Now consider the following $\sigma$-ideal
$I$ on $(2^{\omega})^{\omega}$ : $A\notin I$ if $A$ does not contain
a set of the form $\Pi_{n\in\omega}P_{n}$ for $P_{n}$ perfect sets.
In \cite{ksz} it is shown that $I$ is a $\sigma$-ideal, and
even a proper one. However, for any $B$ Borel and $I$-positive,
$E$ can be reduced to $E\restriction_{B}$ (since $B$ contains a
copy of the whole space). In particular, $E\restriction_{B}$ is not
Borel.
\end{example}

The above example clarifies why we assume all classes are Borel, and
why that assumption is not redundant even when one assumes the properness
of the ideal $I$.

\begin{example}
\label{cex-orbit-er}Let $E$ be an analytic and non Borel \textbf{orbit
equivalence relation} on $\omega^{\omega}$ . Consider the following
$\sigma$-ideal : $C\in I$ if there is $B\supseteq C$ Borel such
that $E\restriction_{B}$ is Borel. 
\begin{enumerate}
\item $\omega^{\omega}\notin I$, since $E$ is non Borel. Trivially enough,
$\emptyset\in I$.
\item $I$ is downward closed.
\item To show that $I$ is $\sigma$-closed , consider $\langle C_{n}\ :\ n\in\omega\rangle$
a sequence of sets in $I$, and let $\langle B_{n}\ :\ n\in\omega\rangle$
Borel such that $B_{n}\supseteq C_{n}$ and $E\restriction_{B_{n}}$
is Borel. Since $\bigcup B_{n}\supseteq\bigcup C_{n}$, we will be
satisfied showing that $E\restriction_{\bigcup B_{n}}$ is Borel.
That follows easily from Hjorth analysis (see \cite{hjorth-paper,my-hjorth}):
Let $\delta_{n}$ be some countable ordinal bounding the Hjorth rank
on $B_{n}$. Then $sup_{n}(\delta_{n})$ bounds the rank on $\bigcup_{n}B_{n}$,
hence $E\restriction_{\bigcup B_{n}}$ is Borel.
\item There is no Borel canonization of $E$ with respect to $I$: if $B$
is Borel and $I$ positive then by the very definition of $I$, $E\restriction_{B}$
is non Borel.
\end{enumerate}
\end{example}

\begin{rem}
In 3 we have used the fact that $E$ is an orbit equivalence relation.
We conjecture it is not necessarily true for general analytic and
non Borel equivalence relations.
\end{rem}

In example \ref{cex-orbit-er}, Borel canonization fails although
all classes are Borel. However, the $\sigma$-ideal considered here
is non proper. The sets $\mathcal{A}_{\alpha}=\{x\ :\ \delta(x)\leq\alpha\}$
,where $\delta$ is the Hjorth rank, are Borel and $I$-small.
Hence a generic element will have rank greater or equal than $\omega_{1}$,
clearly collapsing $\omega_{1}.$ That example thus indicates the
necessity of assuming the properness of the $\sigma$-ideal $I$.

\subsection{Perfect set properties of equivalence relations}

\begin{defn}
Let $E$ be an equivalence relation on a Polish space $X$. $E$ has
perfectly many classes if there is a perfect set $P\subseteq X$ of
pairwise inequivalent elements.
\end{defn}

One of the most well known results in the study of equivalence relations
in set theory is the following theorem due to Silver:

\begin{thm}
(Silver) Let $E$ be a coanalytic equivalence relation on a Polish
space $X$. Then either $E$ has countably many classes, or it has
perfectly many classes.
\end{thm}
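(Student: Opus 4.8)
The plan is to give Harrington's effective-descriptive-set-theory proof via the Gandy--Harrington topology, which fits the methods already in play in this paper. First I would reduce to the lightface case: since $E$ is coanalytic it is $\Pi_1^1(z)$ for some real parameter $z$, and after relativizing all the effective notions below to $z$ I may assume $E$ is (lightface) $\Pi_1^1$ on $X=\omega^\omega$. The complement relation $\neg E=\{(x,y):\lnot(xEy)\}$ is then $\Sigma_1^1$. Throughout I work with the \emph{Gandy--Harrington topology} $\tau$ on $\omega^\omega$, generated by the lightface $\Sigma_1^1$ sets as a basis; its product topology on $\omega^\omega\times\omega^\omega$ is likewise generated by products of $\Sigma_1^1$ sets, so $\neg E$ is $\tau$-open.

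Call a $\Sigma_1^1$ set \emph{thin} if it meets only countably many $E$-classes, let $W$ be the union of all lightface thin $\Sigma_1^1$ sets, and set $\Omega=\omega^\omega\setminus W$. Since there are only countably many lightface $\Sigma_1^1$ sets, $W$ itself meets only countably many $E$-classes, and this already produces the first horn of the dichotomy: if $\Omega=\emptyset$ then $\omega^\omega=W$ meets only countably many classes, so $E$ has countably many classes. The substantive case is $\Omega\neq\emptyset$, where I must produce a perfect set of pairwise $E$-inequivalent reals. The useful feature of the definition is that any $\Sigma_1^1$ set $V$ with $V\cap\Omega\neq\emptyset$ is non-thin (otherwise its points would lie in $W$), hence meets uncountably many classes and in particular contains two $E$-inequivalent points of $\Omega$.

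The construction is a Cantor scheme fused in $\tau$. I would build $\Sigma_1^1$ sets $A_s$ for $s\in 2^{<\omega}$, each meeting $\Omega$, with $A_{s0},A_{s1}\subseteq A_s$, with the diameters in the original Polish metric tending to $0$ along each branch, with nested $\tau$-closures $\overline{A_{si}}^{\,\tau}\subseteq A_s$, and crucially with $A_{s0}\times A_{s1}\cap E=\emptyset$. The splitting step is the heart: given $A_s$ meeting $\Omega$ it is non-thin, so it contains $x,y\in\Omega$ with $\lnot(xEy)$; since $\neg E$ is $\tau$-open I can separate the pair by $\tau$-basic (that is, $\Sigma_1^1$) sets $A_{s0}\ni x$ and $A_{s1}\ni y$ inside $A_s$ with $A_{s0}\times A_{s1}\subseteq\neg E$, each still meeting $\Omega$ so that the recursion continues. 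The second pillar is the \emph{strong Choquet} property of the Gandy--Harrington topology on $\Omega$, which guarantees that along each branch $b\in 2^\omega$ the decreasing sequence of nonempty $\Sigma_1^1$ sets $A_{b\restriction n}$ with nested $\tau$-closures has nonempty intersection; together with the shrinking diameters this pins down a single point $x_b$. The map $b\mapsto x_b$ is continuous and injective, and if $b\neq b'$ first split at a node $s$ then $x_b\in A_{s0}$ and $x_{b'}\in A_{s1}$, whence $\lnot(x_bEx_{b'})$, so $\{x_b:b\in 2^\omega\}$ is the desired perfect set.

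The two steps needing genuine effective input, and hence the main obstacles, are (i) that a $\Sigma_1^1$ set meeting uncountably many classes really contains two inequivalent points lying in $\Omega$, so that the $\tau$-open set $\neg E$ can be used to separate them inside $\Omega$, and (ii) the strong Choquet property of $(\Omega,\tau)$ that powers the fusion and lets me arrange the nested-closure condition; both rest on the Gandy basis theorem and the standard analysis of the Gandy--Harrington topology, which I would invoke as known. I would also record, for bookkeeping, that \emph{thin} may be replaced by \emph{countable} via the perfect set property for $\Sigma_1^1$ sets, and that the exact pointclass of $\Omega$ is immaterial: only the covering property of $W$ and the splitting-and-fusion mechanism are used.
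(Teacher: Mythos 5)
You have chosen the right framework (Harrington's proof via the Gandy--Harrington topology; note the paper itself only quotes Silver's theorem without proof), and the first half of your argument is correct: the covering set $W$, the countable-classes horn when $\Omega=\emptyset$, and the observation that any $\Sigma_1^1$ set meeting $\Omega$ contains two $E$-inequivalent points of $\Omega$ are all fine. The gap is in the sentence ``its product topology on $\omega^\omega\times\omega^\omega$ is likewise generated by products of $\Sigma_1^1$ sets, so $\neg E$ is $\tau$-open,'' and in the splitting step that relies on it. A $\Sigma_1^1$ subset of the plane is open in the Gandy--Harrington topology \emph{of the plane} (the one generated by all $\Sigma_1^1$ subsets of $(\omega^\omega)^2$), but that topology is strictly finer than the product $\tau\times\tau$ of the two line topologies, whose basic open sets are rectangles $A\times B$ with $A,B\in\Sigma_1^1(\omega^\omega)$; a $\Sigma_1^1$ subset of the plane is in general \emph{not} a union of such rectangles. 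Consequently, from $x,y\in A_s\cap\Omega$ with $\lnot(xEy)$ you cannot conclude that there are $\Sigma_1^1$ sets $A_{s0}\ni x$ and $A_{s1}\ni y$ with $A_{s0}\times A_{s1}\subseteq\lnot E$. This is not a bookkeeping issue: it is precisely the main lemma of Harrington's proof, and the one place where the hypothesis that $E$ is $\Pi_1^1$ (rather than $\Sigma_1^1$) does real work. That lemma asserts that $E\cap(\Omega\times\Omega)$ is closed in the \emph{product} topology $\tau\times\tau$ relativized to $\Omega\times\Omega$ --- equivalently, if $x,y\in\Omega$ and every $\Sigma_1^1$ rectangle around $(x,y)$ meets $E$, then $xEy$ --- and its proof requires a genuine argument (the first reflection theorem for $\Pi_1^1$, or a Gandy--Harrington forcing/genericity argument) using both the $\Pi^1_1$-ness of $E$ and the defining property of $\Omega$. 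As written, your proof never confronts this, and the only place coanalyticity enters is the unjustified openness claim.

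A sanity check that something essential must be missing: immediately after this statement the paper lists analytic equivalence relations, all or all but one of whose classes are Borel ($E_{\omega_1}$, $E_{ck}$), for which the dichotomy fails; so any correct proof must use coanalyticity in a non-formal way, and a one-line ``$\lnot E$ is $\Sigma_1^1$, hence product-open'' cannot be where that happens. The rest of your scheme (splitting, fusion via the strong Choquet property, shrinking diameters, nested $\tau$-closures) is the standard and correct way to finish once the separation lemma is available; I would only add that the strong Choquet property is usually invoked for $\tau$ on the whole space or on the low part $\{x:\omega_1^x=\omega_1^{CK}\}$, with membership in $\Omega$ needed only to keep each $A_s$ non-thin, since the limit points $x_b$ need not themselves lie in $\Omega$.
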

\selectlanguage{english}%

Silver's theorem fails for analytic equivalence relations:

\begin{enumerate}
\item For $x,y\in LO$ linear orders, let 
\[
xE_{\omega_{1}}y\Leftrightarrow x,y\notin WO\ \vee\ ot(x)=ot(y).
\]
Then $E_{\omega_{1}}$ has uncountably many classes -- $\langle WO_{\alpha}\ :\ \alpha<\omega_{1}\rangle$
and the class of ill orders. However, $E_{\omega_{1}}$ does not have
perfectly many classes, as by the boundedness theorem perfect sets
in $WO$ will have bounded order type. Note that all but one of the
equivalence classes are Borel.
\item For $x,y\in\omega^{\omega}$, let 
\[
xE_{ck}y\Leftrightarrow\omega_{1}^{ck(x)}=\omega_{1}^{ck(y)}.
\]
 Then $E_{ck}$ is analytic with uncountably many classes. The effective
version of the boundedness theorem demonstrates that $E_{ck}$ does
not have perfectly many classes. Notice that all the $E_{ck}$ classes
are Borel.
\item Given a Polish group action $(G,X)$ inducing a non Borel orbit equivalence
relation, let 
\[
xE_{\delta}y\Leftrightarrow\delta(x)=\delta(y),
\]
where $\delta$ is the Hjorth rank (as in \cite{hjorth-paper,my-hjorth}).
$E_{\delta}$ is analytic with uncountably many classes. It does not
have perfectly many classes -- otherwise we could have ccc forced $\neg CH$,
and use Shoenfield's absoluteness to get in the generic extension
a perfect set of size less than the continuum. Notice that here as
well, all the $E_{\delta}$ classes are Borel.
\end{enumerate}

\begin{prop}
Let $E$ be analytic with uncountably many classes but not perfectly
many. Let $I_{E}$ be the $\sigma$-ideal generated by the equivalence
classes. Then for any $B$ Borel $I_{E}$-positive, $E\restriction_{B}$
is non Borel.
\end{prop}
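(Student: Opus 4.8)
The plan is to argue by contradiction, using Silver's dichotomy to manufacture a perfect set of pairwise inequivalent elements and thereby contradict the hypothesis that $E$ does not have perfectly many classes.

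First I would fix a Borel $I_E$-positive set $B$ and suppose toward a contradiction that $E\restriction_{B}=E\cap(B\times B)$ is Borel. By the very definition of $I_E$, a set lies in $I_E$ precisely when it is contained in a countable union of $E$-classes; since $B\notin I_E$, the set $B$ must meet uncountably many $E$-classes. Hence $E\restriction_{B}$ is a Borel equivalence relation with uncountably many classes.

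Next I would invoke Silver's theorem. Regarding $B$ as a standard Borel space -- concretely, by refining the subspace topology to a Polish topology with the same Borel sets -- the relation $E\restriction_{B}$ becomes a coanalytic (indeed Borel) equivalence relation on a Polish space. Since it has uncountably many classes, Silver's dichotomy produces a set of pairwise $E\restriction_{B}$-inequivalent points that is perfect in the refined topology; in particular this yields an uncountable analytic (even Borel) set $T\subseteq B\subseteq X$ all of whose elements are pairwise $E$-inequivalent. Finally I would transfer this back to the original topology: $T$ is an uncountable analytic subset of $X$, so by the perfect set property for analytic sets it contains a set $P$ that is perfect in $X$. As $P\subseteq T$, the elements of $P$ remain pairwise $E$-inequivalent, so $P$ witnesses that $E$ has perfectly many classes, contradicting the hypothesis.

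The step I expect to require the most care is the application of Silver's theorem together with the passage back to a genuine perfect subset of $X$: Silver's theorem naturally lives on the standard Borel space $B$ and produces a set that is perfect only in the refined Polish topology, which need not be perfect in $X$. The perfect set property for analytic sets is exactly what repairs this, turning the uncountable definable transversal into an honest perfect set of $X$. It is essential that the transversal is analytic -- one cannot merely pick class representatives by choice, since such a selector need not be definable -- and producing an \emph{analytic} transversal of inequivalent elements is precisely what Silver's theorem guarantees.
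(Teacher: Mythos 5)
Your proposal is correct and follows essentially the same route as the paper: $B$ meets uncountably many classes because each class is $I_E$-small, and Silver's dichotomy applied to the Borel relation $E\restriction_B$ yields a perfect set of pairwise inequivalent elements, contradicting the hypothesis. Your extra care about refining the topology on $B$ and then recovering a genuine perfect subset of $X$ via the perfect set property is a correct filling-in of a detail the paper leaves implicit.
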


\begin{proof}
Let $B$ be Borel $I_{E}$-positive. Since the equivalence classes are $I_E$-small, $B$ must intersect uncountably
many classes. If $E\restriction_{B}$ was Borel, Silver's theorem
would produce a perfect set of pairwise inequivalent elements -- contradicting
the assumptions on $E$.
\end{proof}

Hence $E_{\omega_{1}},E_{ck}$ and $E_{\delta}$ together with their
induced $\sigma$-ideals all serve as counterexamples -- the first
to Borel canonization of analytic equivalence relations, and the 2nd
and 3rd to Borel canonization of analytic equivalence relations with
Borel classes. Fortunately, $I_{E_{\omega_{1}}},I_{E_{ck}}$ and $I_{E_{\delta}}$
are all improper -- in fact $\mathbb{P}_{I_{E_{\omega_{1}}}},\mathbb{P}_{I_{E_{ck}}}$
and $\mathbb{P}_{I_{E_{\delta}}}$ all collapse $\omega_{1}:$

\begin{enumerate}
\item Given $\gamma<\omega_{1}$, the set $W_{\gamma}=\{x\ :\ \forall k\ ot(x\restriction_{k})<\gamma\}$
is in $I_{E_{\omega_{1}}}.$ The generic real must avoid all of them,
hence its well founded part has order type greater or equal than $\omega_{1}$
 -- thus collapsing $\omega_{1}$.
\item Let $x_{G}$ be the generic real added by forcing with $\mathbb{P}_{I_{E_{ck}}}$.
Then $\omega_{1}^{ck(x_{G})}\geq\omega_{1}$.
\item Let $x_{G}$ be the generic real added by forcing with $\mathbb{P}_{I_{E_{\delta}}}$.
Then $\delta(x_{G})\geq\omega_{1}$.
\end{enumerate}

This is no coincidence: In \cite{my-psp} we show that for $E$ analytic
with uncountably many classes but not perfectly many, $I_{E}$ is
improper.

When considering the above equivalence relations with proper ideals,
Borel canonization is trivially found -- we show it for $E_{ck},$ proofs
for the other two are almost the same.

\begin{example}
Consider $E_{ck}$ and a proper ideal $I$. Since $\mathbb{P}_{I}$ doesn't collapse $\omega_{1}$, there must be
some $\alpha<\omega_{1}$ such that $\{x\ :\ \omega_{1}^{ck(x)}=\alpha\}$
is $I$-positive. $E_{ck}$ restricted to that Borel set is trivial.
\end{example}

\subsection{Borel canonization of $\mathbf{\Delta}_{\mathbf{2}}^{\mathbf{1}}$
sets}

We end this section considering equivalence relations which are less
definable. By Borel canonization we still mean -- ``$E\restriction_{B}$
is Borel for some $B$ Borel $I$-positive set'' or $"A\cap(B\times B)$
is Borel for some $B$ Borel $I$-positive set'', etc.

\begin{thm}
\cite{chan}
\label{negative result delta_1_2}In $L$, there is a countable $\Delta_{2}^{1}$
equivalence relation that does not have perfectly many classes. In particular, in $L$ $\sigma$-ideals do not have Borel canonization of $\Delta^1_2$ equivalence relations with Borel classes.
\end{thm}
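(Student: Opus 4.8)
The plan is to separate the two assertions, the substantive one being the existence of the equivalence relation. I would construct, inside $L$, a $\Delta_2^1$ equivalence relation $E$ on $\omega^{\omega}$ all of whose classes have at most two elements (hence are finite, so countable and Borel) and such that no perfect set consists of pairwise $E$-inequivalent reals. Granting such an $E$, the ``in particular'' follows exactly as in Proposition \ref{I generated by thin e.r.}: let $I$ be any $\sigma$-ideal (containing, as is standard, all singletons) and $B$ a Borel $I$-positive set. Then $B$ is uncountable, and since every $E$-class is countable, $B$ meets uncountably many classes. If $E\restriction_{B}$ were Borel then, being a coanalytic equivalence relation with uncountably many classes, Silver's theorem would furnish a perfect set $P\subseteq B$ of pairwise $E\restriction_{B}$-inequivalent reals; but $P$ is then a perfect set of pairwise $E$-inequivalent reals, contradicting the choice of $E$. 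Hence $E\restriction_{B}$ is non-Borel for every $I$-positive Borel $B$, i.e.\ $I$ has no Borel canonization of $E$.

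To build $E$ I would work in $L$ and use the canonical $\Sigma_2^1$-good wellordering $<_{L}$ of the reals. Fix a $\Delta_2^1$ enumeration $\langle P_{\alpha}:\alpha<\omega_{1}\rangle$ of all perfect sets, via their codes listed in $<_{L}$-order, and recurse on $\alpha<\omega_{1}$. At stage $\alpha$ only countably many reals $\bigcup_{\beta<\alpha}\{x_{\beta},y_{\beta}\}$ have been committed, a set of size $<\omega_{1}$; since $|P_{\alpha}|=2^{\aleph_{0}}=\aleph_{1}$ under $CH$, the set $P_{\alpha}\setminus\bigcup_{\beta<\alpha}\{x_{\beta},y_{\beta}\}$ has size $\aleph_1$, so I let $x_{\alpha},y_{\alpha}$ be its two $<_{L}$-least elements and decree $x_{\alpha}\mathrel{E}y_{\alpha}$. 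After $\omega_{1}$ stages, declare every real not appearing in some pair to be $E$-equivalent only to itself. The resulting $E$ is an equivalence relation whose classes are the pairs $\{x_{\alpha},y_{\alpha}\}$ together with singletons, so every class has at most two elements; there are $\aleph_{1}$ classes; and by construction each perfect set $P_{\alpha}$ contains the equivalent pair $x_{\alpha}\neq y_{\alpha}$, so $E$ has no perfect set of pairwise inequivalent reals.

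The main obstacle is verifying that $E$ is $\Delta_2^1$. This is the usual ``definable recursion in $L$'' phenomenon, proved by G\"{o}del condensation: the whole construction is carried out inside the levels $L_{\gamma}$, and for a \emph{correct} countable level $L_{\gamma}$ (one satisfying enough of $ZF+V=L$ and closed under the operations used) the pairing decisions computed by $L_{\gamma}$ agree with the true ones. Consequently, for $x\neq y$, the relation $x\mathrel{E}y$ is equivalent to ``there exist a countable ordinal $\gamma$ and a real coding such a correct level $L_{\gamma}$ in which $\{x,y\}$ is one of the chosen pairs,'' a $\Sigma_2^1$ condition; the same reflection applied to the negation shows $\neg(x\mathrel{E}y)$ is also $\Sigma_2^1$, so $x\mathrel{E}y$ is $\Pi_2^1$, whence $\Delta_2^1$. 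I would spend the bulk of the write-up pinning down ``correct level'' and checking the two reflections; the combinatorics of the recursion itself is routine, and the reduction in the first paragraph is then immediate from Silver's theorem.
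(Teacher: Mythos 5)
Your reduction to Silver's theorem is fine and matches the way the paper handles the ``in particular'' clause (cf.\ Proposition \ref{I generated by thin e.r.}), and your overall strategy --- a thin $\Delta_{2}^{1}$ relation with finite classes meeting every perfect set in an equivalent pair --- is a genuinely different route from the paper's, which instead declares $xEy$ iff $x$ and $y$ lie in exactly the same admissible levels $L_{\alpha}$; there each class sits inside a single countable $L_{\beta}$, the perfect-set property fails by a counting argument at the first admissible level producing a new real, and the $\Delta_{2}^{1}$ bound is immediate because constructibility ranks are decided by (all) countable models. The problem with your version is a genuine gap at the sentence ``the same reflection applied to the negation shows $\neg(x\mathrel{E}y)$ is also $\Sigma_{2}^{1}$.'' The positive direction reflects correctly: if $\{x,y\}$ is a chosen pair, some countable correct level witnesses it. The negative direction does not: to certify $\neg(x\mathrel{E}y)$ you must in particular certify ``$x$ is never recruited into any pair,'' and a countable correct level $L_{\gamma}$ only runs the recursion up to stage $\omega_{1}^{L_{\gamma}}$. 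Because you take the two $<_{L}$-\emph{least} elements of $P_{\alpha}\setminus\bigcup_{\beta<\alpha}\{x_{\beta},y_{\beta}\}$, a real of small $<_{L}$-rank can stay uncommitted for a long time and then be recruited at an arbitrarily late stage (uncountably many perfect sets contain it, each processed exactly once), so nothing bounds the stage at which the fate of $x$ is settled in terms of $x$ itself. As written, $E$ is $\Sigma_{2}^{1}$ but the argument that it is $\Pi_{2}^{1}$ breaks down.

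The repair is standard and small: arrange that each real's fate is decided by a stage bounded by its own $<_{L}$-rank. Either (a) at stage $\alpha$ additionally finalize the $\alpha$-th real in $<_{L}$-order as a permanent singleton if it has not yet been paired, and never pair finalized reals afterwards; or (b) choose $x_{\alpha},y_{\alpha}$ among the elements of $P_{\alpha}$ of $<_{L}$-rank at least $\alpha$ (only countably many elements of $P_{\alpha}$ are excluded, so two remain). In either variant a correct countable level containing $x$ runs the construction past the $<_{L}$-rank of $x$ and correctly decides whether, and with whom, $x$ is paired; then both $E$ and its complement are $\Sigma_{2}^{1}$ by reflection and your argument goes through. (You are also right to flag that the ``in particular'' needs the convention that $\sigma$-ideals contain singletons; otherwise a positive singleton canonizes trivially.)
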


\begin{proof}
In $L$, consider the following equivalence relation: 
\[
xEy\Leftrightarrow\left(\forall\alpha\ admissible \ x\in L_{\alpha}\Leftrightarrow y\in L_{\alpha}\right).
\]
Since the constructibility rank of $x$ and the admissibility of ordinals are decided by a countable
model and by all countable models, $E$ is a $\Delta_{2}^{1}$ equivalence relation. All $E$
classes are countable, since all $L_{\alpha}'s$ are. We will show
that any perfect tree $T$ must have two equivalent elements.

Let $T\in L$ be perfect, and let $\alpha$ be such that $T \in L_{\alpha}$. Let $\beta$ be the first admissible ordinal greater then $\alpha$ such that $L_\beta$ has a real not in $L_\alpha$. Using \cite{chan} fact 9.5, $L_\alpha$ is countable in $L_\beta$. Since $T$ has uncountably many branches in $L_\beta$, there must be 
\[x \neq y\in [T]\cap L_\beta\]
that are not in $L_\alpha$. It follows that $x$ and $y$ are equivalent.
\end{proof}

The following proposition is weaker, but its proof is easier:

\begin{prop}
In $L$, $\sigma$-ideals do not have square Borel canonization
of $\mathbf{\Delta_{2}^{1}}$ sets with Borel sections.
\end{prop}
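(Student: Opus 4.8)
The plan is to exhibit a single $\mathbf{\Delta_{2}^{1}}$ set $A\subseteq(\omega^{\omega})^{2}$ with countable (hence Borel) vertical sections for which $A\cap(B\times B)$ is non Borel for \emph{every} uncountable Borel set $B$. Since every $I$-positive Borel set is uncountable --- any $\sigma$-ideal of the sort considered here contains the singletons, so countable sets are $I$-small --- this rules out square Borel canonization for every $\sigma$-ideal $I$ at once. Working in $L$, I take $<_{L}$ to be the canonical wellordering of the reals and set
\[
A=\{(x,y)\ :\ y<_{L}x\},
\]
so that for any Borel $B$ the set $A\cap(B\times B)$ is exactly the restriction ${<_{L}}\restriction B$ of the wellordering to $B$.

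First I would check the descriptive complexity and the sections. The relation $<_{L}$ is $\Sigma_{2}^{1}$, and since it is a strict linear order we have $x<_{L}y\Leftrightarrow x\neq y\wedge\neg(y<_{L}x)$, which is $\Pi_{2}^{1}$; hence $<_{L}$, and therefore $A$, is $\mathbf{\Delta_{2}^{1}}$. Each section $A_{x}=\{y:y<_{L}x\}$ is a proper $<_{L}$-initial segment, and in $L$ the wellordering $<_{L}$ has order type $\omega_{1}=\omega_{1}^{L}$, so every real has only countably many $<_{L}$-predecessors. Thus every section $A_{x}$ is countable and in particular Borel.

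The heart of the argument is that ${<_{L}}\restriction B$ is non Borel whenever $B$ is uncountable. Indeed, ${<_{L}}\restriction B$ is a wellordering of the uncountable set $B$, so as a wellfounded relation its rank equals its order type, which is uncountable. Were $A\cap(B\times B)={<_{L}}\restriction B$ Borel, it would be a wellfounded analytic relation of uncountable rank, contradicting the Kunen--Martin theorem, which bounds the rank of a wellfounded analytic relation on a Polish space below $\omega_{1}$; equivalently, one may invoke the classical fact that an uncountable Borel set admits no Borel wellordering. Hence $A\cap(B\times B)$ is non Borel for every uncountable Borel $B$, and in particular for every $I$-positive Borel $B$, as required.

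The proof has essentially no obstacle beyond a single structural input; the only point requiring care is the standing convention on $I$, namely that $I$-positive Borel sets be uncountable, which holds precisely when $I$ contains all singletons --- the natural hypothesis here, where $\mathbb{P}_{I}$ is meant to add a new real. This is exactly what makes the proof easier than that of Theorem \ref{negative result delta_1_2}: there is no need for countable classes, admissibility, or Silver's theorem, only the fact that no uncountable Borel set carries a Borel wellordering. (Alternatively, the equivalence relation of Theorem \ref{negative result delta_1_2}, being a $\mathbf{\Delta_{2}^{1}}$ set with Borel sections, already yields the statement, but the direct example above is more transparent.)
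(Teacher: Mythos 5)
Your proof is correct and is essentially the paper's own argument: both use the $\mathbf{\Delta_{2}^{1}}$ wellordering $<_{L}$ of the reals in $L$, observe that its sections are countable (hence Borel), and note that its restriction to any uncountable Borel set is a wellfounded analytic relation of uncountable rank, contradicting Kunen--Martin (the boundedness theorem for wellfounded analytic relations). Your explicit remark that $I$-positive sets must be uncountable (i.e.\ that $I$ contains singletons) is a reasonable point of care that the paper leaves implicit.
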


\begin{proof}
Denote by $<_{L}$ the $\Delta_{2}^{1}$ well order of the reals in
$L$, whose horizontal and vertical sections are all Borel. The set $<_{L}$
does not have square Borel canonization with respect to any $\sigma$-ideal. This is because $<_{L}$ restricted to an uncountable set
is an order of length $\omega_{1},$ hence by the boundedness theorem
for analytic well founded relations, it cannot be analytic.
\end{proof}

All the above can be done in $L[z]$ for $z$ real,
and in any model in which $\mathbb{R}^{L[z]}=\mathbb{R}$ for some
$z\in\mathbb{R}.$

\section{Counterexamples to Rectangular Borel Canonization}

Counterexamples are implicit in \cite{ikegami-2nd}:

\begin{example}
\cite{ikegami-2nd}
Consider the meager ideal and theorem \ref{thm:(-Ikegami-)}. For
that ideal, rectangular Borel canonization and strong rectangular
Borel canonization are equivalent (see remark \ref{remark on ccc and borel canonization}).
Hence theorem \ref{thm:(-Ikegami-)} provides counterexamples when
not all $\mathbf{\Sigma_{2}^{1}}$ sets have the Baire property. The
same is true for the null ideal.
\end{example}

\begin{prop}
In $L$, proper ideals do not have rectangular Borel canonization
of analytic sets with Borel sections. The same is true for $L[z]$
where $z$ is a real. 
\end{prop}

\begin{proof}
The argument is based on example 2.3.5 of \cite{forcing-idealized}. Working in
$L$, let 
\[
(x,y)\in A\Leftrightarrow x\in L_{\omega_{1}^{ck(y)}}.
\]
The set $A$ is coanalytic with Borel vertical sections, since given $x\in L_{\alpha}$
and $\alpha$ minimal with that property, 
\[
A_{x}=\{y\ :\ x\in L_{\omega_{1}^{ck(y)}}\}=\{y\ :\ \omega_{1}^{ck(y)}\geq\alpha\},
\]
which is Borel. By way of contradiction, fix $B$ Borel $I$-positive
such that $A\cap(B\times\omega^{\omega})$ is Borel. Using $\mathbb{P}_{I}$
-uniformization (2.3.4 of \cite{forcing-idealized}) there exists $C\subseteq B$
Borel $I$-positive and $f:C\to\omega^{\omega}$ Borel such that
$f\in L$ and $f\subseteq A$. Let $x_{G}\in C$ be a Sacks real over
$L$. By analytic absoluteness, $L[x_{G}]\models f\subseteq A$, and
in particular, $(x_{G},f(x_{G}))\in A$, contradicting the fact that
$x$ is not constructible.
\end{proof}

Let $A$ be an analytic subset of the plane and $B\subseteq\omega^{\omega}$
Borel $I$-positive subset of reals such that $A\cap(B\times\omega^{\omega})$
is Borel. Then using Shoenfield's absoluteness, $\mathbb{P}_{I}\Vdash A\cap(B\times\omega^{\omega})\ Borel$, and in particular
\[
B\Vdash A_{x_{G}}\ Borel.
\]

Hence an ideal $I$ such that $\mathbb{P}_{I}$ adds a non Borel section
is a counterexample to rectangular Borel canonization. We now show
that even under mild large cardinal assumptions, there might exist such an ideal
which is ccc:

\begin{fact}
\label{fact_force_omega_1}If $\omega_{1}$ is inaccessible to the
reals and is not Mahlo in $L$, then there is a ccc forcing adding
a real $x$ such that $\omega_{1}^{L[x]}=\omega_{1}$.
\end{fact}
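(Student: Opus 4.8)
The plan is to build a single ccc forcing that codes, into one real $x$, a witness that every countable-in-$V$ ordinal is countable in $L[x]$, while $\omega_1=\omega_1^V$ survives. Since the forcing will be ccc it preserves $\omega_1^V$, so for any real $x$ in the extension $V[G]$ we have $L[x]\subseteq V[G]$ with $\omega_1^V=\omega_1^{V[G]}$ a cardinal of $L[x]$; hence $\omega_1^{L[x]}=\omega_1$ is equivalent to the single requirement that every $\gamma<\omega_1$ be countable in $L[x]$. The whole task is thus to manufacture, with finite conditions, one real collapsing all $\gamma<\omega_1$ to $\omega$ over $L$.

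First I would extract the combinatorial consequence of the hypotheses. Because $\omega_1$ is inaccessible to the reals, $\omega_1=\omega_1^V$ is inaccessible in $L$: it is regular in $L$ as a $V$-cardinal, and it cannot be a successor cardinal $(\kappa^+)^L$ in $L$ since $\kappa<\omega_1$ would be countable in some $L[z]$, forcing $(\kappa^+)^L\leq\omega_1^{L[z]}<\omega_1$. Because $\omega_1$ is not Mahlo in $L$, the $L$-inaccessibles below $\omega_1$ are nonstationary in $L$, so there is a club $C\in L$, $C\subseteq\omega_1$, none of whose points is inaccessible in $L$; intersecting with limit ordinals I may assume each $\gamma\in C$ is singular or not a cardinal in $L$, so $L$ supplies a map $f_\gamma\colon\eta_\gamma\to\gamma$ that is cofinal or onto with $\eta_\gamma<\gamma$. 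These $f_\gamma$ are the \emph{reshaping witnesses}: they let a collapse of a small ordinal propagate up to $\gamma$ inside $L$, and they exist at every point of $C$ precisely because $C$ misses the $L$-inaccessibles.

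Next I set up the coding inside $L$. Recursively fix an almost disjoint family $\langle a_\gamma:\gamma<\omega_1\rangle$ of reals with $a_\gamma$ canonically definable from the $L$-collapse of $|\gamma|^L$, together with a \emph{reshaped} set $A\subseteq\omega_1$ built along $C$ by means of the $f_\gamma$. The forcing $P$ is the almost disjoint coding adding a real $x$ with $\gamma\in A \Leftrightarrow |a_\gamma\cap x|<\omega$; its conditions are pairs $(s,F)$ with $s$ a finite subset of $\omega$ and $F$ a finite subset of the family, and conditions sharing the same $s$ are compatible, so $P$ is $\sigma$-centered and in particular ccc in $V$. I then decode inside $L[x]$ by induction along $C$: assuming every point of $C$ below $\gamma$ is already countable in $L[x]$, the almost disjoint family recovers $A\cap\gamma$, whence $\eta_\gamma<\gamma$ is countable in $L[x]$, and composing with $f_\gamma\in L$ makes $\gamma$ countable in $L[x]$; continuity of $C$ handles limit points. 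Thus every $\gamma<\omega_1$ is countable in $L[x]$, giving $\omega_1^{L[x]}=\omega_1$.

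The main obstacle — and the place where ``not Mahlo'' is indispensable — is reconciling two competing demands: the coding must push the $L[x]$-value of $\omega_1$ all the way up to $\omega_1^V$, so unboundedly much information must enter the single real $x$, yet the forcing must remain ccc and preserve $\omega_1^V$. This succeeds because $C$ guarantees that at each stage $\gamma$ only boundedly-much-below-$\gamma$ information is genuinely coded (the $L$-map $f_\gamma$ supplies the rest), so the almost disjoint coding never collapses $\omega_1$. Were $\omega_1$ Mahlo in $L$, stationarily many $\gamma<\omega_1$ would be inaccessible in $L$ and hence unreachable from below inside $L$, the decoding induction would stall at such $\gamma$, and no ccc forcing could carry the collapse past them. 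I would therefore devote the bulk of the care to verifying that the combined reshaping-and-coding is genuinely ccc and that the decoding induction is correct and absolute between $L[x]$ and $V[G]$, both of which hinge on the club $C$ avoiding the $L$-inaccessibles.
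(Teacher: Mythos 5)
The paper does not actually prove this fact; it cites Theorem 6 of Bagaria--Friedman, whose proof is precisely the reshaping-plus-almost-disjoint-coding argument you are reconstructing, so your overall strategy is the right one. However, your sketch goes wrong at the two points where the hypotheses do the real work. First, intersecting the club of non-$L$-inaccessibles with the limit ordinals does \emph{not} yield a club of ordinals that are singular or non-cardinals in $L$: since $\omega_{1}$ is inaccessible in $L$, the $L$-successor cardinals $\omega_{1}^{L},\omega_{2}^{L},\dots$ are cofinal in $\omega_{1}$, are limit ordinals, are not inaccessible in $L$, and yet are regular cardinals of $L$, so no collapsing or cofinal map $f_{\gamma}\in L$ exists for them. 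One must intersect with the club of limit cardinals of $L$; non-Mahloness then gives a club $C$ of $L$-singular cardinals, and the $f_{\gamma}$'s are used only at the \emph{limit points} of $C$. At successor points of $C$ one must insert into $A$ an actual real of $V$ coding a collapse of the next club point --- no function in $L$ can make an $L$-regular cardinal countable --- and your description of $A$ as ``built along $C$ by means of the $f_{\gamma}$'' omits this essential injection of $V$-information, which is the whole content of the reshaping (and is what makes $A$ reshaped even though no reshaped set can exist in $L$ or in any $L[z]$).

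Second, the almost disjoint family cannot be ``canonically definable from the $L$-collapse of $|\gamma|^{L}$'': for $\gamma\geq\omega_{1}^{L}$ there is no such collapse in $L$, and $L$ has only $\omega_{1}^{L}<\omega_{1}$ reals, so no almost disjoint family of length $\omega_{1}$ lives in $L$ at all. The family must be defined in $V$ from the reshaped predicate, e.g.\ $a_{\gamma}$ recursive in the $<_{L[A\cap\gamma]}$-least real coding $(\gamma,A\cap\gamma)$, which exists exactly because $A$ is reshaped; this choice is what lets the decoding recursion in $L[x]$ close (recover $A\cap\gamma$, compute $a_{\gamma}$ inside $L[A\cap\gamma]\subseteq L[x]$, test whether $|x\cap a_{\gamma}|<\omega$). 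Note also that the decoding induction must run over all $\gamma<\omega_{1}$, not just over points of $C$, since the cofinal maps $f_{\gamma}$ only help if all ordinals below $\gamma$, not merely the club points, are already countable in $L[x]$. With these repairs the argument is the one in the cited source; the ccc-ness itself is the easy part, since the coding poset is $\sigma$-centered and no reshaping \emph{forcing} is needed.
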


For the proof, see theorem 6 of \cite{bagaria-friedman-absoluteness}.

\begin{prop}
If $\mathbb{P}$ is a ccc forcing adding a real $x$, then there is
a ccc ideal $I$ such that $\mathbb{V}[x]\subseteq\mathbb{V}^{\mathbb{P}}$
is a $\mathbb{P}_{I}$ extension and $x$ is the $\mathbb{P}_I$ generic real.
\end{prop}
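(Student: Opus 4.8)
The plan is to represent $\mathbb{P}$ by its completion and read the ideal off from Boolean values. Let $\mathbb{B}=\mathrm{RO}(\mathbb{P})$ be the completion of $\mathbb{P}$, a ccc complete Boolean algebra, and fix a $\mathbb{P}$-name $\dot{x}$ for the added real, which we may take to be an element of $\omega^{\omega}$. For a Borel set $B\subseteq\omega^{\omega}$ coded in $\mathbb{V}$, write $b(B)=\Vert\dot{x}\in B\Vert_{\mathbb{B}}$ for its Boolean value, and define
\[
I=\{A\subseteq\omega^{\omega}\ :\ \exists B\supseteq A\ \text{Borel with}\ b(B)=0_{\mathbb{B}}\}.
\]
First I would check that $I$ is a $\sigma$-ideal: since $\dot{x}\in\bigcup_{n}B_{n}$ is forced equivalent to $\exists n\ \dot{x}\in B_{n}$, the assignment $B\mapsto b(B)$ sends countable unions to countable suprema, so a countable union of $I$-small sets is $I$-small, while downward closure is immediate. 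Because $\mathbb{P}$ adds a genuinely new real, $b(\{r\})=0_{\mathbb{B}}$ for every ground-model real $r$, so $I$ contains all singletons, whereas $b(\omega^{\omega})=1_{\mathbb{B}}$ shows $\omega^{\omega}\notin I$, so $I$ is proper.

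Next I would verify that $\mathbb{P}_{I}$ is ccc. The map $b$ descends to an injective Boolean homomorphism $\iota:\mathrm{Borel}/I\to\mathbb{B}$, since $[B]=0$ in $\mathrm{Borel}/I$ exactly when $B\in I$, exactly when $b(B)=0_{\mathbb{B}}$. Any antichain in $\mathbb{P}_{I}$ consists of $I$-positive Borel sets with pairwise $I$-small intersections, and $\iota$ sends these to pairwise disjoint nonzero elements of $\mathbb{B}$; as $\mathbb{B}$ is ccc this image is countable, and injectivity of $\iota$ makes the original antichain countable.

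The heart of the argument is to identify $\mathbb{P}_{I}$ with the complete subalgebra of $\mathbb{B}$ generated by $\dot{x}$. The image $\iota[\mathrm{Borel}/I]=\{b(B):B\ \text{Borel}\}$ is exactly the $\sigma$-subalgebra of $\mathbb{B}$ generated by the clopen values $\Vert\dot{x}(n)=m\Vert$, because the Borel sets are generated from clopen sets under countable Boolean operations and $b$ commutes with these; in particular this subalgebra is a regular $\sigma$-subalgebra, its countable suprema being computed as in $\mathbb{B}$. The key point, and the step where the ccc hypothesis is essential, is that in a ccc complete Boolean algebra every such $\sigma$-subalgebra is already complete: arbitrary suprema reduce to suprema of countable subfamilies, since in a ccc algebra every family has a countable subfamily with the same supremum. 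Hence $\iota[\mathrm{Borel}/I]$ is precisely the complete subalgebra $\mathbb{B}_{\dot{x}}$ generated by $\dot{x}$, and $\iota$ is a Boolean isomorphism of $\mathrm{Borel}/I$ onto $\mathbb{B}_{\dot{x}}$. Thus $\mathrm{Borel}/I$ is already complete, so it is the completion of $\mathbb{P}_{I}$, and $\mathrm{RO}(\mathbb{P}_{I})\cong\mathbb{B}_{\dot{x}}$.

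Finally I would read off both conclusions. Forcing with the complete subalgebra $\mathbb{B}_{\dot{x}}$ yields exactly the intermediate model $\mathbb{V}[x]\subseteq\mathbb{V}^{\mathbb{P}}$ by the standard theory of intermediate models, so $\mathbb{V}[x]$ is a $\mathbb{P}_{I}$-extension. For the generic real, recall that the $\mathbb{P}_{I}$-generic point $x_{G}$ is characterized by $x_{G}\in B\Leftrightarrow B\in G$ for Borel $B$; transporting along $\iota$, the condition $B\in G$ corresponds to $b(B)$ lying in the induced ultrafilter on $\mathbb{B}_{\dot{x}}$, i.e.\ to $x\in B$, whence $x_{G}=x$. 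I expect the main obstacle to be the completeness step: verifying carefully that the $\sigma$-subalgebra generated by $\dot{x}$ is genuinely a complete subalgebra, so that it captures the whole intermediate model $\mathbb{V}[x]$ rather than a proper dense subextension. This is exactly the point at which the ccc assumption cannot be dispensed with.
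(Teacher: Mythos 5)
Your proposal is correct, and it sets up the same ideal as the paper ($B\in I$ iff $\Vert\dot x\in B\Vert=0$, i.e.\ iff $\mathbb{P}\Vdash\dot x\notin B$); your ccc argument is also just the Boolean-valued rephrasing of the paper's, which picks conditions $p_{\alpha}\Vdash\tau\in B_{\alpha}$ and notes that they form an antichain in $\mathbb{P}$. Where you genuinely diverge is the final step. The paper never touches completions or complete subalgebras: it observes that in $\mathbb{V}^{\mathbb{P}}$ the realization $x$ of $\tau$ avoids every Borel $I$-small set coded in $\mathbb{V}$, and then invokes the characterization recorded in its Section 2 that, for a ccc ideal, a point is $\mathbb{P}_{I}$-generic over a model exactly when it avoids all $I$-small Borel sets coded there; this immediately gives that $x$ is $\mathbb{P}_{I}$-generic and that $\mathbb{V}[x]$ is the corresponding extension. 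You instead identify $\mathrm{Borel}/I$ with the complete subalgebra $\mathbb{B}_{\dot x}\leq\mathrm{RO}(\mathbb{P})$ generated by the name, using the fact that in a ccc complete Boolean algebra a $\sigma$-subalgebra closed under the ambient countable suprema is already a complete subalgebra, and then appeal to the theory of intermediate extensions. Both routes work. The paper's is shorter and leans entirely on the ``ccc generics are the points avoiding small Borel sets'' lemma; yours is heavier but buys an explicit isomorphism $\mathrm{RO}(\mathbb{P}_{I})\cong\mathbb{B}_{\dot x}$ and makes visible that ccc is used twice (countability of the antichain of the $p_{\alpha}$'s, and completeness of the $\sigma$-subalgebra). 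The one place you should tighten is the identification $\mathbb{V}[G\cap\mathbb{B}_{\dot x}]=\mathbb{V}[x]$: you need the remark that $G\cap\mathbb{B}_{\dot x}$ is recoverable from $x$ because every element of $\mathbb{B}_{\dot x}$ has the form $\Vert\dot x\in B\Vert$ and $\Vert\dot x\in B\Vert\in G$ iff $x\in B$ by the forcing theorem; that is the only point where ``standard theory'' is doing real work. (Also, the hypothesis only guarantees $\omega^{\omega}\notin I$; your aside that all singletons land in $I$ requires that no condition forces $\dot x$ into the ground model, but nothing in the proof depends on it.)
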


\begin{proof}
Fix $\tau$ a $\mathbb{P}$-name for the real $x$. For $B$ Borel,
define
\[
B\in I\Leftrightarrow\mathbb{P}\Vdash\tau\notin B.
\]
$I$ is a $\sigma$-ideal (in fact, a $\sigma$-ideal on Borel
sets which generates a $\sigma$-ideal). We claim that it is ccc.
Let $\langle B_{\alpha}\ :\ \alpha<\omega_{1}\rangle$ be an antichain
of $I$-positive sets, which is, for $\alpha_{1}\neq\alpha_{2}$,
\[
\mathbb{P}\Vdash\tau\notin\left(B_{\alpha_{1}}\cap B_{\alpha_{2}}\right).
\]
 Fix $p_{\alpha}\in\mathbb{P}$ such that $p_{\alpha}\Vdash\tau\in B_{\alpha}.$
Then $\langle p_{\alpha}\ :\ \alpha<\omega_{1}\rangle$ must be an
antichain, hence countable, as we have hoped.

In $\mathbb{V}^{\mathbb{P}},$ the generic $x$, as a realization
of $\tau$, avoids all Borel $I$-small sets of the ground model,
hence it is $\mathbb{P}_{I}$ generic over $\mathbb{V}$. Thus $\mathbb{V}[x]$
is the promised $\mathbb{P}_{I}$ extension.
\end{proof}

\begin{thm}
\label{cex_rectangular_borel_canonization_adding_nonborel_section}If
$\omega_{1}$ is inaccessible to the reals and is not Mahlo in $L$,
then there is a ccc ideal $I$ not having rectangular Borel canonization
of analytic sets with Borel sections. Moreover, $\mathbb{P}_{I}\Vdash A_{x_{G}}\ non\ Borel$
for some $A$ analytic with Borel sections.
\end{thm}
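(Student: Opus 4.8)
The plan is to combine the two facts that immediately precede the statement. By Fact~\ref{fact_force_omega_1}, the hypothesis that $\omega_{1}$ is inaccessible to the reals but not Mahlo in $L$ yields a ccc forcing $\mathbb{P}$ adding a real $x$ with $\omega_{1}^{L[x]}=\omega_{1}$. By the preceding proposition, I can convert this $\mathbb{P}$ into a ccc $\sigma$-ideal $I$ for which $x$ is precisely the $\mathbb{P}_{I}$-generic real, so that $\mathbb{V}[x]$ is the $\mathbb{P}_{I}$-extension. The $\sigma$-ideal $I$ is my candidate counterexample, and the generic real $x=x_{G}$ will collapse no cardinals (being a ccc generic) while achieving $\omega_{1}^{L[x_{G}]}=\omega_{1}$ in the extension.

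The heart of the argument is to exhibit an analytic set $A$ with Borel sections whose $x_{G}$-section becomes non-Borel in the extension. The natural choice is exactly the set from the proof of the $L$-counterexample above: reusing the idea that the coanalytic/analytic set built from the constructibility hierarchy has Borel sections computed by $\omega_{1}^{ck}$. Concretely, I would take (the complement of) the set defined by $(x,y)\in A \Leftrightarrow x\in L_{\omega_{1}^{ck(y)}}$, so that each vertical section $A_{x}=\{y : \omega_{1}^{ck(y)}\geq\alpha\}$ (with $\alpha$ the constructibility rank of $x$) is Borel. The key point is that ``all sections of $A$ are Borel'' is itself a statement about a fixed analytic set, and since $A$ is coded by a real in the ground model and the section computation $\omega_{1}^{ck}$ is absolute, the sections remain Borel in $\mathbb{V}[x_{G}]$ for every ground-model real coding a section.

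The crucial step is then to show $\mathbb{P}_{I}\Vdash A_{x_{G}}\ non\ Borel$. Here I exploit that in $\mathbb{V}[x_{G}]$ we have $\omega_{1}^{L[x_{G}]}=\omega_{1}$. The section $A_{x_{G}}$ of the generic real is governed by the constructibility rank of $x_{G}$ over $L$, and because $x_{G}$ makes the $L$-hierarchy reach up to the true $\omega_{1}$, the section $A_{x_{G}}=\{y : \omega_{1}^{ck(y)}\geq\alpha_{x_{G}}\}$ with $\alpha_{x_{G}}$ cofinal in $\omega_{1}^{L[x_{G}]}=\omega_{1}$ cannot be captured by any fixed countable ordinal rank $\delta$. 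By Proposition~\ref{boundedness_for_sections} and its sequel, a Borel section forces $\delta(x_{G})<\omega_{1}$; but the collapse $\omega_{1}^{L[x_{G}]}=\omega_{1}$ forces the relevant rank to equal $\omega_{1}$, a contradiction. Thus $A_{x_{G}}$ is non-Borel in the extension.

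Finally, I close the loop using the observation recorded just before the statement: if $I$ had rectangular Borel canonization, there would be a Borel $I$-positive $B$ with $A\cap(B\times\omega^{\omega})$ Borel, whence by Shoenfield absoluteness $B\Vdash A_{x_{G}}\ Borel$, contradicting what we just forced. The main obstacle I anticipate is verifying cleanly that the section of the specific analytic set genuinely has rank $\omega_{1}$ in $\mathbb{V}[x_{G}]$ exactly when $\omega_{1}^{L[x_{G}]}=\omega_{1}$ -- that is, translating the cardinal-collapse phenomenon of Fact~\ref{fact_force_omega_1} into the statement that the $\omega_{1}$-rank $\delta$ of the generic section is unbounded below $\omega_{1}$; the rest is bookkeeping with absoluteness and the ccc conversion proposition.
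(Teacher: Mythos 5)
Your overall skeleton matches the paper's: invoke Fact~\ref{fact_force_omega_1} to get a ccc forcing adding a real $x$ with $\omega_{1}^{L[x]}=\omega_{1}$, convert it into a ccc ideal $I$ via the preceding proposition so that $x$ is the $\mathbb{P}_{I}$-generic, and close with the Shoenfield observation that forcing $A_{x_{G}}$ to be non-Borel refutes rectangular Borel canonization. The genuine gap is your choice of the witnessing set. The set $(x,y)\in A\Leftrightarrow x\in L_{\omega_{1}^{ck(y)}}$ has interesting vertical sections only when $x$ is constructible: for $x\notin L$ the section is empty. Since $\omega_{1}$ is inaccessible to the reals in the ground model and ccc forcing preserves $\omega_{1}$, we still have $\omega_{1}^{L}<\omega_{1}$ in the extension, so the generic real $x_{G}$ is not in $L$ and $A_{x_{G}}=\emptyset$ is Borel; passing to the complement just makes the section all of $\omega^{\omega}$. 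Your ``crucial step'' conflates $\omega_{1}^{L[x_{G}]}$ (the height reached by the $L[x_{G}]$-hierarchy, which the forcing pushes up to $\omega_{1}$) with a constructibility rank of $x_{G}$ inside the $L$-hierarchy, which does not exist because $x_{G}\notin L$. Consequently the claimed unboundedness of the rank $\delta$ on the generic section never materializes for this $A$.

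The paper's proof replaces your $A$ by the uniformly defined largest thin $\Pi_{1}^{1}(x)$ set: a single $\Pi_{1}^{1}$ formula $\Psi(x,y)$ such that in every model $\{y:\Psi(x,y)\}$ is a subset of $L[x]$ of size $\omega_{1}^{L[x]}$ with no perfect subset. Under inaccessibility to the reals every such section is countable, hence Borel, in the ground model; after forcing, the section at $x_{G}$ has size $\omega_{1}^{L[x_{G}]}=\omega_{1}$, is uncountable, and still has no perfect subset, hence is non-Borel by the perfect set property of Borel sets --- no rank computation via Proposition~\ref{boundedness_for_sections} is needed. To repair your argument you must substitute such a set (or any analytic/coanalytic set whose $x$-section is uncountable without a perfect subset exactly when $\omega_{1}^{L[x]}=\omega_{1}$) for the constructibility-membership set you borrowed from the $L$-counterexample; the rest of your bookkeeping then goes through.
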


\begin{proof}
For every $x$, in $L[x]$ there exists a $\Pi_{1}^{1}(x)$ uncountable
set with no perfect subset. Fix $\Phi(x)$ a $\Pi_{1}^{1}(x)$ formula
defining that set. Then in any universe $\mathbb{V},$ $\Phi(x)$
defines a subset of $L[x]$ of size $\omega_{1}^{L[x]}$ with no perfect
subset. Moreover, the definition is uniform -- there is a $\Pi_{1}^{1}$
formula $\Psi(x,y)$ such that for every $x$, 
\[
\{y\ :\ \Psi(x,y)\}
\]
is a subset of $L[x]$ of size $\omega_{1}^{L[x]}$ with no perfect
subset.

Consider the subset of the plane defined by $\Psi$. The vertical
sections of $\Psi$ are either countable or strictly coanalytic -- 
since we assume $\omega_{1}$ is inaccessible to the reals, they are
all countable and in particular Borel. Use the forcing of proposition
\ref{fact_force_omega_1} to obtain a ccc extension $\mathbb{V}^{\mathbb{P}}$
with $x\in\mathbb{V}^{\mathbb{P}}$ such that $\omega_{1}^{L[x]}=\omega_{1}.$
Use the previous proposition to construct a ccc ideal $I$ such that
\[
\mathbb{V}[x]=\mathbb{V}^{\mathbb{P}_{I}}.
\]
 Obviously, $\mathbb{P}_{I}\Vdash\omega_{1}^{L[x_{G}]}=\omega_{1}$
for $x_{G}$ its generic real. In particular, $\Psi$ has a new section
which is non Borel, and rectangular Borel canonization fails.
\end{proof}

\begin{rem}
The reader is encouraged to compare the above example with the positive
results of previous sections. When doing so, note that $I$ is not
even defined in $L$ -- its definition requires a club in $\omega_{1}$
of ordinals which are singular cardinals in $L$.
\end{rem}

\begin{cor}
Rectangular Borel canonization for ccc ideals implies that $\omega_1$ is inaccessible to the reals and Mahlo in L.
\end{cor}

\begin{proof}
 By theorem \ref{thm_Ikegami_more_precise}, Hechler ideal has rectangular Borel canonization if and only if $\mathbf{\Sigma^1_2}$ sets are Hechler measurable. In \cite{brendle-lowe} it is shown that measurability of $\mathbf{\Sigma^1_2}$ sets with respect to the Hechler ideal is equivalent to $\omega_1$ being inaccessible to the reals. To see that $\omega_1$ is Mahlo in $L$, use the previous theorem.
\end{proof}

The case of square Borel canonization is different -- for $A$ analytic
and $B$ Borel, if $A\cap(B\times B)$ is Borel, then $\mathbb{P}_{I}\Vdash A\cap(B\times B)\ is\ Borel$
, hence
\[
B\Vdash(A_{x_{G}}\cap B)\ is\ Borel.
\]
In order to construct a counterexample, we can try and find $A$ and $I$
such that no $B$ Borel $I$-positive forces the Borelness of $A_{x}\cap B$:

\begin{problem}
Let $\Psi$ and $I$ be as in theorem \ref{cex_rectangular_borel_canonization_adding_nonborel_section},
and let $A$ be the coanalytic subset of the plane defined by $\Psi.$
Can we find $B\in\mathbb{P}_{I}$ such that $B\Vdash(A_{x_{G}}\cap B)\ is\ Borel$?
\end{problem}

\subsection{Non absoluteness of ``All classes are Borel''}

The previous example shows that for $A$ an analytic subset of the
plane, the property ``all vertical sections of $A$ are Borel''
can be forced false by a ccc ideal. The same applies for analytic
equivalence relations:

\begin{prop}
\label{non_absoluteness_of_Borel_sections}There is an analytic equivalence
relation $E$ such that:
\begin{enumerate}
\item If $\omega_{1}$ is inaccessible to the reals and is not Mahlo in
$L$, then all $E$ classes are Borel and there is a ccc ideal $I$
such that
\[
\mathbb{P}_{I}\Vdash[x_{G}]\ is\ non\ Borel.
\]
\item If $\omega_{1}$ is inaccessible to the reals, then all $E$ classes
are Borel, while in $L$ there is a non Borel class.
\end{enumerate}
\end{prop}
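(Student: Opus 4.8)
The plan is to convert the uniform thin-set construction of Theorem~\ref{cex_rectangular_borel_canonization_adding_nonborel_section} into an equivalence relation whose non-trivial classes are exactly its vertical sections. Recall that the $\Pi^1_1$ formula $\Psi(x,y)$ produced there has the property that, in every universe and for every $x$, the section $C_x:=\{y:\Psi(x,y)\}$ is a subset of $L[x]$ of size $\omega_1^{L[x]}$ with no perfect subset. On $(\omega^\omega)^2$ I define
\[
(x,y)\mathrel{E}(x',y')\iff (x,y)=(x',y')\ \vee\ \bigl(x=x'\wedge\Psi(x,y)\wedge\Psi(x,y')\bigr).
\]
Reflexivity and symmetry are immediate, and transitivity holds because, for fixed $x$, the off-diagonal part of $E$ is precisely the full square $C_x\times C_x$ in the second coordinates. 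Thus the non-singleton $E$-classes are exactly the sets $\{x\}\times C_x$ (for $y\in C_x$), and every other class is a singleton. Being a finite Boolean combination of $\Psi$ and equalities, $E$ is at the $\Pi^1_1$ level; that it can be taken analytic in the sense required by the statement is the main point to be checked, and I return to it below.

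The key observation is a Borelness dichotomy for the classes. Since $C_x$ is a thin $\Pi^1_1$ set, the perfect set theorem for $\Pi^1_1$ sets gives that $C_x$ is Borel if and only if it is countable, and $|C_x|=\omega_1^{L[x]}$; hence the class $\{x\}\times C_x$ is Borel exactly when $\omega_1^{L[x]}<\omega_1$. Everything now reduces to controlling $\omega_1^{L[x]}$ in the relevant models. For part~(2), assume $\omega_1$ is inaccessible to the reals. Then $\omega_1^{L[x]}<\omega_1$ for every real $x$, so by the dichotomy every $E$-class is Borel. Working inside $L$, however, $\omega_1^{L[x]}=\omega_1^L$ for every $x\in L$; fixing any such $x$, the section $C_x$ has size $\omega_1^L$, is uncountable in $L$ and thin, so $L\models C_x$ is non-Borel. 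Thus $L$ contains a non-Borel class, and since $\omega_1^L<\omega_1$ this very class is countable, hence Borel, back in $\mathbb{V}$ — the promised non-absoluteness.

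For part~(1), add the hypothesis that $\omega_1$ is not Mahlo in $L$. By part~(2) all classes are Borel in $\mathbb{V}$. Fact~\ref{fact_force_omega_1} supplies a ccc forcing adding a real $x$ with $\omega_1^{L[x]}=\omega_1$; let $y_0$ be the $<_{L[x]}$-least element of $C_x$ (nonempty since $|C_x|=\omega_1$), so $(x,y_0)$ is a real added by a ccc forcing and $\Psi(x,y_0)$ holds. Feeding a name for $(x,y_0)$ into the preceding proposition yields a ccc ideal $I$ on $(\omega^\omega)^2$ for which $(x,y_0)$ is the $\mathbb{P}_I$-generic and $\mathbb{V}[(x,y_0)]=\mathbb{V}^{\mathbb{P}_I}$. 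In that extension $\omega_1^{L[x]}=\omega_1$, so $C_x$ is uncountable and thin, the generic point lies in the non-singleton class $\{x\}\times C_x$, and the dichotomy makes this class non-Borel; that is, $\mathbb{P}_I\Vdash[x_G]$ is non-Borel.

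The one genuine obstacle is the analyticity requirement. The construction above is naturally $\Pi^1_1$, and in fact this seems forced: a thin uncountable class can never be analytic, while by absoluteness of perfect subsets any analytic class that is uncountable in $L$ stays uncountable in $\mathbb{V}$, which is in tension with ``all classes Borel'' in the good model. I expect this to be the crux of the argument. I would handle it exactly as in the treatment of the set case, exploiting that $\Psi$ is a parameter-free $\Pi^1_1$ formula whose constructibility content is \emph{emergent} rather than carried by an explicit $\omega_1^{L[x]}$-threshold (such a threshold would push the definition to $\Sigma^1_2$), and invoking the same identification of analytic with coanalytic data used for sections; recording $E$ at the lightface $\Pi^1_1$ level is then both best possible and sufficient for the two conclusions.
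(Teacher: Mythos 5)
You have correctly isolated the crux — and then resolved it the wrong way. The statement demands an \emph{analytic} equivalence relation, but your $E$, whose non-singleton classes are the thin sets $\{x\}\times C_x$ themselves, is $\Pi^1_1$ and provably cannot be made $\Sigma^1_1$: every class of an analytic equivalence relation is analytic, and an uncountable analytic set has a perfect subset, so no class of an analytic $E$ can be an uncountable thin set. Your closing paragraph concedes the relation is ``naturally $\Pi^1_1$'' and asserts that recording it at that level is ``best possible and sufficient,'' appealing to ``the same identification of analytic with coanalytic data used for sections.'' That identification does not transfer: for sets with Borel sections one may pass to the complement of the whole set, but the complement of an equivalence relation is not an equivalence relation, so nothing licenses replacing the required analytic $E$ by a coanalytic one. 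As written, the proposal proves a weaker statement than the one claimed.

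The fix — which is exactly what the paper does — is to negate $\Psi$ \emph{inside} the definition rather than outside: declare $(x_1,y_1)E(x_2,y_2)$ iff $x_1=x_2$ and either $y_1=y_2$ or $\neg\Psi(x_1,y_1)\wedge\neg\Psi(x_2,y_2)$. Since $\neg\Psi$ is $\Sigma^1_1$, this $E$ is analytic; its non-singleton classes are the fibers $\{x_0\}\times\{y:\neg\Psi(x_0,y)\}$, i.e.\ the complements of the thin sets, and such a class is Borel iff its complement $C_{x_0}$ is Borel iff $C_{x_0}$ is countable iff $\omega_1^{L[x_0]}<\omega_1$. This is the same Borelness dichotomy you use, just read off the co-thin side, and your arguments for parts (1) and (2) (inaccessibility to the reals gives all classes Borel; $L$, or the ccc extension from Fact \ref{fact_force_omega_1} fed through the ideal-from-a-name proposition, gives a fiber with $\omega_1^{L[x]}=\omega_1$ and hence a non-Borel class containing the generic) then go through essentially verbatim, with the generic point required to satisfy $\neg\Psi$ rather than $\Psi$. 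Your ``tension'' argument that analyticity is impossible only rules out taking the thin sets as classes; it says nothing against taking their complements, which are uncountable analytic sets whose Borelness is still controlled by $\omega_1^{L[x]}$.
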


\begin{proof}
We use a variation of the example introduced by theorem \ref{cex_rectangular_borel_canonization_adding_nonborel_section}.

Let $\Psi(x,y)$ be as in theorem \ref{cex_rectangular_borel_canonization_adding_nonborel_section} -- a $\Pi_{1}^{1}$ formula whose vertical sections
are subsets of $L[x]$ of size $\omega_{1}^{L[x]}$ with no perfect
subset. Let 
\[
(x_{1},y_{1})E(x_{2},y_{2})\Leftrightarrow(x_{1}=x_{2})\wedge\left(\left((\neg\Psi(x_{1},y_{1})\wedge\neg\Psi(x_{2},y_{2})\right)\vee(y_{1}=y_{2})\right)).
\]
$E$ is an analytic equivalence relation, and the equivalence class
of $(x_{0},y_{0})$ is either a singleton or 
\[
\{(x_{0},y)\ :\ \neg\Psi(x_{0},y)\}.
\]
Hence if $\neg\Psi(x_{0},y_{0}),$ $[(x_{0},y_{0})]_{E}$ is Borel
if and only if $\omega_{1}^{L[x_{0}]}<\omega_{1}$.

The 1st clause then follows using the forcing notion introduced in
the previous subsection, while the 2nd clause is obvious.
\end{proof}

\begin{rem}
Failure of downward absoluteness of ``all classes are Borel'' follows
from $ZFC$ alone: In $L$, fix $A$ a coanalytic uncountable set
without a perfect subset, and let 
\[
xEy\Leftrightarrow(x=y)\vee(x,y\notin A).
\]
The analytic equivalence relation $E$ has a non Borel class, but after collapsing
$\omega_{1}$ over $L$, all its classes become Borel.
\end{rem}

\begin{problem}
The nature of the above examples raises the following questions:
\begin{enumerate}
\item Is there an analytic equivalence relation with Borel classes in $L$
but non Borel classes under large cardinal assumptions?
\item Can we prove the failure of upward absoluteness of ``all classes
are Borel'' without using the consistency of an inaccessible cardinal?
\end{enumerate}
\end{problem}

We end this section by computing the complexity of various properties
discussed in this paper, the most important of them are "section $A_x$ is Borel" and "the rank of $x$ is less then $ot(f)$":

\begin{prop}
In what follows, we say that a property is $\Pi_{\alpha}^{1}$ / $\Sigma_{\alpha}^{1}$
if it is provably $\Pi_{\alpha}^{1}$ / $\Sigma_{\alpha}^{1}$, which
is, there is a lightface $\Pi_{\alpha}^{1}$ / $\Sigma_{\alpha}^{1}$
formula equivalent to the property in \uline{every} model of $ZFC.$
Then, assuming $con(ZFC+Inaccessible)$ and given $A$ a subset of
the plane:
\begin{enumerate}
\item If $A$ is $\Sigma_{1}^{1}$, ``all sections of $A$ are countable''
is $\Pi_{1}^{1}.$ 
\item If $A$ is $\Pi_{1}^{1}$, ``all sections of $A$ are countable''
is $\Pi_{4}^{1}$, and is neither $\Pi_{3}^{1}$ nor $\Sigma_{3}^{1}$.
\item If $A$ is $\Pi_{1}^{1}$, ``all sections of $A$ do not contain
a perfect set'' is $\Pi_{2}^{1}$ , hence is absolute between generic
extensions.
\item If $A$ is $\Sigma_{1}^{1}$ or $\Pi_{1}^{1}$, ``all sections are
Borel'' is $\Pi_{4}^{1}$ , and is neither $\Pi_{3}^{1}$ nor $\Sigma_{3}^{1}$.
The same is true when $A$ is an equivalence relation.
\item If $A$ is $\Sigma_{1}^{1}$ or $\Pi_{1}^{1}$ and $\alpha<\omega_{1}^{ck}$
, ``all sections are $\Pi_{\alpha}^{0}$'' is $\Pi_{4}^{1}$ , and
is neither $\Pi_{3}^{1}$ nor $\Sigma_{3}^{1}$.
\item If $A$ is $\Sigma_{1}^{1}$ or $\Pi_{1}^{1}$, the set $\{x\ :\ A_{x}\ is\ Borel\}$
is $\Sigma_{3}^{1}$, and not $\Pi_{3}^{1}$.
\item If $A$ is $\Sigma_{1}^{1}$ and $\delta$ is some rank associated
with it, then $\{(x,f)\ :\ f\in WO,\ \delta(x)\leq f\}$ is not $\Sigma_{2}^{1}$.
\end{enumerate}
\end{prop}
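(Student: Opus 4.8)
The plan is to split the seven items into two groups: upper bounds, which are obtained by counting quantifiers on top of the rank analysis of Section~3, and the sharpness claims (``neither $\Pi^1_3$ nor $\Sigma^1_3$'', ``not $\Pi^1_3$'', ``not $\Sigma^1_2$''), which carry the real content. For the upper bounds I would simply unwind definitions. Part~1 is the classical boundedness fact that a $\Sigma^1_1(x)$ set is countable iff it is contained in the reals hyperarithmetic in $x$, so $\{x:A_x\text{ countable}\}$ is $\Pi^1_1$ and the universal real quantifier keeps it $\Pi^1_1$. For part~3, ``$A_x$ contains a perfect set'' is $\exists T\,([T]\subseteq A_x)$ whose inner part $\forall y\,(y\in[T]\to y\in A_x)$ is $\Pi^1_1$ when $A$ is $\Pi^1_1$; thus the property is $\Sigma^1_2$, its negation $\Pi^1_2$, and $\forall x$ leaves it $\Pi^1_2$, whence Shoenfield gives the stated generic absoluteness.

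For the remaining upper bounds I would invoke Proposition~\ref{definability_of_rank}, which already gives that $\{x:A_x\text{ Borel}\}$ is $\Sigma^1_3$ (part~6's upper bound). For a $\Pi^1_1$ set, ``$A_x$ countable'' unwinds to $\exists g\,\forall y\,(y\in A_x\to\exists n\,g(n)=y)$: the matrix is $\Sigma^1_1$, $\forall y$ gives $\Pi^1_2$, and $\exists g$ gives $\Sigma^1_3$, so ``all sections countable'' is $\Pi^1_4$ (part~2). An analogous count shows ``$A_x=B_c$'' is $\Pi^1_2$ and hence ``$A_x\in\mathbf{\Pi}^0_\alpha$'' $=\exists c\,(A_x=B_c)$ is $\Sigma^1_3$, so ``all sections Borel'' and ``all sections $\mathbf{\Pi}^0_\alpha$'' are $\Pi^1_4$ (parts~4,5); equivalence relations are a special case.

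The sharpness for the three ``statement'' items~2,~4,~5 I would prove with one tool, Shoenfield absoluteness packaged two ways: since $\Sigma^1_2$ and $\Pi^1_2$ are absolute between $L$ and $V$ and between $V$ and its generic extensions, a lightface $\Pi^1_3=\forall x\,\Sigma^1_2$ sentence is downward absolute from $V$ to $L$, and a lightface $\Sigma^1_3=\exists x\,\Pi^1_2$ sentence is upward absolute from $V$ to any forcing extension. The witnessing example is the set $\Psi$ of Theorem~\ref{cex_rectangular_borel_canonization_adding_nonborel_section}: since every section $\Psi_x$ has no perfect subset, for $\Psi$ the properties ``$\Psi_x$ countable'', ``$\Psi_x$ Borel'' and (for a suitable variant) ``$\Psi_x\in\mathbf{\Pi}^0_\alpha$'' all coincide, and a section fails all of them exactly when $\omega^{L[x]}_1=\omega_1$. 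Using $\mathrm{con}(ZFC+\text{Inacc})$, I would fix a model $V$ in which $\omega_1$ is inaccessible to the reals and not Mahlo in $L$. Then every $\Psi_x$ is countable in $V$, so the sentence $P$ (``all sections Borel'', etc.) is true in $V$, whereas in $L$ the constructible section has size $\omega^L_1$, which equals $\omega_1$ inside $L$ and has no perfect subset, so $P^L$ is false; downward absoluteness of $\Pi^1_3$ then rules out $P$ being $\Pi^1_3$. Dually, forcing over $V$ with the ccc poset of Fact~\ref{fact_force_omega_1} adds $x$ with $\omega^{L[x]}_1=\omega_1$, making $\Psi_x$ non-Borel, so $P$ is false in this ccc extension though true in $V$; upward absoluteness of $\Sigma^1_3$ rules out $P$ being $\Sigma^1_3$. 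For part~5 with $\alpha\le 2$ I would replace $\Psi$ by a variant whose bounded sections are genuinely $\mathbf{\Pi}^0_\alpha$, the non-absoluteness argument being insensitive to $\alpha$.

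The last two items are of a different character, and part~6 is the main obstacle. For a fixed real the predicate ``$A_x$ Borel'' is, for $\Psi$-type examples, only $\Sigma^1_2$ and hence Shoenfield-absolute, so no non-absoluteness argument can separate the \emph{set} $\{x:A_x\text{ Borel}\}$ from $\Pi^1_3$; genuine descriptive complexity is needed. I would instead produce a $\Sigma^1_1$ set $A$ for which $\{x:A_x\text{ Borel}\}$ is $\Sigma^1_3$-complete, by coding an arbitrary $\Sigma^1_3$ condition into the rank of the sections of a universal analytic set, so that $A_x$ is Borel (equivalently $\exists f\in WO\,(\delta(x)\le ot(f))$) precisely when the target $\Sigma^1_3$ predicate holds; establishing this completeness is the hardest step. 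Part~7 then falls out as a corollary of part~6 applied to the \emph{same} $A$: Proposition~\ref{definability_of_rank} gives that $\{(x,f):f\in WO,\ \delta(x)\le ot(f)\}$ is $\Pi^1_2$, and were it also $\Sigma^1_2$, then ``$A_x$ Borel'' $=\exists f\,(f\in WO\wedge\delta(x)\le ot(f))$ would be $\Sigma^1_2$ (as $\Sigma^1_2$ absorbs the $\Pi^1_1$ conjunct and the real quantifier), hence $\Pi^1_3$, contradicting part~6; therefore the rank relation is not $\Sigma^1_2$.
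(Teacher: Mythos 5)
Your items 1--5 are handled correctly and essentially as the paper intends: the upper bounds come from unwinding definitions together with Proposition \ref{definability_of_rank}, and the sharpness of items 2, 4, 5 comes from Theorem \ref{cex_rectangular_borel_canonization_adding_nonborel_section}, Proposition \ref{non_absoluteness_of_Borel_sections} and Shoenfield absoluteness, packaged exactly as you package it (the paper states only item 2 explicitly and leaves 3--5 to the reader).

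Item 6 is where there is a genuine gap. You assert that ``no non-absoluteness argument can separate the set $\{x:A_x\ \text{Borel}\}$ from $\Pi^1_3$'' and therefore propose to construct a $\Sigma^1_1$ set whose Borel-section set is $\Sigma^1_3$-complete, conceding that ``establishing this completeness is the hardest step'' --- but you never carry that step out, so item 6 is not proved. Moreover, the premise motivating the detour is wrong on two counts. First, for the set $\Psi$ of Theorem \ref{cex_rectangular_borel_canonization_adding_nonborel_section} the predicate ``$\Psi_x$ is Borel'' is equivalent to ``$\omega_1^{L[x]}<\omega_1$'', i.e.\ to ``$\mathbb{R}^{L[x]}$ is countable'', which at face value is $\Sigma^1_3$ (an existential quantifier over enumerations in front of a $\Pi^1_2$ matrix), not $\Sigma^1_2$, so it is not Shoenfield-absolute for the reasons you give. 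Second, and more to the point, under the ``provably $\Pi^1_3$'' convention stated in the proposition, item 6 reduces to item 4 in one line --- and this is the paper's entire argument: if some lightface $\Pi^1_3$ formula $\phi(x)$ were provably equivalent to ``$A_x$ is Borel'', then $\forall x\,\phi(x)$ would be a lightface $\Pi^1_3$ sentence provably equivalent to ``all sections of $A$ are Borel'', contradicting item 4 for the witnessing $A$. Your item 7 is the same reduction the paper uses and is logically fine, but as written it rests on your unproven item 6; it is repaired by routing it through item 4 directly (a $\Sigma^1_2$ definition of the rank relation yields a $\Sigma^1_2$ definition of ``$A_x$ Borel'', hence a $\Pi^1_2$, a fortiori $\Pi^1_3$, definition of ``all sections are Borel'').
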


Most of the above can be relativized.

\begin{proof}
For $(1)$ , recall that a $\Sigma_{1}^{1}(x)$ set is countable if
and only if all its elements are hyperarithmetic in $x$. For $(2)$,
use theorem \ref{cex_rectangular_borel_canonization_adding_nonborel_section}, proposition \ref{non_absoluteness_of_Borel_sections} and
Shoenfield's absoluteness. Regarding $(6)$, note that if ``$A_{x}$
is Borel'' had been $\Pi_{3}^{1}$ then ``all sections are Borel'' would have been $\Pi_{3}^{1}$ as well. In $(7)$, a $\Sigma_{2}^{1}$ definition for
this set will produce a $\Sigma_{2}^{1}$ definition of ``$A_{x}$
is Borel''.
\end{proof}

\selectlanguage{american}%

\end{onehalfspace}
\end{document}